\documentclass{article}

\PassOptionsToPackage{numbers, compress,sort}{natbib}
\usepackage[preprint]{neurips_2026}
\usepackage[utf8]{inputenc}
\usepackage[T1]{fontenc}
\usepackage{hyperref}
\usepackage{hyperref}
\usepackage{url}
\usepackage{booktabs}
\usepackage{microtype}

\usepackage{amsmath, amssymb, amsfonts, caption, enumitem, graphicx, mathtools, nicefrac, comment, subcaption, wrapfig} 

\usepackage[dvipsnames]{xcolor}
\hypersetup{colorlinks, linkcolor={red!50!black}, citecolor={blue!50!black}, urlcolor={ForestGreen}}

\usepackage{algorithm, algorithmic}

\usepackage{mathabx}
\usepackage{tabularx}

\allowdisplaybreaks
\usepackage[capitalize]{cleveref}
\crefformat{equation}{(#2#1#3)}
\crefformat{condition}{Condition #2#1#3}
\crefrangeformat{equation}{(#3#1#4)--(#5#2#6)}
\crefrangeformat{condition}{Conditions #3#1#4--#5#2#6}
\crefrangeformat{lemma}{Lemmas #3#1#4--#5#2#6}
\crefrangeformat{proposition}{Propositions #3#1#4--#5#2#6}
\crefrangeformat{algorithm}{Algorithms #3#1#4--#5#2#6}
\crefrangeformat{section}{Sections #3#1#4--#5#2#6}
\crefrangeformat{assumption}{Assumptions #3#1#4--#5#2#6}
\crefformat{assumption}{Assumption #2#1#3}

\usepackage{amsthm}
\newtheorem{theorem}{Theorem}
\newtheorem{proposition}{Proposition}
\newtheorem{lemma}{Lemma}

\newtheorem{assumption}{Assumption}

\def\beq{\begin{equation}}
\def\eeq{\end{equation}}

\def\fnote#1{\footnote}

\newcommand{\x}{{\mathrm{x}}}
\newcommand{\y}{{\mathrm{y}}}

\def\E{{\mathbb{E}}}

\def\R{{\mathbb{R}}}

\def\cA{{\cal A}}

\def\cC{{\cal C}}
\def\cD{{\cal D}}

\def\cF{{\cal F}}

\def\cO{{\cal O}}

\def\cU{{\cal U}}

\def\cX{{\cal X}}
\def\cY{{\cal Y}}

\newcommand{\bbR}{\mathbb{R}}

\newcommand{\CX}{\mathcal{X}}
\newcommand{\CY}{\mathcal{Y}}

\DeclareMathOperator*{\argmin}{arg\,min}
\DeclareMathOperator*{\argmax}{arg\,max}

\DeclareMathOperator{\Tr}{Tr}

\DeclareMathOperator{\conv}{conv}

\DeclareMathOperator{\one}{\mathbf{1}}

\DeclarePairedDelimiter\abs{\lvert}{\rvert}%

\makeatletter
\let\oldabs\abs
\def\abs{\@ifstar{\oldabs}{\oldabs*}}
\makeatother

\DeclareMathOperator{\PO}{PO}
\DeclareMathOperator{\LMO}{LMO}
\newcommand{\Gap}{\mathsf{Gap}}
\newcommand{\gap}{\mathsf{gap}}
\newcommand{\op}{\mathrm{op}}

\newcommand{\myoverline}[1]{\mkern 1.75mu\overline{\mkern-1.75mu#1\mkern-1.75mu}\mkern 1.75mu}

\RequirePackage{silence}
\title{Projection-Free Algorithms for Nonsmooth \\ Stochastic Convex-Concave Saddle-Point Problems}

\author{
  Khanh-Hung Giang-Tran \\
  Cornell University \\ \texttt{tg452@cornell.edu}
  \And
  Soroosh Shafiee \\
  Cornell University \\
  \texttt{shafiee@cornell.edu} 
}

\begin{document}

\maketitle

\begin{abstract}
    We study nonsmooth convex-concave saddle-point problems over compact convex sets, assuming access to stochastic subgradients of the payoff function. We develop \emph{single-loop projection-free} algorithms that use linear minimization oracles over the primal and dual domains. Unlike prior projection-free approaches that rely on smoothing, our methods are purely subgradient-based and handle nonsmoothness directly. This design makes the framework modular: when a linear minimization oracle is not used over a specific domain, the corresponding update can be replaced by a standard projection oracle without changing the single-loop structure. Thus, our framework covers both fully projection-free and hybrid oracle configurations. We prove \emph{anytime} strong saddle gap guarantees under standard unbiased stochastic oracle assumptions with bounded variance. Our algorithms achieve an $O(\epsilon^{-2})$ iteration complexity, matching projected stochastic subgradient methods. We also provide matching lower bounds for the corresponding oracle models, establishing minimax optimality. Our results show that, for nonsmooth stochastic minimax problems, the computational advantages of linear minimization oracles need not come at the expense of statistical or oracle efficiency.
    %We study nonsmooth convex-concave saddle-point problems over compact convex sets, assuming access to stochastic subgradients of the payoff function. To solve these problems, we develop \emph{single-loop projection-free} algorithms that utilize linear minimization oracles over the primal and dual domains. Unlike prior projection-free approaches for nonsmooth optimization that rely on smoothing techniques, our methods are purely subgradient-based and handle nonsmoothness directly. This subgradient-based design makes the framework modular. Specifically, when a linear minimization oracle is not employed over a specific domain, the corresponding update can be replaced by a standard projection oracle without changing the overall single-loop structure. As a result, our framework covers both fully projection-free and hybrid oracle configurations. We prove \emph{anytime} strong saddle-gap guarantees under standard unbiased stochastic oracle assumptions with bounded second moments. Our algorithms achieve an $O(\epsilon^{-2})$ iteration complexity, matching the optimal rate of projected stochastic subgradient methods. We complement these upper bounds with matching lower bounds for the corresponding oracle models, thereby establishing the minimax optimality of our rates. Our results show that, for nonsmooth stochastic minimax problems, the computational advantages of linear minimization oracles need not come at the expense of statistical or oracle efficiency.
\end{abstract}

\section{Introduction}
\label{sec:introduction}

Projected subgradient descent is a fundamental approach for constrained nonsmooth convex optimization \citep{shor1985minimization,bertsekas1999nonlinear}. When projections onto the feasible set are computationally tractable, these methods are minimax optimal \citep{nemirovski1983problem,nesterov2013introductory}, reaching an $\epsilon$-approximate solution in $O(\epsilon^{-2})$ iterations. This optimality extends to the stochastic regime, where projected stochastic subgradient descent maintains the same iteration complexity \citep{robbins1951stochastic,nemirovski2009robust}. However, in many large-scale applications, the projection step becomes the primary computational bottleneck, necessitating the use of projection-free alternatives such as the Frank-Wolfe (FW) algorithm \citep{jaggi2013revisiting,braun2022conditional}.
While projection-free methods are well-established for smooth problems, their application to the nonsmooth setting is a more recent development. Notably, \citet{thekumparampil2020optimal,thekumparampil2020projection} and \citet{asgari2022projection,asgari2024nonsmooth} have introduced nonsmooth FW variants that achieve an iteration complexity of $O(\epsilon^{-2})$, matching the minimax optimal rate~\citep{lan2013complexity}.

This work addresses the nonsmooth convex-concave saddle-point problem of the form
\begin{equation}
\label{eq:minimax}
    \min_{x \in \cX} \ \max_{y \in \cY} \ f(x,y),
\end{equation}
where we assume access only to unbiased stochastic subgradients of $f$.
For this class of problems, projected stochastic subgradient descent-ascent is known to achieve the optimal convergence rate of $O(\epsilon^{-2})$ \citep{nedic2009subgradient,nemirovski2009robust}.
However, the landscape of projection-free methods for nonsmooth saddle-point problems is significantly less understood. This paper provide a comprehensive analysis of \emph{single-loop}, linear minimization oracle (LMO)-based algorithms for~\eqref{eq:minimax}, establishing anytime convergence guarantees. Our proposed framework is flexible, utilizing an LMO over $\cX$, $\cY$, or both. In instances where an LMO is not employed for a specific domain, we utilize a standard projection oracle (PO).

\subsection{Contributions}
Our main contributions are as follows. 
\begin{enumerate}[label=$\diamond$,leftmargin=2em]
    \item Building upon the subgradient-based framework introduced in \citep{asgari2022projection,asgari2024nonsmooth}, we propose a novel algorithmic update for saddle-point problems that employs a non-trivial mechanism to derive the LMO direction. This approach directly addresses objective nonsmoothness \emph{without} requiring smoothing techniques. Furthermore, the update rule is designed for modularity, allowing it to be seamlessly integrated with a standard PO when an LMO is not employed for a specific variable. This allows us to rigorously analyze three distinct oracle configurations: LMO-LMO (fully projection-free), LMO-PO, and PO-LMO (hybrid). This flexibility enables practitioners to select the most efficient oracle for each constraint set $(\cX, \cY)$.
    
    \item We establish \emph{anytime} convergence guarantees for all proposed algorithmic variants. This is of significant practical importance as it eliminates the need to pre-specify a target accuracy~$\epsilon$ or a total iteration horizon $T$ to tune parameters. We prove that our algorithms achieve the optimal $O(\epsilon^{-2})$ iteration complexity, matching the theoretical performance of standard projected stochastic subgradient descent-ascent methods \citep{nesterov2013introductory, nemirovski1983problem, guzman2015lower}.

    \item We complement our upper bounds with matching lower bounds. In particular, we prove an $\Omega(\epsilon^{-2})$ iteration complexity lower bound for any algorithm based on the LMO-LMO oracle model, as well as for any method operating under the LMO-PO oracle model. By strong duality, the same lower bound immediately extends to the PO-LMO setting. Hence, the rates achieved by our algorithms are \emph{minimax optimal} in all three oracle regimes.

    \item Our results substantially strengthen the current theory of projection-free minimax optimization. In particular, even in the \emph{smooth deterministic single-loop} setting, previous projection-free methods were only known to achieve slower rates \citep{boroun2023projection,giang2026projection}. By contrast, we obtain the optimal $O(\epsilon^{-2})$ rate in the more general \emph{nonsmooth stochastic} setting. This closes the convergence-rate gap between projection-free and projection-based methods for minimax optimization, showing that the computational advantages of LMO-based updates need not come at the expense of statistical or oracle complexity.
\end{enumerate}

\subsection{Related Works}

\paragraph{Nonsmooth Frank-Wolfe Algorithms.} 
Constrained minimax problems of the form \eqref{eq:minimax} can be viewed as instances of nonsmooth convex optimization. This is because the primal function $p(x) := \max_{y \in \cY} f(x, y)$ is generally nonsmooth, even when $f$ is smooth. The classic FW algorithm, designed for smooth objectives, cannot be directly applied here. 
\citet{white1993extension} first addressed nonsmooth optimization using an LMO. Since then, only a few studies have followed, and earlier works often rely on strict assumptions \citep{ravi2019deterministic,cheung2017nonsmooth}. More recently, \citet{hazan2012projection} introduced the Online Frank-Wolfe algorithm, obtaining an $O(T^{-3/4})$ regret bound for general online convex optimization. This translates to an $O(\epsilon^{-4})$ rate for the offline nonsmooth convex setting. Under an additional smoothness assumption, \citet{hazan2020faster} improved this complexity to $O(\epsilon^{-3})$ by using a Follow-the-Perturbed-Leader approach as a smoothing technique. \citet{lan2013complexity} used randomized smoothing in the deterministic setting to achieve an $O(\epsilon^{-2})$ rate.

A complementary approach addresses nonsmoothness through the composite structure of the objective. \citet{lan2016conditional} and \citet{lan2016gradient} proposed the conditional gradient sliding algorithm, which decouples gradient and LMO computations. Using this sliding framework with Moreau-Yosida smoothing, \citet{thekumparampil2020projection} proposed a \emph{double-loop} algorithm that achieves $\widetilde O(\epsilon^{-2})$ LMO and subgradient oracle calls. \citet{thekumparampil2020optimal} then extended this framework to a \emph{single-loop} procedure for nonsmooth minimization. However, both algorithms rely on smoothing techniques that require parameters to be selected a priori. Consequently, these algorithms are not anytime procedures.
Rather than relying on smoothing, \citet{asgari2022projection} recently proposed a subgradient-based projection-free algorithm for nonsmooth convex optimization. This method attains an $O(\epsilon^{-2})$ convergence rate using a single LMO call per iteration, serving as a projection-free counterpart to projected subgradient descent. This was extended by \citet{asgari2024nonsmooth} to handle general convex functional inequality constraints while preserving the $O(\epsilon^{-2})$ complexity, consistent with existing lower bounds \citep{lan2013complexity}. A key algorithmic feature we share with the nonsmooth literature \citep{lan2016conditional,tao2019strength,thekumparampil2020optimal,thekumparampil2020projection,asgari2022projection,asgari2024nonsmooth,lu2023projection,grimmer2024radialI,grimmer2024radialII} is the use of subgradients \emph{outside the feasible set}. Besides, similar to the approach in \citep{asgari2022projection,asgari2024nonsmooth}, our analysis is subgradient-based rather than dependent on smoothing techniques.

\paragraph{LMO-Based Methods for Minimax Problems.}
Earlier works focused on special primal-dual Lagrangian-based formulations and established $O(\epsilon^{-2})$ rates in the smooth setting \citep{harchaoui2015conditional,yurtsever2019conditional,locatello2019stochastic,lan2021conditional}.
Subsequent work extended this to smooth convex-strongly concave problems via stochastic \emph{double-loop} methods \citep{chen2020efficient} and to smooth convex-concave problems via \emph{multi-loop} Follow-the-Perturbed-Leader approach \citep{suggala2020follow}, both recovering the $O(\epsilon^{-2})$ rate.
\citet{gidel2017frank} pioneered the study of \emph{single-loop} algorithms with linear convergence for smooth strongly convex-strongly concave problems with interior solutions, as well as for smooth convex-concave problems over strongly convex sets with lower-bounded gradients or polytope feasible set.
\citet{kolmogorov2021one} developed a one-sided method for the bilinear case, achieving $O(\epsilon^{-1})$ rate.

A separate line of work considers the smooth nonconvex-concave setting, whose guarantees translate to the convex-concave case but at inferior rates since convexity is not exploited.
\citet{nouiehed2019solving} proposed a multi-loop one-sided projection-free method achieving an $O(\epsilon^{-3.5})$ iterations. 
\citet{boroun2023projection} and \citet{giang2026projection} subsequently developed single-loop projection-free analyses for nonconvex-concave problems under various conditions, with rates ranging from $O(\epsilon^{-6})$ to $O(\epsilon^{-2})$ depending on the setting.
To the best of our knowledge, this is the first work to address nonsmooth stochastic convex-concave minimax problems while achieving the optimal $O(\epsilon^{-2})$ rate.

\paragraph{LMO-Based Methods for Variational Inequalities.}
Smooth convex-concave minimax problems are an instance of monotone variational inequality problems.
\citet{hammond1984solving} showed that a single-loop FW algorithm with an open-loop stepsize converges \emph{asymptotically} for monotone variational inequalities over strongly convex sets, and this was recently extended to general convex compact sets in \citep{hough2026asymptotic}.
While several works have established non-asymptotic complexity bounds via \emph{nested multi-loop} schemes, such as semi-proximal mirror-prox methods \citep{he2015semi} and extra gradient methods \citep{baghbadorani2025frank}, single-loop algorithms have required either specific geometries or stronger oracles to achieve competitive rates.
Specifically, \citet{cox2014dual} and \citet{juditsky2016solving} proved an $O(\epsilon^{-2})$ ergodic convergence rate under the accessibility of Fenchel-type representation of the payoff function and its corresponding first-order information. 
Moreover, \citet{chen2024last} proved an $O(\epsilon^{-2})$ last-iterate convergence rate for a single-loop \emph{generalized} FW method, but their approach relies on a stronger oracle rather than a pure LMO.
In contrast, this paper considers nonsmooth stochastic minimax problems and develops single-loop purely LMO-based algorithms, enjoying the same rate.

\paragraph{PO-Based Methods.} 
Many algorithms exist for solving monotone variational inequalities and smooth convex-concave saddle-point problems of the form~\eqref{eq:minimax}.
In the deterministic setting, the extragradient method \citep{korpelevich1976extragradient}, Popov's algorithm \citep{popov1980modification}, and Tseng's forward-backward-forward scheme \citep{tseng2000modified} all achieve an $O(\epsilon^{-1}$) rate, as does the mirror-prox algorithm of \citet{nemirovski2004prox}.
This rate is optimal \citep{ouyang2021lower} and is also attained by modern variants including reflected gradient methods \citep{malitsky2015projected,malitsky2020forward}, optimistic gradient descent-ascent \citep{daskalakis2018training}, dual extrapolation \citep{nesterov2007dual}, and adaptive methods \citep{malitsky2020golden,alacaoglu2023beyond}.
For structured problems such as Lagrangian-based payoff functions for affinely constrained problems, specialized methods can exploit additional structure to obtain faster rates \citep{chambolle2011first,nesterov2005smooth}.
Moving to the nonsmooth setting, \citet{nedic2009subgradient} established an $O(\epsilon^{-2})$ rate for projected subgradient descent-ascent, and \citet{nemirovski2009robust} achieved the same rate under additional stochasticity.
We develop algorithms based on various combinations of LMO and PO for nonsmooth stochastic convex-concave problems and establish the optimal $O(\epsilon^{-2})$ rate under this more challenging setting. 

\paragraph{Lower Bound Complexities.} Lower bounds for first-order methods have been extensively studied in the optimization literature. For nonsmooth convex minimization, the $O(\epsilon^{-2})$ complexity of subgradient methods is known to be optimal \citep{nesterov2013introductory, nemirovski1983problem, guzman2015lower}. For smooth convex minimization, the matching lower bound for gradient-based methods was established by \citet{nesterov1983method}, while \citet{lan2013complexity} derived analogous lower bounds for conditional gradient methods. 
More recently, lower bounds under additional strong convexity of the feasible set were studied by \citet{halbey2026lower} and \citet{grimmer2026lower}.
In the stochastic setting, optimal lower bounds for convex and nonconvex problems have been established in \citep{agarwal2009information, foster2019complexity}. For our minimax optimization problem, we show that our $O(\epsilon^{-2})$ upper bounds match their corresponding lower bounds, establishing that these rates are minimax optimal.

\subsection{Notation and Outline}
For a compact convex set $\cU$, the LMO at a given direction $d$ and the PO at a point $u$ are defined as
\begin{align*}
    \LMO_{\mathcal{U}}(d) \in \argmin_{v \in \mathcal{U}} ~ d^\top v 
    \qquad \text{and} \qquad 
    \PO_{\cU}(u) := \argmin_{v \in \cU} \| u - v \|_2^2.
\end{align*}
Convergence results in the main text are reported using standard big-$O$ and $\widetilde O$ notations, where the latter suppresses logarithmic factors for clarity. Detailed proofs, including explicit convergence constants, as well as implementation details are provided in the Appendix.
Section~\ref{sec:algorithms} presents our proposed algorithms and derives the key recursions underlying each variant. Section~\ref{sec:convergence} establishes their convergence guarantees, and Section~\ref{sec:lower-bounds} provides lower bounds for any algorithm relying on combinations of LMO and PO updates. Section~\ref{sec:numerical} reports numerical experiments comparing our methods against state-of-the-art baselines.

\section{Proposed Algorithms}
\label{sec:algorithms}

We begin by stating our assumptions on the problem structure and the stochastic subgradient oracle.
\medskip
\begin{assumption}
\label{ass:extended}
    The payoff function $f: \bbR^n \times \bbR^m \to \bbR$ and the feasible sets $\cX \subset \R^n$ and $\cY \subset \R^m$ satisfy the followings. 
    \begin{enumerate}[label=(\roman*),leftmargin=2em]
        \item\label{ass:X-compact-extend} The domain $\CX$ is a compact convex set and there exists another convex compact set $\myoverline{\CX} \supseteq \CX$ with $$R_{\cX} := \max_{x \in \CX, \bar{x} \in \myoverline{\CX}} \|x-\bar{x}\|_2>0.$$ \\[-2.5em]
        \item\label{ass:Y-compact-extend} The domain $\CY$ is a compact convex set and there exists another convex compact set $\myoverline{\CY} \supseteq \CY$ with $$R_{\cY} := \max_{y \in \CY, \bar{y} \in \myoverline{\CY}} \|y-\bar{y}\|_2>0.$$ \\[-2.5em]
        
        \item \label{ass:fx-convex-extend} The function $f_y(\cdot) := f(\cdot,y)$ is convex on $\myoverline{\CX}$.

        \item \label{ass:fy-convex-extend} The function $(-f)_x(\cdot) := -f(x,\cdot)$ is convex on $\myoverline{\CY}$.
    \end{enumerate}
\end{assumption} 

\begin{assumption}\label{ass:stochastic-oracle}
    There exists a stochastic oracle $\cO_f$ such that given any $(x,y) \in  \myoverline{\CX}\times  \myoverline{\CY}$ and $(g^\x,g^\y) = \cO_f(x,y)$, it holds that
    \begin{enumerate}[label=(\roman*),leftmargin=2em] 
        \item\label{ass:unbiased} $\E[(g^\x, g^\y)] \in \partial f_y(x) \times \partial (-f)_x(y) $.
         \item\label{ass:bounded-variance} There exist finite $G_\x,G_\y \!>\! 0$ independent of $(x,y)$ such that $\E[\|g^\x\|_2^2] \!\leq\! G_\x^2$ and $\E[\|g^\y\|_2^2] \!\leq\! G_\y^2$.
    \end{enumerate}
\end{assumption} 

\begin{table*}[!b]
\centering
\begin{minipage}[t]{0.33\textwidth}
\vspace{-1.8em}
\begin{algorithm}[H]
    \small
    \caption{\texttt{Nonsmooth LMO-LMO}}
    \label{alg:lmo-lmo}
    \begin{algorithmic}
        \REQUIRE Parameters $\alpha_t, \beta_t, \eta_t, \tau_t \!>\! 0$ \\[1ex]
        \hspace{-1.25em} \textbf{Initialize:} $v_1 \!=\! x_1 \!\in\! \cX$, $\bar x_0 \!=\! \lambda_1 \!=\! 0$ \\
        \hspace{3.1em} $u_1 \!=\! y_1 \!\in\! \cY$, $\bar y_0 \!=\! \mu_1 \!=\! 0$ \vspace{0.5ex}
        \FOR{$t = 1, \dots, T$} \vspace{0.5ex}
            \STATE $(g^\x_t, g^\y_t) = \mathcal{O}_f(v_t, u_t)$ \vspace{0.5ex}
            \STATE $\bar x_t = \frac{t-1}{t} \bar x_{t-1} + \frac{1}{t} x_t $
            \STATE $v_{t+1} = \PO_{\myoverline{\CX}}\bigl(v_t - \frac{\lambda_t + g^\x_t}{\alpha_t}\bigr)$ \vspace{0.5ex}
            \STATE $x_{t+1} = \LMO_{\mathcal{X}}(-\lambda_t)$ \vspace{0.5ex}
            \STATE $\lambda_{t+1} \!=\! \eta_{t\!+\!1} \big(\tfrac{\lambda_t}{\eta_t} \!+\! v_{t\!+\!1} \!-\! x_{t\!+\!1} \big)$ \vspace{0.5ex}
            \STATE $\bar y_t = \frac{t-1}{t} \bar y_{t-1} + \frac{1}{t} y_t $
            \STATE $u_{t+1} = \PO_{\myoverline{\CY}}\bigl(u_t - \frac{\mu_t + g^\y_t}{\beta_t}\bigr)$ \vspace{0.5ex}
            \STATE $y_{t+1} = \LMO_{\mathcal{Y}}(-\mu_t)$ \vspace{0.5ex}
            \STATE $\mu_{t+1} \!=\! \tau_{t\!+\!1} \big(\tfrac{\mu_t}{\tau_t} \!+\! u_{t\!+\!1} \!-\! y_{t\!+\!1} \big)$ \vspace{0.5ex}
        \ENDFOR
        \vspace{0.5ex}
        \ENSURE $\bar x_T, \bar y_T$
    \end{algorithmic}
\end{algorithm}
\end{minipage}
\hfill
\begin{minipage}[t]{0.33\textwidth}
\vspace{-1.8em}
\begin{algorithm}[H]
    \small
    \caption{\texttt{Nonsmooth LMO-PO}}
    \label{alg:lmo-po}
    \begin{algorithmic}
        \REQUIRE Parameters $\alpha_t, \gamma_t, \eta_t > 0$ \\[1ex]
        \hspace{-1.25em} \textbf{Initialize:} $v_1 \!=\! x_1 \!\in\! \cX$, $\bar x_0 \!=\! \lambda_1 \!=\! 0$ \\
        \hspace{3.1em} $y_1 \!\in\! \cY$, $\bar y_0 \!=\! 0$ \vspace{0.5ex}
        \FOR{$t = 1, \dots, T$} \vspace{0.5ex}
            \STATE $(g^\x_t, g^\y_t) = \mathcal{O}_f(v_t, u_t)$ \vspace{0.5ex}
            \STATE $\bar x_t = \frac{t-1}{t} \bar x_{t-1} + \frac{1}{t} x_t $
            \STATE $v_{t+1} = \PO_{\myoverline{\CX}}\bigl(v_t - \frac{\lambda_t + g^\x_t}{\alpha_t}\bigr)$ \vspace{0.5ex}
            \STATE $x_{t+1} = \LMO_{\mathcal{X}}(-\lambda_t)$ \vspace{0.5ex}
            \STATE $\lambda_{t+1} \!=\! \eta_{t\!+\!1} \big(\tfrac{\lambda_t}{\eta_t} \!+\! v_{t\!+\!1} \!-\! x_{t\!+\!1} \big)$ \vspace{0.5ex}
            \STATE $\bar y_t = \frac{t-1}{t} \bar y_{t-1} + \frac{1}{t} y_t $ \vspace{3.5em}
            \STATE $y_{t+1} = \PO_{\mathcal{Y}}(y_t - \gamma_t g^\y_t)$ \vspace{0.5ex}
        \ENDFOR
        \vspace{0.5ex}
        \ENSURE $\bar x_T, \bar y_T$
    \end{algorithmic}
\end{algorithm}
\end{minipage}
\hfill
\begin{minipage}[t]{0.325\textwidth}
\vspace{-1.8em}
\begin{algorithm}[H]
    \small
    \caption{\texttt{Nonsmooth PO-LMO}}
    \label{alg:po-lmo}
    \begin{algorithmic}
        \REQUIRE Parameters $\rho_t, \beta_t, \tau_t > 0$ \\[1ex]
        \STATE  \textbf{Initialize:} $x_1 \in \cX$ \\
        \hspace{4.15em} $u_1 = y_1 \in \cY$, $\mu_0 \!=\! 0$ \vspace{0.5ex}
        \FOR{$t = 1, \dots, T$} \vspace{0.5ex}
            \STATE $(g^\x_t, g^\y_t) = \mathcal{O}_f(x_t, u_t)$ \vspace{0.5ex}
            \STATE $\bar x_t = \frac{t-1}{t} \bar x_{t-1} + \frac{1}{t} x_t $ \vspace{3.7em}
            \STATE $x_{t+1} = \PO_{\mathcal{X}}(x_t - \rho_t g^\x_t)$ \vspace{0.5ex}
            \STATE $\bar y_t = \frac{t-1}{t} \bar y_{t-1} + \frac{1}{t} y_t $
            \STATE $u_{t+1} = \PO_{\myoverline{\CY}}\bigl(u_t - \frac{\mu_t + g^\y_t}{\beta_t}\bigr)$ \vspace{0.5ex}
            \STATE $y_{t+1} = \LMO_{\mathcal{Y}}(-\mu_t)$ \vspace{0.5ex}
            \STATE $\mu_{t+1} \!=\! \tau_{t\!+\!1} \big(\tfrac{\mu_t}{\tau_t} \!+\! u_{t\!+\!1} \!-\! y_{t\!+\!1} \big)$ \vspace{0.5ex}
        \ENDFOR
        \vspace{0.5ex}
        \ENSURE $\bar x_T, \bar y_T$
    \end{algorithmic}
\end{algorithm}
\end{minipage}
\end{table*}

Assumption~\ref{ass:stochastic-oracle} is standard in stochastic programming where $f(x,y) := \E_{z}[F(x,y,z)]$, in which the oracle $\cO_f$ can be implemented by sampling $z$ and returning $(g^\x, g^\y) \in \partial_\x F(x,y,z) \times \partial_\y (-F(x,y,z))$. The validity of this construction follows from \cref{ass:extended}, which ensures the interchange of expectation and subdifferentiation \citep[Section~3.2]{nemirovski2009robust}.

We now describe our three proposed algorithms, summarized in \cref{alg:lmo-lmo,alg:lmo-po,alg:po-lmo}.
All three share a common subroutine for computing LMO-based directions, which we describe for the primal variable~$x$; the dual counterpart is symmetric.

The method maintains a feasible point $x_t \in \cX$ and an auxiliary point $v_t \in \myoverline{\CX}$, where $\myoverline{\CX}$ is a simple enclosing set of $\cX$. This set is chosen so that projection onto $\myoverline{\CX}$ is \emph{computationally cheap}; typical choices include boxes and Euclidean balls.
At iteration $t$, the algorithm uses the current accumulated direction $\lambda_t$ together with the stochastic subgradient $g^\x_t$ to update the auxiliary point $v_{t+1}$.
The next feasible point is then obtained through a linear minimization oracle over the original domain $x_{t+1}=\LMO_{\cX}(-\lambda_t)$.
Finally, $\lambda_{t+1}$ is updated from the discrepancy $v_{t+1}-x_{t+1}$, so that $\lambda_t$ accumulates information about the mismatch between the auxiliary and feasible sequences. Since the algorithm avoids projection onto the potentially complicated feasible set $\cX$ and instead relies on an LMO over $\cX$, with only a cheap projection onto the simple enclosing set $\myoverline{\CX}$, we refer to it as \emph{projection-free}.

Algorithm~\ref{alg:lmo-lmo} applies this procedure to both primal and dual variables.
Algorithms~\ref{alg:lmo-po} and~\ref{alg:po-lmo} are hybrid variants.
That is, Algorithm~\ref{alg:lmo-po} uses the LMO-based update for $x$ and a standard projected subgradient step for~$y$, while Algorithm~\ref{alg:po-lmo} does the reverse. All three algorithms output the running average $\bar x_T := \frac{1}{T} \sum_{t \in [T]} x_t$ and $\bar y_T := \frac{1}{T} \sum_{t \in [T]} y_t$ as the approximate solution, which is standard practice for nonsmooth stochastic methods to ensure convergence of gap functions.

We assess the quality of the output $(\bar x_T, \bar y_T)$ via the \emph{strong saddle gap}, defined as
\begin{align*}
    \Gap(\bar x_T, \bar y_T) := \E \left[\max_{y \in \cY} f(\bar{x}_T, y) - \min_{x \in \cX} f(x, \bar{y}_T) \right].
\end{align*}
By Jensen's inequality, the \emph{weak saddle gap}, defined as
\begin{align*}
    \gap(\bar x_T, \bar y_T) := \max_{y \in \cY} \E[f(\bar{x}_T, y)] - \min_{x \in \cX} \E[f(x, \bar{y}_T)],
\end{align*}
satisfies
\begin{align*}
    \gap(\bar x_T, \bar y_T) \leq \Gap(\bar x_T, \bar y_T).
\end{align*}
Therefore, all subsequent guarantees carry over to the weak saddle gap as well, which is the metric used in~\citep{nemirovski2009robust}.
We also note that the strong saddle gap admits a natural generalization to variational inequalities, as discussed in \citep{juditsky2011solving}. 

The following proposition isolates the key recursive inequalities underlying the LMO-based updates, which serve as the common building block in the convergence analyses of~\cref{alg:lmo-lmo,alg:lmo-po,alg:po-lmo}. 
\medskip
\begin{proposition}
\label{prop:generic-lmo}
    Suppose Assumptions~\ref{ass:extended} and \ref{ass:stochastic-oracle} hold.
    \begin{enumerate}[label=(\roman*),leftmargin=2em]
        \item Let $\{v_t,x_t,\lambda_t\}_{t \geq 1}$ be the primal sequences generated by Algorithms~\ref{alg:lmo-lmo} or \ref{alg:lmo-po} with parameters satisfying $\alpha_t \leq \alpha_{t+1}$ and $\eta_t \geq \eta_{t+1}$ for all $t \geq 1$. 
        Define the dual evaluation sequence $\{z_t\}_{t \geq 1} = \{ u_t \}_{t \geq 1}$ for \cref{alg:lmo-lmo} and $\{z_t\}_{t \geq 1} = \{ y_t \}_{t \geq 1}$ for \cref{alg:lmo-po}.
        Then, for every $T \geq 2$, we have 
        \begin{equation}
        \label{eq:generic-primal-lmo}
        \begin{aligned}
            &\E\left[\frac{\eta_{T-1}}{2} \left\|\frac{\lambda_T}{\eta_T}\right\|_2^2 + \max_{x \in \cX} \; \sum_{t = 1}^{T} \big( f(v_t,z_t) - f(x,z_t) \big) \right]  \\
            &\leq 2\left(\alpha_{T-1} +\sum_{t = 1}^{T-1}\eta_t\right) R_{\cX}^2+\frac{G_\x^2}{2\alpha_{T-1}}+\sum_{t = 1}^{T-1}\frac{G_\x^2}{2 \alpha_t}+ 2G_\x R_{\cX} T^{1/2}. 
        \end{aligned}  
        \end{equation}
    
        \item Let $\{u_t,y_t,\mu_t\}_{t \geq 1}$ be the dual sequences generated by Algorithms~\ref{alg:lmo-lmo} or \ref{alg:po-lmo} with parameters satisfying $\beta_t \leq \beta_{t+1}$ and $\tau_t \geq \tau_{t+1}$ for all $t \geq 1$.
        Define the primal evaluation sequence $\{w_t\}_{t \geq 1} = \{ v_t \}_{t \geq 1}$ for \cref{alg:lmo-lmo} and $\{w_t\}_{t \geq 1} = \{ x_t \}_{t \geq 1}$ for \cref{alg:po-lmo}.
        Then, for every $T \geq 2$, we have 
        \begin{equation}
        \label{eq:generic-dual-lmo}
        \begin{aligned}
            &\E\left[\frac{\tau_{T-1}}{2} \left\|\frac{\mu_T}{\tau_T}\right\|_2^2 + \max_{y \in \cY} \sum_{t = 1}^{T}  \big( f(w_t, y) - f(w_t, u_t) \big) \right]\\
            &\leq 2\left(\beta_{T-1} +\sum_{t = 1}^{T-1}\tau_t\right) R_{\cY}^2+\frac{G_\y^2}{2\beta_{T-1}}+\sum_{t = 1}^{T-1}\frac{G_\y^2}{2 \beta_t}+ 2G_\y R_{\cY}T^{1/2}.
        \end{aligned}
        \end{equation}
    \end{enumerate}
\end{proposition}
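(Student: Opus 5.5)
The plan is to prove part (i) by a per-iteration analysis of the $v$-update, a telescoping argument for the accumulated direction $\lambda_t$, and a martingale bound for the stochastic error. Part (ii) then follows by the symmetric argument applied to $-f$, with $(u_t,y_t,\mu_t,\beta_t,\tau_t)$ in place of $(v_t,x_t,\lambda_t,\alpha_t,\eta_t)$. Write $s_t := \lambda_t/\eta_t$. The $\lambda$-update reads $s_{t+1} = s_t + v_{t+1}-x_{t+1}$. Since $\lambda_1 = 0$ and $v_1=x_1$, this gives $s_1=0$ and $s_t=\sum_{k\le t}(v_k-x_k)$.

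\emph{Step 1 (projection step).} For $t\le T-1$ and any $x\in\cX\subseteq\myoverline{\CX}$, apply the three-point inequality for $v_{t+1}=\PO_{\myoverline{\CX}}(v_t-(\lambda_t+g^\x_t)/\alpha_t)$:
\[
\langle \lambda_t+g^\x_t, v_{t+1}-x\rangle \le \tfrac{\alpha_t}{2}\bigl(\|v_t-x\|_2^2-\|v_{t+1}-x\|_2^2-\|v_{t+1}-v_t\|_2^2\bigr).
\]
Add $\langle g^\x_t, v_t-v_{t+1}\rangle \le \|g^\x_t\|_2^2/(2\alpha_t)+\tfrac{\alpha_t}{2}\|v_t-v_{t+1}\|_2^2$. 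This yields
\[
\langle g^\x_t, v_t-x\rangle+\langle\lambda_t, v_{t+1}-x\rangle \le \tfrac{\alpha_t}{2}\bigl(\|v_t-x\|_2^2-\|v_{t+1}-x\|_2^2\bigr)+\tfrac{\|g^\x_t\|_2^2}{2\alpha_t}.
\]
Summing over $t\le T-1$ with $\alpha_t$ nondecreasing, the standard Abel-summation argument bounds the telescoped terms by $\tfrac{\alpha_{T-1}}{2}R_\cX^2$, because all distances between points of $\cX$ and $\myoverline{\CX}$ are at most $R_\cX$. The last index $t=T$ has no update step. For it I will use Young's inequality, $\langle g^\x_T, v_T-x\rangle\le \|g^\x_T\|_2^2/(2\alpha_{T-1})+\tfrac{\alpha_{T-1}}{2}R_\cX^2$, which produces the isolated $G_\x^2/(2\alpha_{T-1})$ term.

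\emph{Step 2 (the LMO and the $\lambda$ telescope).} This is the key step. Because $x_{t+1}=\LMO_\cX(-\lambda_t)$ maximizes $\langle\lambda_t,\cdot\rangle$ over $\cX$, we have $\langle\lambda_t,x_{t+1}-x\rangle\ge 0$ for every $x\in\cX$. Hence
\[
\langle \lambda_t, v_{t+1}-x\rangle \ge \langle\lambda_t, v_{t+1}-x_{t+1}\rangle=\eta_t\langle s_t,s_{t+1}-s_t\rangle=\tfrac{\eta_t}{2}\bigl(\|s_{t+1}\|_2^2-\|s_t\|_2^2-\|v_{t+1}-x_{t+1}\|_2^2\bigr).
\]
The last piece satisfies $\|v_{t+1}-x_{t+1}\|_2^2\le R_\cX^2$, which contributes $\sum_t \eta_t R_\cX^2/2$ (the constant $2$ in the statement leaves slack). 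Since $\eta_t$ is nonincreasing and $s_1=0$, Abel summation gives $\sum_{t=1}^{T-1}\tfrac{\eta_t}{2}(\|s_{t+1}\|_2^2-\|s_t\|_2^2)\ge \tfrac{\eta_{T-1}}{2}\|s_T\|_2^2$. Moving this term to the left-hand side produces exactly the $\tfrac{\eta_{T-1}}{2}\|\lambda_T/\eta_T\|_2^2$ term. Crucially, the comparator $x$ is arbitrary in $\cX$, so the inequality holds pathwise for every $x$ simultaneously, and in particular for the maximizer.

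\emph{Step 3 (stochastic error).} This is the main obstacle, because the maximum over $x$ sits inside the expectation. Let $\hat g_t=\E[g^\x_t\mid\mathcal F_{t-1}]\in\partial f_{z_t}(v_t)$ and $\xi_t=\hat g_t-g^\x_t$. Here I use that $(v_t,z_t)$ is $\mathcal F_{t-1}$-measurable in both algorithms, since $z_t=u_t$ or $z_t=y_t$. Convexity gives $f(v_t,z_t)-f(x,z_t)\le\langle g^\x_t,v_t-x\rangle+\langle\xi_t,v_t-x_1\rangle+\langle\xi_t,x_1-x\rangle$. The middle sum has zero mean. For the last sum, $\max_{x\in\cX}\langle\sum_t\xi_t,x_1-x\rangle\le R_\cX\|\sum_t\xi_t\|_2$. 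By Jensen and orthogonality of martingale increments, $\E\|\sum_t\xi_t\|_2\le(\sum_t\E\|\xi_t\|_2^2)^{1/2}\le G_\x T^{1/2}$, using $\E\|\xi_t\|_2^2\le\E\|g^\x_t\|_2^2\le G_\x^2$; this is within the stated $2G_\x R_\cX T^{1/2}$. Finally, take the maximum over $x$ of the pathwise inequality from Steps 1 and 2, then take expectations and bound $\E\|g^\x_t\|_2^2\le G_\x^2$. Collecting the constants gives \eqref{eq:generic-primal-lmo}.
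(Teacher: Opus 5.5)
Your proposal is correct and follows essentially the same route as the paper. The shared steps are the three-point inequality for the $v$-update, the LMO optimality condition $\langle \lambda_t, x_{t+1}-x\rangle \ge 0$, the polarization identity for $s_t=\lambda_t/\eta_t$ with Abel summation under monotone $\eta_t$ and $\alpha_t$, and the martingale decomposition anchored at $x_1$.

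The differences are cosmetic, and both versions land within the stated constants. You use the sharper bounds $\|v_t-x\|_2,\ \|v_{t+1}-x_{t+1}\|_2,\ \|x_1-x\|_2 \le R_{\cX}$ where the paper uses the crude $2R_{\cX}$. You handle the final index $t=T$ by Young's inequality plus boundedness, whereas the paper absorbs it into the retained telescoping term $-\tfrac{\alpha_{T-1}}{2}\|x-v_T\|_2^2$.
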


Although the proposition is stated for both primal and dual updates, inequalities~\eqref{eq:generic-primal-lmo} and~\eqref{eq:generic-dual-lmo} are \emph{symmetric} in structure. 
This symmetry is by design. Specifically, strong duality of the minimax problem~\eqref{eq:minimax} allows us to treat the primal and dual updates on equal footing, leading to algorithms with an identical structure for both variables. We therefore only provide the primal proof in the appendix, as the dual proof follows by an analogous argument.

The next proposition isolates the key recursive inequalities underlying the PO-based updates, which serve as the common building block in the convergence analyses of~\cref{alg:lmo-po,alg:po-lmo}. 

\begin{proposition}
\label{prop:generic-po}
    Suppose Assumptions~\ref{ass:extended} and \ref{ass:stochastic-oracle} hold.
    \begin{enumerate}[label=(\roman*),leftmargin=2em]
        \item Let $\{x_t\}_{t \geq 1}$ be the primal sequence generated by Algorithm~\ref{alg:po-lmo} with parameters satisfying $\rho_t \geq \rho_{t+1}$ for all $t \geq 1$.
        Then, for every $T \geq 1$,
        \begin{align}
        \label{eq:generic-primal-po}
            \E\left[\max_{x \in \cX}\sum_{t =1}^T \big( f(x_t, u_t) - f(x, u_t) \big) \right] &\leq \frac{2R_{\cX}^2}{\rho_T}+\sum_{t = 1}^T \frac{\rho_t G_\x^2}{2} + 2G_\x R_{\cX}T^{1/2}.
        \end{align}
        
        \item Let $\{y_t\}_{t \geq 1}$ be the dual sequences generated by Algorithm~\ref{alg:lmo-po} with parameters satisfying $\gamma_t \geq \gamma_{t+1}$ for all $t \geq 1$. 
        Then, for every $T \geq 1$, 
        \begin{align}
        \label{eq:generic-dual-po}
            \E\left[\max_{y \in \cY}\sum_{t =1}^T \big( f(v_t,y)-f(v_t,y_t) \big) \right] &\leq \frac{2R_{\cY}^2}{\gamma_T}+\sum_{t = 1}^T \frac{\gamma_t G_\y^2}{2} + 2G_\y R_{\cY}T^{1/2}.
        \end{align}
    \end{enumerate}
\end{proposition}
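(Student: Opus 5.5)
The bound is the standard online projected stochastic subgradient analysis, except that the comparator sits inside a maximum. That forces a ghost-iterate argument for the noise term. I write out part (i); part (ii) is the same argument applied to the concave function $-f(v_t,\cdot)$ with step $\gamma_t$, radius $R_\cY$ and constant $G_\y$.

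\emph{Step 1: one-step inequality.} Fix any $x\in\cX$. Nonexpansiveness of $\PO_\cX$ gives
\[
\|x_{t+1}-x\|_2^2\le \|x_t-x\|_2^2-2\rho_t (g^\x_t)^\top(x_t-x)+\rho_t^2\|g^\x_t\|_2^2 .
\]
Write $\hat g_t:=\E[g^\x_t\mid\mathcal F_t]\in\partial f_{u_t}(x_t)$ and $\xi_t:=\hat g_t-g^\x_t$. Convexity of $f(\cdot,u_t)$ then yields
\[
f(x_t,u_t)-f(x,u_t)\le \hat g_t^\top(x_t-x)=(g^\x_t)^\top(x_t-x)+\xi_t^\top(x_t-x).
\]

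\emph{Step 2: deterministic part.} Divide the one-step inequality by $2\rho_t$ and sum over $t$. Because $\rho_t$ is nonincreasing and $\|x_t-x\|_2\le R_\cX$ (both points lie in $\cX\subseteq\myoverline{\CX}$), the usual Abel summation bounds the telescoped distance terms by $R_\cX^2/(2\rho_T)$. The $\rho_t\|g^\x_t\|_2^2/2$ terms have expectation at most $\sum_t \rho_t G_\x^2/2$.

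\emph{Step 3: noise part.} The remaining term is $\sum_t\xi_t^\top(x_t-x)$, and the maximum over $x$ is the main obstacle. The piece $\sum_t \xi_t^\top x_t$ has zero mean because $x_t$ is $\mathcal F_t$-measurable. The piece $\max_x\bigl(-\sum_t\xi_t^\top x\bigr)$ is handled in one of two ways.

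\begin{itemize}
\item \textbf{Direct route.} Fix a reference point $x_1\in\cX$ and write $-\sum_t\xi_t^\top x = -\sum_t \xi_t^\top x_1+\sum_t\xi_t^\top(x_1-x)$. The first sum has zero mean. The second is at most $R_\cX\|\sum_t\xi_t\|_2$. The martingale differences are orthogonal and $\E\|\xi_t\|_2^2\le \E\|g^\x_t\|_2^2\le G_\x^2$, so $\E\|\sum_t\xi_t\|_2\le G_\x T^{1/2}$.
\item \textbf{Ghost-iterate route} (it produces the stated constants). Run an auxiliary projected sequence $\tilde x_{t+1}=\PO_\cX(\tilde x_t+\rho_t\xi_t)$ to absorb $\xi_t^\top(\tilde x_t-x)$ uniformly in $x$. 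This adds a further $R_\cX^2/(2\rho_T)+\sum_t \rho_t\E\|\xi_t\|^2/2$, and the extra $\rho G^2$ and $1/\rho$ terms can be merged into the stated slack.
\end{itemize}

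Either way, the noise contributes at most a $2G_\x R_\cX T^{1/2}$ term.

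\emph{Step 4: constants.} Combining the pieces, every bound is dominated by $2R_\cX^2/\rho_T+\sum_t\rho_tG_\x^2/2+2G_\x R_\cX T^{1/2}$. I expect the only delicate point to be matching the exact constants, with the factor $2$ absorbing the slack. This gives \eqref{eq:generic-primal-po}, and \eqref{eq:generic-dual-po} follows by symmetry.
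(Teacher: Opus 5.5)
Your direct route is the paper's proof: nonexpansiveness of $\PO_\cX$, then Abel summation using $\rho_t\ge\rho_{t+1}$, then splitting off the reference point $x_1$. That split makes $\sum_t\xi_t^\top(x_t-x_1)$ mean-zero and controls $\max_{x\in\cX}\sum_t\xi_t^\top(x_1-x)$ via $\E\|\sum_t\xi_t\|_2\le G_\x T^{1/2}$, by orthogonality of martingale differences. You use $\|x-x'\|_2\le R_\cX$ for $x,x'\in\cX$ where the paper uses the crude $2R_\cX$. So you actually obtain the sharper bound $R_\cX^2/(2\rho_T)+\sum_t\rho_tG_\x^2/2+G_\x R_\cX T^{1/2}$, which implies the stated one.

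Drop the ghost-iterate route, however. It is the direct route, not the ghost one, that delivers the stated constants. A ghost sequence run with the same $\rho_t$ costs $R_\cX^2/(2\rho_T)+\sum_t\rho_t\E\|\xi_t\|_2^2/2$, which is not $O(G_\x R_\cX T^{1/2})$ for arbitrary nonincreasing $\rho_t$. The extra $\sum_t\rho_tG_\x^2/2$ cannot be absorbed into the stated right-hand side: take $\rho_t\equiv\rho$ with $\rho\ge 6R_\cX/(G_\x T^{1/2})$. So your claim that either way the noise costs at most $2G_\x R_\cX T^{1/2}$ is false for that route. Also, its step should be $\tilde x_t-\rho_t\xi_t$, not $\tilde x_t+\rho_t\xi_t$, given $\xi_t=\hat g_t-g^\x_t$. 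The ghost route would work only with its own stepsizes $\tilde\rho_t\propto R_\cX/(G_\x t^{1/2})$, and even then it gives nothing beyond the direct argument.
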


\cref{prop:generic-lmo,prop:generic-po} provide all the building blocks needed for the convergence analyses of the three algorithms, each of which will follow by combining the appropriate pair of inequalities. As before, the primal and dual inequalities~\eqref{eq:generic-primal-po} and \eqref{eq:generic-dual-po} are symmetric in structure, so we provide only the proof of the primal part in the appendix. To keep the analysis unified, we also prove a slightly more general version of \cref{prop:generic-po} that accommodates the PO-PO combination.

\section{Convergence Guarantees}
\label{sec:convergence}

We now present the convergence guarantees of \cref{alg:lmo-lmo,alg:lmo-po,alg:po-lmo}.
\medskip
\begin{theorem} \label{theorem:convergence}
    Suppose Assumptions~\ref{ass:extended} and~\ref{ass:stochastic-oracle} hold. 
    \begin{enumerate}[label=(\roman*),leftmargin=2em]
        \item 
        \label{theorem:convergence-lmo-lmo}
        Let $\{x_t,y_t\}_{t \geq 1}$ be the sequence generated by \cref{alg:lmo-lmo} with parameters $\alpha_t = \Theta(t^{1/2})$, $\beta_t= \Theta(t^{1/2})$, $\eta_t =\Theta(t^{-1/2})$ and $\tau_t=\Theta(t^{-1/2})$ for any $t \geq 1$. 
        Then, for every $T \ge 2$,
        \begin{align*}
            \Gap(\bar{x}_T, \bar{y}_T) 
            = \E \left[ \max_{y \in \cY}f(\bar{x}_T,y) - \min_{x \in \cX}f(x,\bar{y}_T) \right] 
            \leq O(T^{-1/2}).
        \end{align*}
        \item 
        \label{theorem:convergence-lmo-po}
        Let $\{x_t,y_t\}_{t \geq 1}$ be the sequence generated by \cref{alg:lmo-po} with parameters $\alpha_t= \Theta(t^{1/2})$, $\gamma_t= \Theta(t^{-1/2})$ and $\eta_t =\Theta(t^{-1/2})$ for any $t \geq 1$. 
        Then, for every $T \ge 2$,
        \begin{align*}
            \Gap(\bar{x}_T, \bar{y}_T) 
            = \E \left[ \max_{y \in \cY}f(\bar{x}_T,y) - \min_{x \in \cX}f(x,\bar{y}_T) \right] 
            \leq O(T^{-1/2}).
        \end{align*}
        \item 
        \label{theorem:convergence-po-lmo}
        Let $\{x_t,y_t\}_{t \geq 1}$ be the sequence generated by \cref{alg:po-lmo} with parameters $\rho_t= \Theta(t^{-1/2})$, $\beta_t= \Theta(t^{1/2})$ and $\tau_t=\Theta(t^{-1/2})$ for any $t \geq 1$. 
        Then, for every $T \ge 2$,
        \begin{align*}
            \Gap(\bar{x}_T, \bar{y}_T) 
            = \E \left[ \max_{y \in \cY}f(\bar{x}_T,y) - \min_{x \in \cX}f(x,\bar{y}_T) \right] 
            \leq O(T^{-1/2}).
        \end{align*}
    \end{enumerate}
\end{theorem}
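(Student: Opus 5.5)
The plan is to add the appropriate pair of inequalities from \cref{prop:generic-lmo,prop:generic-po}, lower-bound the resulting sum of the two ``max'' terms by a nonnegative saddle gap at auxiliary averages, and then transfer that gap to the actual output $(\bar x_T,\bar y_T)$ using the accumulated-direction terms $\|\lambda_T/\eta_T\|_2^2$ and $\|\mu_T/\tau_T\|_2^2$. I would write the argument for part (i) and indicate the changes for (ii) and (iii).

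\textbf{Step 1: combine the two propositions.} For \cref{alg:lmo-lmo}, add \eqref{eq:generic-primal-lmo} with $z_t=u_t$ and \eqref{eq:generic-dual-lmo} with $w_t=v_t$. Write $\bar v_T:=\frac1T\sum_t v_t$ and $\bar u_T:=\frac1T\sum_t u_t$. The two max terms add to $\max_{x,y}\sum_t\bigl(f(v_t,y)-f(x,u_t)\bigr)$. By convexity of $f(\cdot,u)$ and concavity of $f(v,\cdot)$ on the enclosing sets, this is at least $T\bigl(\max_y f(\bar v_T,y)-\min_x f(x,\bar u_T)\bigr)$. That quantity is $\ge 0$ by weak duality, since $\bar v_T\in\myoverline{\CX}$, $\bar u_T\in\myoverline{\CY}$ and $\max_{y\in\cY}\min_{x\in\cX}\le\min_{x\in\cX}\max_{y\in\cY}$.

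Plugging $\alpha_t,\beta_t=\Theta(t^{1/2})$ and $\eta_t,\tau_t=\Theta(t^{-1/2})$ into the right-hand sides gives $O(T^{1/2})$ for each, because $\sum_{t<T}\eta_t$, $\sum_{t<T}1/\alpha_t$ and $\alpha_{T-1}$ are all $O(T^{1/2})$. Hence, with $C=O(1)$:
\begin{itemize}
\item $T\,\E\bigl[\max_y f(\bar v_T,y)-\min_x f(x,\bar u_T)\bigr]\le C T^{1/2}$;
\item $\frac{\eta_{T-1}}{2}\E\|\lambda_T/\eta_T\|_2^2\le CT^{1/2}$ and $\frac{\tau_{T-1}}{2}\E\|\mu_T/\tau_T\|_2^2\le CT^{1/2}$.
\end{itemize}
Both bullet points hold because every summand on the left is nonnegative.

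\textbf{Step 2: telescoping identity.} Unrolling $\lambda_{t+1}/\eta_{t+1}=\lambda_t/\eta_t+v_{t+1}-x_{t+1}$ with $\lambda_1=0$ and $v_1=x_1$ gives $\lambda_T/\eta_T=\sum_{t=1}^T(v_t-x_t)=T(\bar v_T-\bar x_T)$. Since $\eta_{T-1}=\Theta(T^{-1/2})$, Step 1 yields $\E\|\lambda_T/\eta_T\|_2^2=O(T)$. By Jensen, $\E\|\bar v_T-\bar x_T\|_2=O(T^{-1/2})$, and symmetrically $\E\|\bar u_T-\bar y_T\|_2=O(T^{-1/2})$.

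\textbf{Step 3: Lipschitz transfer.} By \cref{ass:stochastic-oracle}, the mean oracle output is a subgradient with norm at most $\sqrt{\E\|g^\x\|_2^2}\le G_\x$ at every point of $\myoverline{\CX}$. Hence $f(\cdot,y)$ is $G_\x$-Lipschitz on $\myoverline{\CX}$ (in the relative interior, extended by convexity), and likewise $f(x,\cdot)$ is $G_\y$-Lipschitz on $\myoverline{\CY}$. Therefore
\[
\max_y f(\bar x_T,y)-\min_x f(x,\bar y_T)\le \max_y f(\bar v_T,y)-\min_x f(x,\bar u_T)+G_\x\|\bar x_T-\bar v_T\|_2+G_\y\|\bar y_T-\bar u_T\|_2 .
\]
Taking expectations and using Steps 1 and 2 gives $\Gap(\bar x_T,\bar y_T)=O(T^{-1/2})$.

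\textbf{Parts (ii) and (iii).} For (ii), combine \eqref{eq:generic-primal-lmo} with $z_t=y_t$ and \eqref{eq:generic-dual-po} with $\gamma_t=\Theta(t^{-1/2})$. The same convexity argument bounds the gap at $(\bar v_T,\bar y_T)$, and only the $x$-side transfer is needed. Part (iii) is symmetric, using \eqref{eq:generic-primal-po} and \eqref{eq:generic-dual-lmo}.

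\textbf{Main obstacle.} The delicate step is Step 1: it extracts a bound on $\E\|\lambda_T/\eta_T\|_2^2$ from a combined inequality whose max terms are not individually signed. Taken alone, $\max_x\sum_t(f(v_t,z_t)-f(x,z_t))$ can be negative, since $v_t\notin\cX$ is possible. Adding the primal and dual bounds is exactly what produces a nonnegative gap and lets the quadratic terms be isolated. I would also double-check the weak-duality step when the auxiliary averages lie outside $\cX\times\cY$. The argument does not actually require feasibility here: it only uses $\max_{y\in\cY} f(\bar v_T,y)\ge f(\bar v_T,\hat y)\ge \min_{x\in\cX}f(x,\hat y)$ for any fixed $\hat y\in\cY$, chosen so that $\min_{x\in\cX}f(x,\hat y)\ge \min_{x\in\cX}f(x,\bar u_T)$.
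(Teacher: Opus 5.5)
\textbf{The gap is in Step~1.} The quantity $D_T:=\max_{y\in\cY}f(\bar v_T,y)-\min_{x\in\cX}f(x,\bar u_T)$ need not be nonnegative. Weak duality only makes the gap nonnegative at points of $\cX\times\cY$, and $\bar v_T\in\myoverline{\cX}$, $\bar u_T\in\myoverline{\cY}$ may be infeasible.

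Your fallback in the last paragraph fails for the same reason. The inequality $f(\bar v_T,\hat y)\ge\min_{x\in\cX}f(x,\hat y)$ needs $\bar v_T\in\cX$. Also, a $\hat y\in\cY$ with $\min_{x\in\cX}f(x,\hat y)\ge\min_{x\in\cX}f(x,\bar u_T)$ need not exist when $\bar u_T\notin\cY$.

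Here is a concrete run where $D_T<0$. Take $f(x,y)=x-y$, $\cX=\cY=[0,1]$, $\myoverline{\cX}=\myoverline{\cY}=[-1,1]$, $x_1=y_1=0$, and the exact oracle $g^\x_t=g^\y_t=1$. \cref{alg:lmo-lmo} gives $v_2=-\min\{1,1/\alpha_1\}$ and $u_2=-\min\{1,1/\beta_1\}$. Hence $D_2=\bar v_2+\bar u_2<0$.

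So only your first bullet survives, since it drops the nonnegative quadratic terms. The second bullet drops $TD_T$, which is unjustified, and Steps~2--3 rest on it. The crude bound $D_T\ge-2(G_\x R_{\cX}+G_\y R_{\cY})$ does not rescue it: it only yields $\E\|\lambda_T/\eta_T\|_2^2=O(T^{3/2})$.

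\textbf{The fix.} Do the Lipschitz transfer pointwise inside the combined inequality, before separating any terms. Since $\bar x_T\in\cX$ and $\bar y_T\in\cY$, your Steps~2 and~3 give, for every realization,
\[
T D_T \ge T\Bigl(\max_{y\in\cY}f(\bar x_T,y)-\min_{x\in\cX}f(x,\bar y_T)\Bigr)-G_\x\Bigl\|\frac{\lambda_T}{\eta_T}\Bigr\|_2-G_\y\Bigl\|\frac{\mu_T}{\tau_T}\Bigr\|_2 .
\]
The linear terms are then absorbed by the quadratic ones via $\frac{a}{2}s^2-bs\ge-\frac{b^2}{2a}$. This costs $\frac{G_\x^2}{2\eta_{T-1}}+\frac{G_\y^2}{2\tau_{T-1}}=O(T^{1/2})$. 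Dividing by $T$ gives $\Gap(\bar x_T,\bar y_T)=O(T^{-1/2})$ directly, with no separate second-moment bound and no Jensen step. This is essentially the paper's proof (\cref{cor:saddle-gap-lmo-lmo,cor:saddle-gap-lmo-po}).

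If you want to keep your structure, use the same inequality together with $\frac{\eta_{T-1}}{4}s^2-G_\x s\ge-\frac{G_\x^2}{\eta_{T-1}}$ and the nonnegativity of the gap at the feasible pair $(\bar x_T,\bar y_T)$. That does recover $\E\|\lambda_T/\eta_T\|_2^2=O(T)$, after which Steps~2--3 go through. Parts~(ii) and~(iii) need the same repair, because $\bar v_T$ (respectively $\bar u_T$) is again possibly infeasible.
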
 

The proof of \cref{theorem:convergence}, with explicit constants and parameter settings, is provided in the appendix, where it is broken into three separate theorems each handling one oracles combination. To keep the analysis unified, we also include a proof for the PO-PO combination, which yields an anytime implementation of the method of \citet{nemirovski2009robust} with the strong saddle gap guarantee, as opposed to the weak guarantee established in the original work.

We now position our contributions within the literature on projection-free minimax optimization. Across all three oracle configurations, the same advantages apply: by avoiding dual smoothing and exploiting the natural symmetry of the convex-concave saddle-point structure, we achieve a uniform $O(\epsilon^{-2})$ oracle complexity, substantially improving on prior work.

In the context of smooth deterministic minimax optimization problems, \citet{boroun2023projection} proposed the first single-loop LMO-LMO method for nonconvex-concave problems, and their rate easily translates to the convex-concave setting, yielding $\cO(\epsilon^{-6})$ complexity. However, their implementation is not anytime, requiring the total number of iterations or the target precision to be fixed in advance. Moreover, the algorithm is only applicable when the feasible set $\cY$ is strongly convex, as the stepsize selection crucially relies on this knowledge. \citet{giang2026projection} relax this requirement and propose an anytime implementation achieving the same $\cO(\epsilon^{-6})$ rate, which can be further improved to $\cO(\epsilon^{-5})$ when $\cY$ is additionally strongly convex. Nonetheless, both approaches are restricted to smooth deterministic problems and rely on a dual smoothing framework, which breaks the inherent symmetry of convex-concave saddle-point problems. As a result, they achieve these inferior rates even in the smooth deterministic setting. In contrast, our algorithm handles the more general nonsmooth stochastic setting, avoids smoothing entirely, and admits a naturally symmetric implementation that, as we will see, leads to the improved rate of $\cO(\epsilon^{-2})$.

Both \citet{boroun2023projection} and \citet{giang2026projection} also study the LMO-PO configuration, again restricted to smooth deterministic problems. The former achieves $\cO(\epsilon^{-4})$ without an anytime guarantee, while the latter removes this limitation at the same rate. Notably, the LMO-PO configuration outperforms its fully projection-free counterpart, because a single PO-based dual update yields a more accurate estimate of the gradient of the regularized primal function than a single LMO-based update, leading to a tighter control of the smoothing error. Yet despite this advantage, both methods remain hampered by the dual smoothing framework and its symmetry-breaking effect. Our algorithm, which operates in more general setting and avoids smoothing altogether, achieves $\cO(\epsilon^{-2})$ regardless.

The PO-LMO configuration was first studied in the context of smooth deterministic minimax optimization by \citet{giang2026projection}, where an anytime implementation is proposed. Unlike the LMO-PO case, replacing the primal PO update with an LMO update leads to a coarser approximation of the primal gradient, which explains why the rate degrades back to $\cO(\epsilon^{-6})$, matching the fully projection-free case. Our algorithm sidesteps this degradation entirely: by operating in the nonsmooth stochastic setting without smoothing and preserving the natural symmetry of the saddle-point structure, it achieves $\cO(\epsilon^{-2})$ across all three oracle configurations.

\section{Lower Bounds}
\label{sec:lower-bounds}

We now show that the upper bounds in \cref{theorem:convergence} are minimax optimal in their dependence on $T$, up to absolute constants. To this end, we introduce the generic LMO-LMO and LMO-PO procedures in \cref{alg:generic-lmo-lmo,alg:generic-lmo-po}, respectively. 
Throughout this section, lower bounds are stated in a worst-case oracle model. Thus, the adversary may choose the feasible sets $\cX,\cY$, the initial points $x_1,y_1$, the payoff function~$f$, and a valid tie-breaking rule for the LMO oracles. In particular, the adversary initializes $x_1$ and $y_1$ at extreme points and uses LMO oracles that always return extreme-point minimizers. 
For the LMO-PO class, only the primal variable is constrained by LMO calls, while the dual variable may be updated via any arbitrary map of historical first-order and value information, potentially incorporating projections. We omit the PO-LMO case, since the corresponding lower bound follows by swapping the primal and dual roles.

\begin{table*}[!h]
\centering
\begin{minipage}[t]{0.4\textwidth}
\vspace{-2em}
\begin{algorithm}[H]
    \small
    \caption{A generic \texttt{LMO-LMO} method}
    \label{alg:generic-lmo-lmo}
    \begin{algorithmic}
        \REQUIRE $ x_1 \in \cX,y_1 \in \cY$ \vspace{0.5ex}
        \FOR{$t = 1, \dots, T$} \vspace{0.5ex}
            \STATE Construct some $(p_t, q_t) \in \R^n \times \R^m$ \vspace{0.5ex}
            \STATE $x_{t+1} = \LMO_{\mathcal{X}}(p_t)$ \vspace{0.5ex}
            \STATE $y_{t+1} = \LMO_{\mathcal{Y}}(-q_t)$ \vspace{0.5ex}
        \ENDFOR
        \vspace{0.5ex}
        \ENSURE $\bar x_T \in \conv\left(\{x_1,\dots,x_{T+1}\}\right),$ \\ 
        \hspace{2.7em} $\bar y_T \in \conv\left(\{y_1,\dots,y_{T+1}\}\right)$
    \end{algorithmic}
\end{algorithm}
\end{minipage}
\hfill
\begin{minipage}[t]{0.55\textwidth}
\vspace{-2em}
\begin{algorithm}[H]
    \small
    \caption{A generic \texttt{LMO-PO} method}
    \label{alg:generic-lmo-po}
    \begin{algorithmic}
        \REQUIRE $ x_1 \in \cX,y_1 \in \cY$ \vspace{0.5ex}
        \FOR{$t = 1, \dots, T$} \vspace{0.5ex}
            \STATE Construct some $p_t \in \R^n $ \vspace{0.5ex}
            \STATE $x_{t+1} = \LMO_{\mathcal{X}}(p_t)$ \vspace{0.5ex}
            \STATE Construct some $v_1,\dots, v_t \in \R^n$ and a map $A_t$ \vspace{0.5ex}
            \STATE $y_{t+1} = A_t\left(\left\{\partial f_{y_i}(v_i), \partial (-f)_{v_i}(y_i),f(v_i, y_i)\right\}_{1 \leq i \leq t}\right)$ \vspace{0.2ex}
        \ENDFOR
        \vspace{0.3ex}
        \ENSURE $\bar x_T \in \conv\left(\{x_1,\dots,x_{T+1}\}\right),  \bar{y}_T = y_{T+1}$
    \end{algorithmic}
\end{algorithm}
\end{minipage}
\end{table*}

Given convex compact subsets $\cX \subseteq \R^n, \cY \subseteq \R^m$ with radius $R_{\cX} >0, R_{\cY} >0$, we denote by $\cF_{G_\x,G_\y, \|\cdot\|_2}^0(\cX,\cY)$ the class of all saddle-point problems~\cref{eq:minimax} whose payoff function $f$ is convex-concave and and for any $x,x' \in \cX, y,y' \in \cY$
\begin{align*}
    |f(x,y)-f(x',y)| &\leq G_\x\|x-x'\|_2 \quad \& \quad |f(x,y)-f(x,y')| \leq G_\y\|y-y'\|_2.
\end{align*}

\begin{theorem} \label{thm:lower-bound-lmo-lmo}
    Let $G_\x>0,G_\y>0, R_{\cX} >0, R_{\cY}>0$ be fixed. Then, for any generic LMO-LMO method of the form \cref{alg:generic-lmo-lmo} running for $T \geq 1$ iterations, there exist convex compact subsets $\cX \subseteq \R^n, \cY \subseteq \R^m$ with radii $R_{\cX}$ and $R_{\cY}$, respectively, and a function $f \in \cF_{G_\x,G_\y,\|\cdot\|_2}^0(\cX,\cY)$ with $T \leq  \min\{m,n\}/4-1$ such that
    \begin{align*}
        \max_{y\in\cY}f(\bar x_T,y)-\min_{x\in\cX}f(x,\bar y_T)
        \geq
        \frac{G_\x R_{\cX}}{2(T+1)^{1/2}}
        +
        \frac{G_\y R_{\cY}}{2(T+1)^{1/2}}.
    \end{align*}
\end{theorem}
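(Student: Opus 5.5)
We construct a separable hard instance and then argue separately about the primal and dual blocks. Take $f(x,y) = h(x) - k(y)$ with $h$ convex and $G_\x$-Lipschitz on $\cX$, and $k$ convex and $G_\y$-Lipschitz on $\cY$. Then
\[
\max_{y\in\cY} f(\bar x_T,y) - \min_{x\in\cX} f(x,\bar y_T) = \bigl(h(\bar x_T) - \min_{\cX} h\bigr) + \bigl(k(\bar y_T) - \min_{\cY} k\bigr).
\]
So it suffices to show that each suboptimality term is at least $G R/(2(T+1)^{1/2})$ for the corresponding pair $(G_\x,R_{\cX})$ or $(G_\y,R_{\cY})$. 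The two blocks decouple completely. The $x$-iterates are produced by $\LMO_{\cX}(p_t)$ and the $y$-iterates by $\LMO_{\cY}(-q_t)$, with arbitrary directions $p_t,q_t$. Hence it is enough to prove a single one-block lemma: for any method producing $x_{t+1}=\LMO_{\cX}(p_t)$ with arbitrary directions and outputting a point in $\conv\{x_1,\dots,x_{T+1}\}$, there is a set $\cX$ of radius $R$ and a $G$-Lipschitz convex $h$ with $h(\bar x)-\min_\cX h \ge GR/(2(T+1)^{1/2})$.

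For the one-block lemma we follow the classical Lan (2013) LMO lower bound, which needs no information about $h$ at all. Take $n \ge 4(T+1)$ and let $\cX$ be a scaled simplex $\conv\{c\,e_1,\dots,c\,e_n\}$, with $c$ chosen so that the diameter $\sqrt2\,c$ equals $R_{\cX}$; this fixes a convention for the radius that we adjust at the end. The adversary starts at $x_1 = c\,e_{i_1}$ and breaks ties so the LMO always returns a vertex. Then $\conv\{x_1,\dots,x_{T+1}\}$ lies in a face spanned by at most $T+1$ vertices with index set $S$, $|S|\le T+1$.

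The key point is that $\bar x_T$ can only use coordinates in $S$, while the adversary may pick $h$ after seeing the run. This is legitimate in the worst-case model: the LMO answers do not depend on $h$, since the directions $p_t$ may be fixed or even adversarially irrelevant for a deterministic method. Choose $h(x) = G\,\|x - \tfrac{c}{n}\one\|_2$, or more cleanly $h(x) = G\|x-x^\star\|_2$ with $x^\star$ the barycenter of a face disjoint from $S$ with many vertices. Any point supported on $S$ is then at distance at least about $c\sqrt{1/|S| + 1/(n-|S|)}\gtrsim c/(T+1)^{1/2}$ from $x^\star$. Converting $c$ to $R_{\cX}$ and tuning the constants, using $n\ge 4(T+1)$ so that $n-|S|$ is large, gives the bound $GR/(2(T+1)^{1/2})$. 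A linear $h$ would not suffice here, since its minimizer is a vertex.

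The main obstacle is making this adaptive-adversary argument rigorous. The directions $p_t$ may depend on oracle information about $f$, and the generic scheme in \cref{alg:generic-lmo-lmo} permits that. To handle it, we use resisting-oracle reasoning: fix the function in advance as symmetric under coordinate permutations, so that before any coordinates are revealed every vertex looks identical. Alternatively, note that the bound relies only on $|S|\le T+1$ and the permutation symmetry of $h(x)=G\|x - \tfrac{c}{n}\one\|_2$. For the symmetric choice, any point supported on at most $T+1$ vertices is at distance at least $c\sqrt{1/(T+1)-1/n}$ from the center, regardless of which vertices were chosen. This removes adaptivity entirely, and $n\ge 4(T+1)$ guarantees $1/(T+1) - 1/n \ge 3/(4(T+1))$.

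The remaining steps are routine. Verify the Lipschitz constants in $\cF^0$ for the separable $f$. Check that the radius convention matches $R_{\cX}$, rescaling $c$ if necessary. Finally, apply the same construction to $\cY$ with $m\ge 4(T+1)$ and sum the two contributions.
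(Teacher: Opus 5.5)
Your final argument is correct and is essentially the paper's proof. It uses a separable payoff $h(x)-k(y)$ on scaled simplices with $n,m\ge 4(T+1)$, and vertex-returning LMO tie-breaking so that $\bar x_T,\bar y_T$ are supported on at most $T+1$ vertices. It then takes a permutation-symmetric Euclidean-norm objective, so the bound holds no matter which vertices were visited; as you observe, this is what removes the adaptivity issue. The only difference is cosmetic: the paper uses $G_\x\|x\|_2$ on $R_{\cX}\cdot\Delta_n$, while your barycenter-centered $G_\x\|x-\tfrac{c}{n}\one\|_2$ gives the per-block bound $G_\x c\sqrt{3}/(2(T+1)^{1/2})$, which still exceeds $G_\x R_{\cX}/(2(T+1)^{1/2})$ even under your stricter diameter convention $c=R_{\cX}/\sqrt{2}$.
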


\begin{theorem} \label{thm:lower-bound-lmo-po}
    Let $G_\x>0,G_\y>0, R_{\cX} >0, R_{\cY}>0$ be fixed. Then, for any generic LMO-PO method of the form \cref{alg:generic-lmo-po} running for $T \geq 1$ iterations, there exist convex compact subsets $\cX \subseteq \R^n, \cY \subseteq \R^m$ with radii $R_{\cX}$ and $R_{\cY}$, respectively, and a function $f \in \cF_{G_\x,G_\y,\|\cdot\|_2}^0(\cX,\cY)$ with $T \leq  \min\{m,n\}/4-1$ such that
    \begin{align*}
        \max_{y \in \cY} f(\bar{x}_T,y)-\min_{x \in \cX} f(x,\bar{y}_T)
        \geq
        \frac{G_\x R_{\cX}}{2(T+1)^{1/2}}
        +
        \frac{G_\y R_{\cY}}{2(T+1)^{1/2}}.
    \end{align*}
\end{theorem}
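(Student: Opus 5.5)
We construct a separable hard instance $f(x,y) = \phi(x) - \psi(y)$ with $\phi$ convex and $\psi$ convex. For such an $f$ the saddle gap splits exactly:
\[
\max_{y\in\cY} f(\bar x_T,y)-\min_{x\in\cX} f(x,\bar y_T)
= \Bigl(\phi(\bar x_T)-\min_{\cX}\phi\Bigr)+\Bigl(\psi(\bar y_T)-\min_{\cY}\psi\Bigr).
\]
So it suffices to prove two independent one-sided minimization lower bounds. The first is $\phi(\bar x_T)-\min_{\cX}\phi\ge G_\x R_{\cX}/(2\sqrt{T+1})$ for any $\bar x_T$ in the convex hull of $T+1$ LMO outputs over $\cX$. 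The second is the analogous bound for $\psi$: in the LMO-LMO case it again concerns LMO outputs over $\cY$, while in the LMO-PO case it must hold against an arbitrary first-order method in $y$. The $x$- and $y$-instances live on disjoint coordinates, so the two sides do not interact. The Lipschitz constants of $f$ in $x$ and $y$ are exactly those of $\phi$ and $\psi$, and convexity-concavity is immediate.

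For the LMO side we follow the construction of \citet{lan2013complexity}. Take $\cX$ to be a scaled simplex, $\cX = \{x\ge 0: \sum_i x_i = r\}\subseteq\R^n$ with $n\ge 4(T+1)$, with $r$ chosen so that the diameter matches $R_{\cX}$; the diameter is $r\sqrt2$. Take $\phi(x) = c\,\|x\|_2$, or a max-of-linear resisting function, with $c\le G_\x$. Every LMO output over the simplex is a vertex $r e_i$, and the adversary's tie-breaking returns extreme points. Hence $\bar x_T$ is supported on at most $T+1$ coordinates, which gives $\|\bar x_T\|_2 \ge r/\sqrt{T+1}$. Meanwhile $\min_{\cX}\|x\|_2 = r/\sqrt n$ is attained at the barycenter. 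Taking $n\ge 4(T+1)$ yields a gap of at least $c\,r\bigl(1/\sqrt{T+1}-1/\sqrt n\bigr)\ge c\,r/(2\sqrt{T+1})$. We then rescale $r$ and $c$ so that the prefactor equals $G_\x R_{\cX}/2$, using the exact radius convention of the paper. Nothing depends on the directions $p_t$, because support counting alone constrains the iterate. The identical construction on $\cY$ with $\psi(y)=c'\|y\|_2$ handles the dual side for \cref{thm:lower-bound-lmo-lmo} and finishes that theorem.

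For \cref{thm:lower-bound-lmo-po}, the $x$-side is unchanged. For the $y$-side we use the classical resisting-oracle lower bound for nonsmooth first-order methods \citep{nesterov2013introductory,nemirovski1983problem}. Take $\psi(y)=c'\max_{1\le i\le k} y_i + \text{(small regularization)}$ over a Euclidean ball $\cY$, with $k=T+1\le m$. The adversarial subgradient oracle reveals at most one new coordinate per query, and the output $y_{T+1}=A_T(\cdot)$ is required only to be measurable with respect to the revealed information. Since the map $A_t$ may include projections, we choose $\cY$ to be a ball centered at the origin, so projections preserve the span of revealed coordinates. The standard argument then gives $\psi(\bar y_T)-\min\psi \ge c'R/(2\sqrt{T+1})$ after rescaling.

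The main obstacle is this last step. The dual update $A_t$ in \cref{alg:generic-lmo-po} is an arbitrary map of the history, not a span-restricted method, so the Nesterov argument needs its resisting-oracle form to apply. If that proves delicate, we will instead reuse the simplex/support argument with a large-dimension $\psi$ and invoke orthogonal-invariance randomization in the style of \citet{guzman2015lower}. A second point of care is matching constants to exactly $1/2$ under the paper's definition of radius; this requires the conditions $n,m\ge 4(T+1)$, which is where the hypothesis $T\le\min\{m,n\}/4-1$ is used.
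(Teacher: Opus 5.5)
Your reduction is the paper's: a separable payoff so the gap splits exactly, and on the LMO side the scaled simplex with $G_\x\|x\|_2$ and support counting. You should also let the adversary place $x_1$ at a vertex, as the paper does; otherwise a method with $x_1$ at the barycenter and $\bar x_T=x_1$ has zero primal gap.

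The genuine gap is the PO side, which you flag but do not close. Your concrete plan (the function $c'\max_{1\le i\le k}y_i$ plus regularization on fixed coordinates, with a centered ball so that projections preserve the span of revealed coordinates) is Nesterov's argument for span-restricted methods. In \cref{alg:generic-lmo-po} the map $A_t$ is arbitrary, so nothing places $y_{t+1}$, before or after any projection, in that span. A $\psi$ chosen independently of the method is defeated by the method that ignores its input and returns $\argmin_{\cY}\psi$. Measurability with respect to the revealed information does not prevent this, since a constant output is measurable. Moreover, the generic method receives the subdifferentials $\partial(-f)_{v_i}(y_i)$ and the values $f(v_i,y_i)$, not one oracle-selected subgradient, so there is no tie-breaking freedom to exploit. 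A single query at $y=0$ returns $c'\conv\{e_1,\dots,e_k\}$ and reveals all $k$ coordinates at once. Your fallbacks do not repair this. Support counting needs the output to lie in the convex hull of a few LMO vertices, which an arbitrary $A_t$ need not respect, and the randomization route is not carried out.

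What is missing is a function built adaptively from the method's own queries, with consistency enforced on neighborhoods of the queries rather than through the oracle's choice of subgradient. The paper does this on $\cY=R_{\cY}\mathbb{B}_2^m$ as follows. After $y_k$ is determined, pick $\sigma(k)$ among the unused coordinates maximizing $|(y_k)_j|$, and pick $a_k\in\{\pm1\}$ with $a_k(y_k)_{\sigma(k)}=|(y_k)_{\sigma(k)}|$. Then set $h_k(y)=G_\y\max_{i\le k}\{a_iy_{\sigma(i)}-2(i-1)\delta\}$ with $\delta=R_{\cY}/(4T(T+1)^{1/2})$. The staggered offsets make every later piece dominated by the $s$-th piece on the $\delta$-ball around $y_s$, so $h_{T+1}=h_s$ there. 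Hence all values and subdifferentials already revealed are those of the final function, however $y_s$ was produced. The last piece is chosen after the output $y_{T+1}$ is fixed, which gives $h_{T+1}(y_{T+1})\ge -2G_\y T\delta$. Meanwhile $\min_{\cY}h_{T+1}\le -G_\y R_{\cY}/(T+1)^{1/2}$, so the difference is exactly $G_\y R_{\cY}/(2(T+1)^{1/2})$, with no regularization term needed.
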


The proofs, presented in the appendix, closely follow the construction of \citet{nesterov2013introductory} and \citet{lan2013complexity}, adapted to the saddle-point setting.

\section{Numerical Results} \label{sec:numerical}
We evaluate the proposed methods on two saddle-point problems: matrix completion with spectral norm fit and robust multiclass classification with hinge loss. We compare the three LMO/PO variants in \cref{alg:lmo-lmo,alg:lmo-po,alg:po-lmo} with the fully projected baseline PO-PO, presented in \cref{alg:po-po} in Appendix~\ref{app:proofs}. Throughout the experiments, the suffix \texttt{Deterministic} means that the oracle returns the full subgradient of the payoff function, while the suffix \texttt{Stochastic} means that the oracle returns an unbiased stochastic estimate. Thus, for instance, \texttt{LMO-LMO (Deterministic)} refers to \cref{alg:lmo-lmo} implemented with the full oracle, whereas \texttt{LMO-LMO (Stochastic)} refers to the same method implemented with the stochastic oracle.
For LMO-based updates, we use simple enclosing sets $\myoverline{\cX} \supseteq \cX$ and $\myoverline{\cY} \supseteq \cY$ chosen so that projection onto them is inexpensive. The explicit projection formulas for these covering sets, together with all remaining implementation details and parameter choices, are provided in Appendix~\ref{app:numerics}. 
In the following, $\|\cdot\|_2$, $\|\cdot\|_{\mathrm{F}}$, $\|\cdot\|_\op$ and $\|\cdot\|_*$ denote the Euclidean norm, Frobenius norm, spectral (operator) norm and nuclear norm, respectively.

\paragraph{ Matrix Completion with Spectral Norm Fit.}
We first consider the bilinear saddle-point formulation of the spectral norm fit problem studied by \citet{juditsky2011solving}
\begin{equation} 
\label{eq:bilinear-MC}
    \min_{\|X\|_* \leq 1} ~ \max_{\|Y\|_* \leq 1} ~ \Tr\left[(\cA(X)-B)^\top Y\right],
\end{equation}
where $X\in\R^{n\times p}$, $Y\in\R^{m\times q}$, $B\in\R^{m\times q}$, and $\mathcal{A}:\R^{n\times p}\to\R^{m\times q}$ is a linear map. In this experiment, we have
\begin{align*}
    \cX = \{ X \in \R^{n \times p}: \|X\|_*\leq 1 \}
    \quad \text{and} \quad
    \cY = \{ Y \in \R^{m \times q}: \|Y\|_* \leq 1 \}.
\end{align*}
For the LMO-based variants, we choose the simple covering sets
\begin{align*}
    \myoverline{\cX} = \{X \in \R^{n\times p}: \|X\|_{\mathrm{F}} \leq 1\}
    \quad \text{and} \quad
    \myoverline{\cY} = \{Y \in \R^{m\times q}:\|Y\|_{\mathrm{F}} \leq 1\}.
\end{align*}
Projection onto these Frobenius balls is just a \emph{rescaling operation} and is therefore computationally cheap; see \cref{app:numerics} for details.
For this problem, we report the (strong) saddle gap, which admits a closed-form expression. Let $\mathcal{A}^*$ denote the adjoint of $\mathcal{A}$. Then, for any pair $(\bar X_T,\bar Y_T)$, the saddle gap
\begin{align*}
    &\max_{\|Y\|_*\leq 1} \Tr\left[(\cA(\bar X_T)-B)^\top Y \right]
    - \min_{\|X\|_*\leq 1} \Tr\left[(\cA(X)-B)^\top \bar Y_T \right] \\
    &= \|\mathcal{A}(\bar X_T)-B\|_{\op} + \|\mathcal{A}^*(\bar Y_T)\|_{\op}
    + \Tr\left[B^\top \bar Y_T \right].
\end{align*}
Since the payoff in \eqref{eq:bilinear-MC} is smooth, we also compare against the smoothing-based projection-free methods of \citet{giang2026projection}, the smoothing-based projection method of \citet{xu2023unified}, and the classical extra-gradient and optimistic gradient descent-ascent methods. These additional comparisons are reported in Appendix~\ref{app:numerics}.

\paragraph{Robust Multiclass Classification with Hinge Loss.}
We next consider a robust multiclass classification problem. Given a training set $\cD^{\mathrm{tr}}=\{(a_i,b_i)\}_{i=1}^n$, with features $a_i\in\R^d$ and labels $b_i\in\{1,\ldots,k\}$, the goal is to learn a predictor matrix $\Theta=[\theta_1,\ldots,\theta_k]^\top\in\R^{k\times d}$. The prediction for a new point $\hat a$ is $\hat b=\arg\max_{j\in[k]}\theta_j^\top \hat a$. We use the multiclass hinge loss $\ell_i(\Theta) = \max_{j\in[k]} \left\{ \one(j\neq b_i)+\theta_j^\top a_i-\theta_{b_i}^\top a_i \right\}.$
To promote low-rank structure, we impose the nuclear norm constraint $\cX=\{\Theta\in\R^{k\times d}:\|\Theta\|_*\leq r\}.$
Following the penalized distributionally robust optimization formulation in \citep{kuhn2025distributionally,ben2007old,gotoh2018robust}, we aim to solve
\begin{align}
\label{eq:numerics-dro}
    \min_{\Theta \in \cX} ~ \max_{y \in \cY} ~
    \frac{1}{n} \sum_{i=1}^{n} y_i \ell_i(\Theta) - \lambda \mathsf D_{\phi} \left( y, \tfrac{1}{n} \one_n \right),
\end{align}
where $\cY=\{y\in\R^n:y\geq 0,\sum_{i=1}^{n}y_i=1\}$ is the probability simplex. We use the Pearson $\chi^2$-divergence $\mathsf D_{\phi} (y,\frac{1}{n} \one_n) := \|ny - \one_n \|_2^2$, which leads to a form of variance regularization \citep{gotoh2018robust,lam2019recovering,duchi2019variance}. For the LMO-based update, we choose
\begin{align*}
    \myoverline{\cX} = \{\Theta \in \R^{k\times d} : \|\Theta\|_{\mathrm{F}} \leq r \}
    \quad \text{and} \quad \myoverline{\cY} = \{y \in \R^n : y \geq 0, \|y\|_2 \leq 1\}.
\end{align*}
Projection onto $\myoverline{\cX}$ is a Frobenius-norm rescaling, while projection onto $\myoverline{\cY}$ is obtained by nonnegative clipping followed by rescaling when necessary; see \cref{app:numerics} for more details.
For this experiment, we report the primal objective value as computing the full saddle gap is expensive at this scale, requiring solving both primal and dual problems. The primal objective is a tractable proxy and empirically reflects the same convergence behavior as the saddle-gap metric.

\paragraph{Summary of Results.}
Figure~\ref{fig:Experiment} reports the convergence of the LMO and PO variants in deterministic and stochastic settings. In the deterministic experiments, methods that avoid nuclear-norm projections perform best: \texttt{LMO-LMO (Deterministic)} achieves the fastest decrease and the best final value, with \texttt{LMO-PO (Deterministic)} typically close behind. This is consistent with the fact that an LMO over the nuclear-norm ball only requires a leading singular vector computation, whereas projection requires a substantially more expensive singular value decomposition. The gap is most visible in the classification experiment, where the primal nuclear-norm projection is the main computational bottleneck.
The stochastic experiments show a similar pattern, with additional variability due to sampling. \texttt{LMO-LMO (Stochastic)} performs best on matrix completion, while \texttt{LMO-PO (Stochastic)} is especially competitive on classification. For the classification task, LMO and PO updates have comparable computational cost over $\cY$. When dual projections are feasible, they can therefore provide additional stability without introducing a significant computational overhead, which helps explain the strong performance of \texttt{LMO-PO (Stochastic)} in terms of the primal objective. By contrast, using a projection on the primal nuclear-norm constraint remains costly and leads to slower progress in wall-clock time. Overall, the results support the main message of the paper: replacing costly projections with LMOs can yield clear computational gains without sacrificing the optimal stochastic rate.
\vspace{-1em}
\begin{figure*}[!b]
    \centering
    \begin{subfigure}[b]{0.495\textwidth}
        \centering
        \includegraphics[width =1\linewidth]{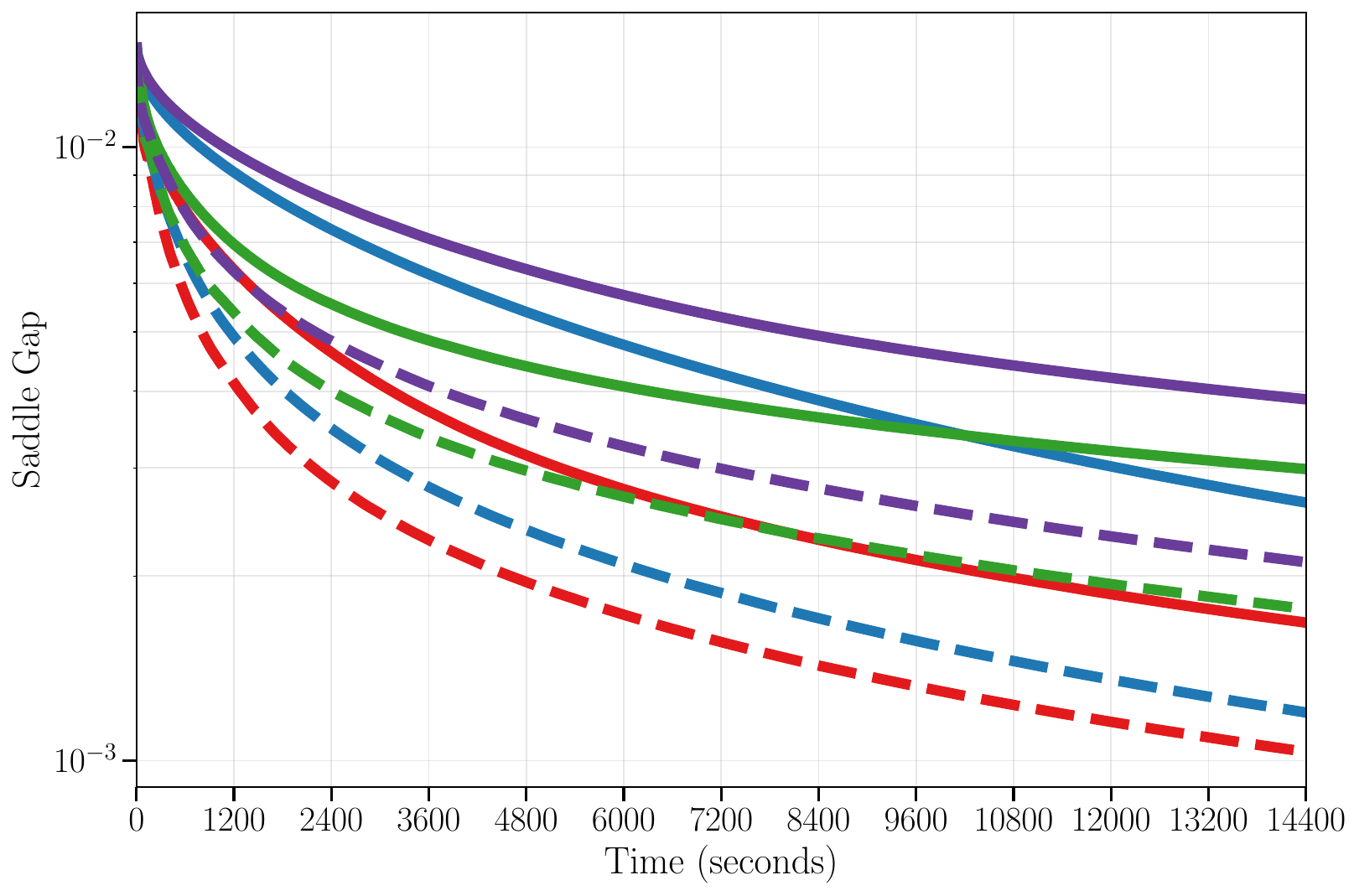}
        \label{subfig:matrix} \vspace{-1em}
    \end{subfigure}
    \hfill
    \begin{subfigure}[b]{0.495\textwidth}
        \centering
        \includegraphics[ width =1\linewidth]{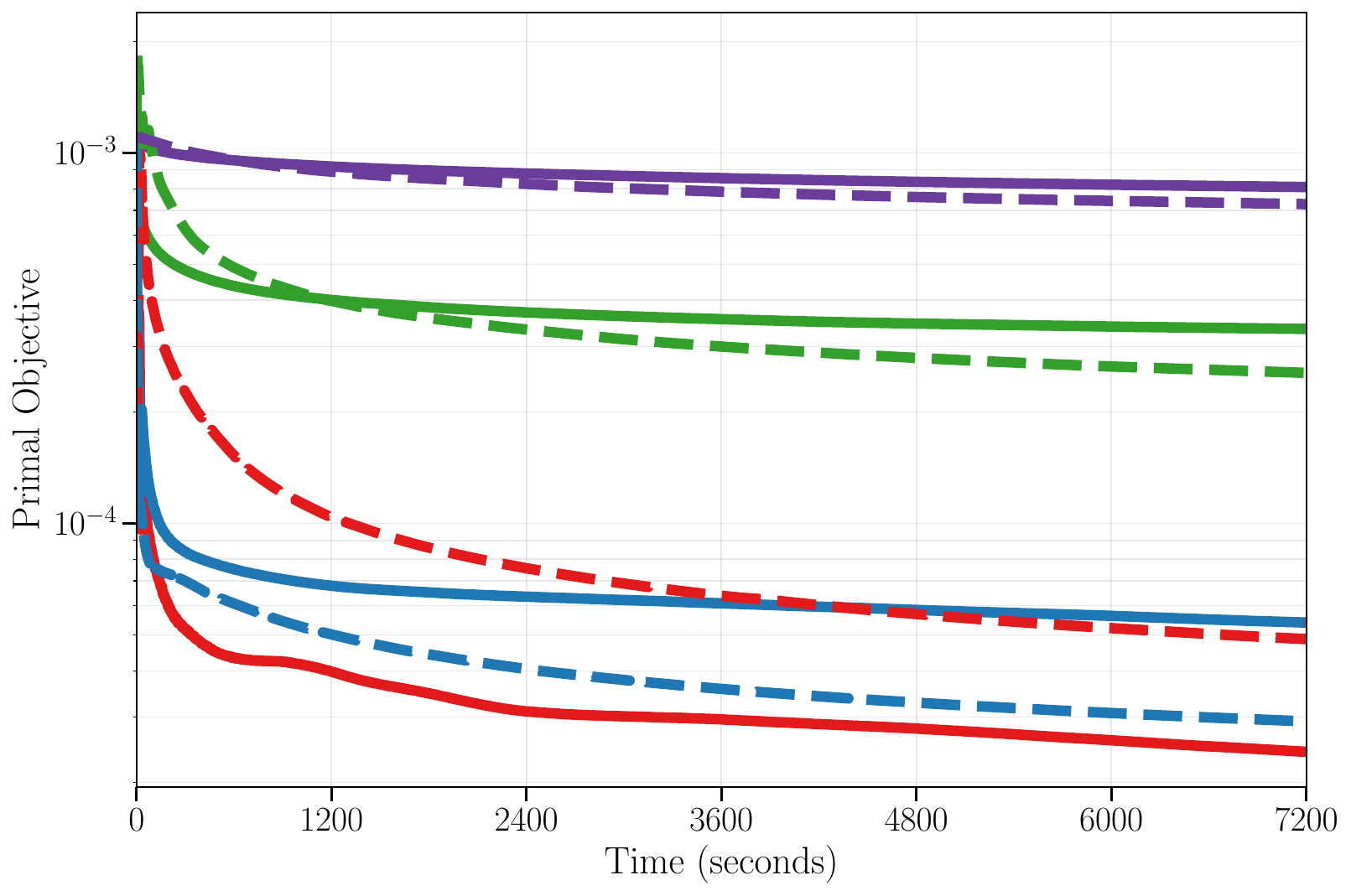}
        \label{subfig:hinge} \vspace{-1em}
    \end{subfigure} 
    \begin{subfigure}[b]{0.95\textwidth}
    \includegraphics[width=\linewidth]{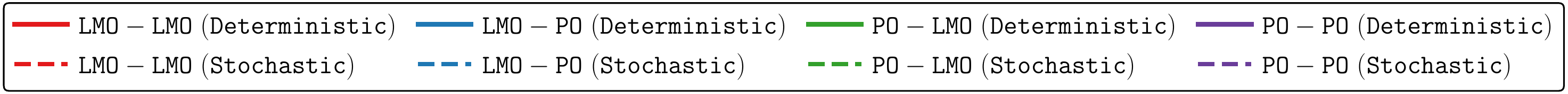}
    \end{subfigure}
    \caption{Convergence comparison of deterministic and stochastic LMO/PO variants.}
    \label{fig:Experiment}
\end{figure*}

\bibliographystyle{myabbrvnat}
\bibliography{bib} 

@inproceedings{boroun2023projection,
  title={Projection-free methods for solving nonconvex-concave saddle point problems},
  author={Boroun, Morteza and Yazdandoost Hamedani, Erfan and Jalilzadeh, Afrooz},
  booktitle={Advances in Neural Information Processing Systems},
  pages={53844--53856},
  year={2023}
}

@book{bertsekas1999nonlinear,
  title={Nonlinear Programming},
  author={Bertsekas, Dimitri},
  year={1999},
  publisher={Athena Scientific}
}

@article{nesterov2005smooth,
  title={Smooth minimization of non-smooth functions},
  author={Nesterov, Yu},
  journal={Mathematical Programming},
  volume={103},
  number={1},
  pages={127--152},
  year={2005},
  publisher={Springer}
}

@article{nemirovski2004prox,
  title={{Prox-method with rate of convergence $O(1/t)$ for variational inequalities with Lipschitz continuous monotone operators and smooth convex-concave saddle point problems}},
  author={Nemirovski, Arkadi},
  journal={SIAM Journal on Optimization},
  volume={15},
  number={1},
  pages={229--251},
  year={2004},
  publisher={SIAM}
}

@article{xu2023unified,
  title={A unified single-loop alternating gradient projection algorithm for nonconvex--concave and convex--nonconcave minimax problems},
  author={Xu, Zi and Zhang, Huiling and Xu, Yang and Lan, Guanghui},
  journal={Mathematical Programming},
  volume={201},
  number={1},
  pages={635--706},
  year={2023},
  publisher={Springer}
}

@book{nesterov2013introductory,
  title={Introductory Lectures on Convex Optimization: A Basic Course},
  author={Nesterov, Yurii},
  year={2013},
  publisher={Springer}
}

@article{nesterov1983method,
  title={{A method for solving the convex programming problem with convergence rate $O(1/k^2)$}},
  author={Nesterov, Yurii},
  journal={Dokl akad nauk Sssr},
  volume={27},
  number={2},
  pages={372--376},
  year={1983}
}

@inproceedings{jaggi2013revisiting,
  title={Revisiting {F}rank-{W}olfe: {P}rojection-free sparse convex optimization},
  author={Jaggi, Martin},
  booktitle={International Conference on Machine Learning},
  pages={427--435},
  year={2013}
}

@article{korpelevich1976extragradient,
  title={The extragradient method for finding saddle points and other problems},
  author={Korpelevich, Galina M},
  journal={Matecon},
  volume={12},
  pages={747--756},
  year={1976}
}

@article{malitsky2015projected,
  title={Projected reflected gradient methods for monotone variational inequalities},
  author={Malitsky, Yu},
  journal={SIAM Journal on Optimization},
  volume={25},
  number={1},
  pages={502--520},
  year={2015},
  publisher={SIAM}
}

@article{tseng2000modified,
  title={A modified forward-backward splitting method for maximal monotone mappings},
  author={Tseng, Paul},
  journal={SIAM Journal on Control and Optimization},
  volume={38},
  number={2},
  pages={431--446},
  year={2000},
  publisher={SIAM}
}

@article{nesterov2007dual,
  title={Dual extrapolation and its applications to solving variational inequalities and related problems},
  author={Nesterov, Yurii},
  journal={Mathematical Programming},
  volume={109},
  number={2},
  pages={319--344},
  year={2007},
  publisher={Springer}
}

@article{nedic2009subgradient,
  title={Subgradient methods for saddle-point problems},
  author={Nedi{\'c}, Angelia and Ozdaglar, Asuman},
  journal={Journal of Optimization Theory and Applications},
  volume={142},
  number={1},
  pages={205--228},
  year={2009},
  publisher={Springer}
}

@article{chambolle2011first,
  title={A first-order primal-dual algorithm for convex problems with applications to imaging},
  author={Chambolle, Antonin and Pock, Thomas},
  journal={Journal of Mathematical Imaging and Vision},
  volume={40},
  number={1},
  pages={120--145},
  year={2011},
  publisher={Springer}
}

@book{braun2022conditional,
  title={Conditional Gradient Methods: From Core Principles to AI Applications},
  author={Braun, G{\'a}bor and Carderera, Alejandro and Combettes, Cyrille W and Hassani, Hamed and Karbasi, Amin and Mokhtari, Aryan and Pokutta, Sebastian},
  publisher={SIAM},
  year={2025}
}

@article{guzman2015lower,
  title={On lower complexity bounds for large-scale smooth convex optimization},
  author={Guzm{\'a}n, Crist{\'o}bal and Nemirovski, Arkadi},
  journal={Journal of Complexity},
  volume={31},
  number={1},
  pages={1--14},
  year={2015},
  publisher={Elsevier}
}

@article{ouyang2021lower,
  title={Lower complexity bounds of first-order methods for convex-concave bilinear saddle-point problems},
  author={Ouyang, Yuyuan and Xu, Yangyang},
  journal={Mathematical Programming},
  volume={185},
  number={1},
  pages={1--35},
  year={2021},
  publisher={Springer}
}

@inproceedings{nouiehed2019solving,
  title={Solving a class of non-convex min-max games using iterative first order methods},
  author={Nouiehed, Maher and Sanjabi, Maziar and Huang, Tianjian and Lee, Jason D and Razaviyayn, Meisam},
  booktitle={Advances in Neural Information Processing Systems},
  pages={14934--14942},
  year={2019}
}

@inproceedings{gidel2017frank,
  title={{Frank-Wolfe} algorithms for saddle point problems},
  author={Gidel, Gauthier and Jebara, Tony and Lacoste-Julien, Simon},
  booktitle={International Conference on Artificial Intelligence and Statistics},
  pages={362--371},
  year={2017}
}

@inproceedings{chen2020efficient,
  title={Efficient projection-free algorithms for saddle point problems},
  author={Chen, Cheng and Luo, Luo and Zhang, Weinan and Yu, Yong},
  booktitle={Advances in Neural Information Processing Systems},
  pages={10799--10808},
  year={2020}
}

@inproceedings{chen2024last,
  title={Last-iterate convergence for generalized {F}rank-{W}olfe in monotone variational inequalities},
  author={Chen, Zaiwei and Mazumdar, Eric},
  booktitle={Advances in Neural Information Processing Systems},
  pages={115440--115467},
  year={2024}
}

@inproceedings{he2015semi,
  title={Semi-proximal mirror-prox for nonsmooth composite minimization},
  author={He, Niao and Harchaoui, Zaid},
  booktitle={Advances in Neural Information Processing Systems},
  pages={3411--3419},
  year={2015}
}

@inproceedings{suggala2020follow,
  title={Follow the perturbed leader: {O}ptimism and fast parallel algorithms for smooth minimax games},
  author={Suggala, Arun and Netrapalli, Praneeth},
  booktitle={Advances in Neural Information Processing Systems},
  pages={22316--22326},
  year={2020}
}

@inproceedings{kolmogorov2021one,
  title={One-sided {F}rank-{W}olfe algorithms for saddle problems},
  author={Kolmogorov, Vladimir and Pock, Thomas},
  booktitle={International Conference on Machine Learning},
  pages={5665--5675},
  year={2021}
}

@article{popov1980modification,
  title={A modification of the {Arrow-Hurwicz} method for search of saddle points},
  author={Popov, Leonid Denisovich},
  journal={Mathematical Notes of the Academy of Sciences of the USSR},
  volume={28},
  number={5},
  pages={845--848},
  year={1980},
  publisher={Springer}
}

@phdthesis{hammond1984solving,
  title={Solving Asymmetric Variational Inequality Problems and Systems of Equations with Generalized Nonlinear Programming Algorithms},
  author={Hammond, Janice H},
  year={1984},
  school={Massachusetts Institute of Technology}
}

@article{lan2013complexity,
  title={The complexity of large-scale convex programming under a linear optimization oracle},
  author={Lan, Guanghui},
  journal={arXiv:1309.5550},
  year={2013}
}

@article{lan2016conditional,
  title={Conditional gradient sliding for convex optimization},
  author={Lan, Guanghui and Zhou, Yi},
  journal={SIAM Journal on Optimization},
  volume={26},
  number={2},
  pages={1379--1409},
  year={2016},
  publisher={SIAM}
}

@article{lan2016gradient,
  title={Gradient sliding for composite optimization},
  author={Lan, Guanghui},
  journal={Mathematical Programming},
  volume={159},
  number={1},
  pages={201--235},
  year={2016},
  publisher={Springer}
}

@article{baghbadorani2025frank,
  title={A {F}rank-{W}olfe Algorithm for Strongly Monotone Variational Inequalities},
  author={Baghbadorani, Reza Rahimi and Esfahani, Peyman Mohajerin and Grammatico, Sergio},
  journal={Operations Research Letters},
  pages={107388},
  year={2025},
  publisher={Elsevier}
}

@inproceedings{yurtsever2019conditional,
  title={A conditional-gradient-based augmented {L}agrangian framework},
  author={Yurtsever, Alp and Fercoq, Olivier and Cevher, Volkan},
  booktitle={International Conference on Machine Learning},
  pages={7272--7281},
  year={2019}
}

@article{asgari2024nonsmooth,
  title={Nonsmooth projection-free optimization with functional constraints},
  author={Asgari, Kamiar and Neely, Michael J},
  journal={Computational Optimization and Applications},
  volume={89},
  number={3},
  pages={927--975},
  year={2024},
  publisher={Springer}
}

@article{asgari2022projection,
  title={Projection-free non-smooth convex programming},
  author={Asgari, Kamiar and Neely, Michael J},
  journal={arXiv:2208.05127},
  year={2022}
}

@article{lan2021conditional,
    title={Conditional gradient methods for convex optimization with general affine and nonlinear constraints},
    author={Lan, Guanghui and Romeijn, Edwin and Zhou, Zhiqiang},
    journal={SIAM Journal on Optimization},
    volume={31},
    number={3},
    pages={2307--2339},
    year={2021},
    publisher={SIAM}
}

@article{duchi2019variance,
  title={Variance-based regularization with convex objectives},
  author={Duchi, John and Namkoong, Hongseok},
  journal={Journal of Machine Learning Research},
  volume={20},
  number={68},
  pages={1--55},
  year={2019}
}

@article{kuhn2025distributionally,
  title={Distributionally robust optimization},
  author={Kuhn, Daniel and Shafiee, Soroosh and Wiesemann, Wolfram},
  journal={Acta Numerica},
  volume={34},
  pages={579--804},
  year={2025},
  publisher={Cambridge University Press}
}

@article{ben2007old,
  title={An old-new concept of convex risk measures: {T}he optimized certainty equivalent},
  author={Ben-Tal, Aharon and Teboulle, Marc},
  journal={Mathematical Finance},
  volume={17},
  number={3},
  pages={449--476},
  year={2007}
}

@article{lam2019recovering,
  title={Recovering best statistical guarantees via the empirical divergence-based distributionally robust optimization},
  author={Lam, Henry},
  journal={Operations Research},
  volume={67},
  number={4},
  pages={1090--1105},
  year={2019}
}

@article{gotoh2018robust,
  title={Robust empirical optimization is almost the same as mean--variance optimization},
  author={Gotoh, Jun-ya and Kim, Michael Jong and Lim, Andrew EB},
  journal={Operations Research Letters},
  volume={46},
  number={4},
  pages={448--452},
  year={2018},
  publisher={Elsevier}
}

@article{nesterov2009primal,
  title={Primal-dual subgradient methods for convex problems},
  author={Nesterov, Yurii},
  journal={Mathematical Programming},
  volume={120},
  number={1},
  pages={221--259},
  year={2009},
  publisher={Springer}
}

@book{shor1985minimization,
  title={Minimization Methods for Non-Differentiable Functions},
  author={Shor, Naum Zuselevich},
  year={1985},
  publisher={Springer}
}

@article{robbins1951stochastic,
  title={A stochastic approximation method},
  author={Robbins, Herbert and Monro, Sutton},
  journal={The Annals of Mathematical Statistics},
  pages={400--407},
  year={1951},
  publisher={JSTOR}
}

@article{nemirovski2009robust,
  title={Robust stochastic approximation approach to stochastic programming},
  author={Nemirovski, Arkadi and Juditsky, Anatoli and Lan, Guanghui and Shapiro, Alexander},
  journal={SIAM Journal on Optimization},
  volume={19},
  number={4},
  pages={1574--1609},
  year={2009},
  publisher={SIAM}
}

@article{giang2026projection,
  title={Projection-Free Algorithms for Minimax Problems},
  author={Giang-Tran, Khanh-Hung and Shafiee, Soroosh and Ho-Nguyen, Nam},
  journal={arXiv:2603.29870},
  year={2026}
}

@inproceedings{thekumparampil2020projection,
  title={Projection efficient subgradient method and optimal nonsmooth {F}rank-{W}olfe method},
  author={Thekumparampil, Kiran K and Jain, Prateek and Netrapalli, Praneeth and Oh, Sewoong},
  booktitle={Advances in Neural Information Processing Systems},
  pages={12211--12224},
  year={2020}
}

@inproceedings{thekumparampil2020optimal,
  title={Optimal nonsmooth {F}rank-{W}olfe method for stochastic regret minimization},
  author={Thekumparampil, Kiran Koshy and Jain, Prateek and Netrapalli, Praneeth and Oh, Sewoong},
  booktitle={Optimization for Machine Learning Workshop},
  year={2020}
}

@inproceedings{hazan2012projection,
  title={Projection-free online learning},
  author={Hazan, Elad and Kale, Satyen},
  booktitle={International Conference on Machine Learning},
  pages={1843--1850},
  year={2012}
}

@inproceedings{hazan2020faster,
  title={Faster projection-free online learning},
  author={Hazan, Elad and Minasyan, Edgar},
  booktitle={Conference on Learning Theory},
  pages={1877--1893},
  year={2020}
}

@article{white1993extension,
  title={Extension of the {F}rank-{W}olfe algorithm to concave nondifferentiable objective functions},
  author={White, Douglas J},
  journal={Journal of Optimization Theory and Applications},
  volume={78},
  number={2},
  pages={283--301},
  year={1993},
  publisher={Springer}
}

@article{ravi2019deterministic,
  title={A deterministic nonsmooth {F}rank {W}olfe algorithm with coreset guarantees},
  author={Ravi, Sathya N and Collins, Maxwell D and Singh, Vikas},
  journal={INFORMS Journal on Optimization},
  volume={1},
  number={2},
  pages={120--142},
  year={2019},
  publisher={INFORMS}
}

@article{cheung2017nonsmooth,
  title={Nonsmooth {F}rank-{W}olfe using uniform affine approximations},
  author={Cheung, Edward and Li, Yuying},
  journal={arXiv:1710.05776},
  year={2017}
}

@inproceedings{lu2023projection,
  title={Projection-free adaptive regret with membership oracles},
  author={Lu, Zhou and Brukhim, Nataly and Gradu, Paula and Hazan, Elad},
  booktitle={Conference on Algorithmic Learning Theory},
  pages={1055--1073},
  year={2023}
}

@article{grimmer2024radialI,
  title={Radial duality part {I}: {Foundations}},
  author={Grimmer, Benjamin},
  journal={Mathematical Programming},
  volume={205},
  number={1},
  pages={33--68},
  year={2024},
  publisher={Springer}
}

@article{grimmer2024radialII,
  title={Radial duality part {II}: {A}pplications and algorithms},
  author={Grimmer, Benjamin},
  journal={Mathematical Programming},
  volume={205},
  number={1},
  pages={69--105},
  year={2024},
  publisher={Springer}
}

@article{tao2019strength,
  title={The strength of {N}esterov's extrapolation in the individual convergence of nonsmooth optimization},
  author={Tao, Wei and Pan, Zhisong and Wu, Gaowei and Tao, Qing},
  journal={IEEE Transactions on Neural Networks and Learning Systems},
  volume={31},
  number={7},
  pages={2557--2568},
  year={2019},
  publisher={IEEE}
}

@book{nemirovski1983problem,
  title={Problem Complexity and Method Efficiency in Optimization},
  author={Nemirovsky, Arkadi Semenovi{\v{c}} and Yudin, David Borisovich},
  year={1983},
  publisher={Wiley}
}

@article{hough2026asymptotic,
  title={Asymptotic Convergence of the {F}rank-{W}olfe Algorithm for Monotone Variational Inequalities},
  author={Hough, Matthew},
  journal={arXiv:2603.12104},
  year={2026}
}

@article{juditsky2016solving,
  title={Solving variational inequalities with monotone operators on domains given by linear minimization oracles},
  author={Juditsky, Anatoli and Nemirovski, Arkadi},
  journal={Mathematical Programming},
  volume={156},
  number={1},
  pages={221--256},
  year={2016},
  publisher={Springer}
}

@article{cox2014dual,
  title={Dual subgradient algorithms for large-scale nonsmooth learning problems},
  author={Cox, Bruce and Juditsky, Anatoli and Nemirovski, Arkadi},
  journal={Mathematical Programming},
  volume={148},
  number={1},
  pages={143--180},
  year={2014},
  publisher={Springer}
}

@article{harchaoui2015conditional,
  title={Conditional gradient algorithms for norm-regularized smooth convex optimization},
  author={Harchaoui, Zaid and Juditsky, Anatoli and Nemirovski, Arkadi},
  journal={Mathematical Programming},
  volume={152},
  number={1},
  pages={75--112},
  year={2015},
  publisher={Springer}
}

@inproceedings{locatello2019stochastic,
  title={Stochastic {F}rank-{W}olfe for composite convex minimization},
  author={Locatello, Francesco and Yurtsever, Alp and Fercoq, Olivier and Cevher, Volkan},
  booktitle={Advances in Neural Information Processing Systems},
  pages={14291--14301},
  year={2019}
}

@article{alacaoglu2023beyond,
  title={Beyond the golden ratio for variational inequality algorithms},
  author={Alacaoglu, Ahmet and B{\"o}hm, Axel and Malitsky, Yura},
  journal={Journal of Machine Learning Research},
  volume={24},
  number={172},
  pages={1--33},
  year={2023}
}

@article{malitsky2020golden,
  title={Golden ratio algorithms for variational inequalities},
  author={Malitsky, Yura},
  journal={Mathematical Programming},
  volume={184},
  number={1},
  pages={383--410},
  year={2020},
  publisher={Springer}
}

@inproceedings{daskalakis2018training,
  title={Training {GANs} with Optimism},
  author={Daskalakis, Constantinos and Ilyas, Andrew and Syrgkanis, Vasilis and Zeng, Haoyang},
  booktitle={International Conference on Learning Representations},
  year={2018}
}

@article{malitsky2020forward,
  title={A forward-backward splitting method for monotone inclusions without cocoercivity},
  author={Malitsky, Yura and Tam, Matthew K},
  journal={SIAM Journal on Optimization},
  volume={30},
  number={2},
  pages={1451--1472},
  year={2020},
  publisher={SIAM}
}

@article{juditsky2011solving,
  title={Solving variational inequalities with stochastic mirror-prox algorithm},
  author={Juditsky, Anatoli and Nemirovski, Arkadi and Tauvel, Claire},
  journal={Stochastic Systems},
  volume={1},
  number={1},
  pages={17--58},
  year={2011},
  publisher={INFORMS}
}

@inproceedings{agarwal2009information,
  title={Information-theoretic lower bounds on the oracle complexity of convex optimization},
  author={Agarwal, Alekh and Bartlett, Peter and Ravikumar, Pradeep and Wainwright, Martin J},
  booktitle={Advances in Neural Information Processing Systems},
  pages={1--9},
  year={2009}
}

@inproceedings{foster2019complexity,
  title={The complexity of making the gradient small in stochastic convex optimization},
  author={Foster, Dylan J and Sekhari, Ayush and Shamir, Ohad and Srebro, Nathan and Sridharan, Karthik and Woodworth, Blake},
  booktitle={Conference on Learning Theory},
  pages={1319--1345},
  year={2019},
  organization={PMLR}
}

@article{grimmer2026lower,
  title={Lower Bounds for Linear Minimization Oracle Methods Optimizing over Strongly Convex Sets},
  author={Grimmer, Benjamin and Liu, Ning},
  journal={arXiv:2602.22608},
  year={2026}
}

@article{halbey2026lower,
  title={Lower Bounds for {F}rank-{W}olfe on Strongly Convex Sets},
  author={Halbey, Jannis and Deza, Daniel and Zimmer, Max and Roux, Christophe and Stellato, Bartolomeo and Pokutta, Sebastian},
  journal={arXiv:2602.04378},
  year={2026}
}

\appendix
\numberwithin{equation}{section}
\makeatletter
\@addtoreset{proposition}{section}
\def\theproposition{\thesection.\arabic{proposition}}
\@addtoreset{theorem}{section}
\def\thetheorem{\thesection.\arabic{theorem}}
\@addtoreset{lemma}{section}
\def\thelemma{\thesection.\arabic{lemma}}
\makeatother

\section{Proofs}
\label{app:proofs}
\subsection{Proofs of \texorpdfstring{Proposition~\protect\ref{prop:generic-lmo}}{Proposition~1}}

For the convenience, we repeat the statement of \cref{prop:generic-lmo} below.

\begin{proposition}[Restatement of Proposition~\ref{prop:generic-lmo}]
\label{prop:generic-lmo-restate}
    Suppose Assumptions~\ref{ass:extended} and \ref{ass:stochastic-oracle} hold.
    \begin{enumerate}[label=(\roman*),leftmargin=2em]
        \item \label{prop:generic-lmo:primal}
        Let $\{v_t,x_t,\lambda_t\}_{t \geq 1}$ be the primal sequences generated by Algorithms~\ref{alg:lmo-lmo} or \ref{alg:lmo-po} with parameters satisfying $\alpha_t \leq \alpha_{t+1}$ and $\eta_t \geq \eta_{t+1}$ for all $t \geq 1$. 
        Define the dual evaluation sequence $\{z_t\}_{t \geq 1} = \{ u_t \}_{t \geq 1}$ for \cref{alg:lmo-lmo} and $\{z_t\}_{t \geq 1} = \{ y_t \}_{t \geq 1}$ for \cref{alg:lmo-po}.
        Then, for every $T \geq 2$, we have 
        \begin{equation}
        \label{eq:generic-primal-lmo-2}
        \begin{aligned}
            &\E\left[\frac{\eta_{T-1}}{2} \left\|\frac{\lambda_T}{\eta_T}\right\|_2^2 + \max_{x \in \cX} \; \sum_{t = 1}^{T} \big( f(v_t,z_t) - f(x,z_t) \big) \right]  \\
            &\leq 2\left(\alpha_{T-1} +\sum_{t = 1}^{T-1}\eta_t\right) R_{\cX}^2+\frac{G_\x^2}{2\alpha_{T-1}}+\sum_{t = 1}^{T-1}\frac{G_\x^2}{2 \alpha_t}+ 2G_\x R_{\cX} T^{1/2}. 
        \end{aligned}  
        \end{equation}
    
        \item \label{prop:generic-lmo:dual}
        Let $\{u_t,y_t,\mu_t\}_{t \geq 1}$ be the dual sequences generated by Algorithms~\ref{alg:lmo-lmo} or \ref{alg:po-lmo} with parameters satisfying $\beta_t \leq \beta_{t+1}$ and $\tau_t \geq \tau_{t+1}$ for all $t \geq 1$.
        Define the primal evaluation sequence $\{w_t\}_{t \geq 1} = \{ v_t \}_{t \geq 1}$ for \cref{alg:lmo-lmo} and $\{w_t\}_{t \geq 1} = \{ x_t \}_{t \geq 1}$ for \cref{alg:po-lmo}.
        Then, for every $T \geq 2$, we have 
        \begin{equation}
        \label{eq:generic-dual-lmo-2}
        \begin{aligned}
            &\E\left[\frac{\tau_{T-1}}{2} \left\|\frac{\mu_T}{\tau_T}\right\|_2^2 + \max_{y \in \cY} \sum_{t = 1}^{T}  \big( f(w_t, y) - f(w_t, u_t) \big) \right]\\
            &\leq 2\left(\beta_{T-1} +\sum_{t = 1}^{T-1}\tau_t\right) R_{\cY}^2+\frac{G_\y^2}{2\beta_{T-1}}+\sum_{t = 1}^{T-1}\frac{G_\y^2}{2 \beta_t}+ 2G_\y R_{\cY}T^{1/2}.
        \end{aligned}
        \end{equation}
    \end{enumerate}
\end{proposition}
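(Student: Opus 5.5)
The plan is a primal–dual drift argument. The projection step on $v_t$ is treated as online subgradient descent over $\myoverline{\CX}$ with an extra linear "penalty" $\lambda_t$. The LMO step and the $\lambda$-recursion are treated as a virtual queue whose quadratic potential absorbs that penalty. We prove only part~\ref{prop:generic-lmo:primal}; part~\ref{prop:generic-lmo:dual} is symmetric.

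\textbf{Notation.} Set $\Lambda_t := \lambda_t/\eta_t$, so that $\Lambda_{t+1} = \Lambda_t + v_{t+1} - x_{t+1}$ and $\Lambda_1 = 0$. Write $\tilde g_t := \E[g^\x_t \mid \mathcal F_t] \in \partial f_{z_t}(v_t)$ and $\xi_t := g^\x_t - \tilde g_t$, where $\mathcal F_t$ is the history up to the oracle call at time $t$. For any (possibly random) $x \in \cX$, convexity gives
\[
\sum_{t=1}^T \bigl(f(v_t,z_t)-f(x,z_t)\bigr) \le \sum_{t=1}^T (g^\x_t)^\top(v_t-x) - \sum_{t=1}^T \xi_t^\top(v_t-x).
\]

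\textbf{Step 1 (projection inequality).} For $t \le T-1$, the three-point inequality for $\PO_{\myoverline{\CX}}$, applied with comparator $x \in \cX \subseteq \myoverline{\CX}$, gives
\[
(\lambda_t+g^\x_t)^\top(v_{t+1}-x) \le \tfrac{\alpha_t}{2}\bigl(\|v_t-x\|^2-\|v_{t+1}-x\|^2-\|v_{t+1}-v_t\|^2\bigr).
\]
Writing $(g^\x_t)^\top(v_t-x) = (g^\x_t)^\top(v_{t+1}-x) + (g^\x_t)^\top(v_t-v_{t+1})$ and using Young's inequality on the second term yields
\[
(g^\x_t)^\top(v_t-x) \le \tfrac{\alpha_t}{2}\bigl(\|v_t-x\|^2-\|v_{t+1}-x\|^2\bigr) + \tfrac{\|g^\x_t\|^2}{2\alpha_t} - \lambda_t^\top(v_{t+1}-x).
\]
We telescope the distance terms using $\alpha_t \le \alpha_{t+1}$ and $\|v-x\| \le R_\cX$, which gives a bound of the form $\alpha_{T-1}\cdot O(R_\cX^2)$. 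The last summand $t=T$ is handled the same way, with a one-step bound on $(g^\x_T)^\top(v_T-x)$ contributing $G_\x^2/(2\alpha_{T-1})$ plus another $O(\alpha_{T-1}R_\cX^2)$.

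\textbf{Step 2 (LMO and queue drift).} This is the key step. Since $x_{t+1} = \LMO_\cX(-\lambda_t)$ maximizes $\lambda_t^\top x$ over $\cX$, we have $-\lambda_t^\top(v_{t+1}-x) \le -\lambda_t^\top(v_{t+1}-x_{t+1})$. Expanding the recursion,
\[
\tfrac{\eta_t}{2}\|\Lambda_{t+1}\|^2 - \tfrac{\eta_t}{2}\|\Lambda_t\|^2 = \lambda_t^\top(v_{t+1}-x_{t+1}) + \tfrac{\eta_t}{2}\|v_{t+1}-x_{t+1}\|^2,
\]
so that $-\lambda_t^\top(v_{t+1}-x) \le \tfrac{\eta_t}{2}\bigl(\|\Lambda_t\|^2-\|\Lambda_{t+1}\|^2\bigr) + \tfrac{\eta_t}{2}R_\cX^2$. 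Summing over $t \le T-1$ and using $\eta_t \ge \eta_{t+1}$, the cross terms $\tfrac{\eta_t-\eta_{t-1}}{2}\|\Lambda_t\|^2$ are nonpositive. What remains is $-\tfrac{\eta_{T-1}}{2}\|\Lambda_T\|^2 + \sum_{t\le T-1}\tfrac{\eta_t}{2}R_\cX^2$, which we move to the left-hand side. This produces exactly the potential term in \eqref{eq:generic-primal-lmo-2}. The crucial point is that the $\lambda_t$ term in the projection inequality cancels against the queue drift, independently of the comparator $x$.

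\textbf{Step 3 (expectation with the max inside).} The bounds from Steps 1–2 hold pathwise and uniformly in $x \in \cX$, so we take $\max_x$ and then expectations.
\begin{itemize}
\item For the terms $\|g^\x_t\|^2/(2\alpha_t)$, we use $\E\|g^\x_t\|^2 \le G_\x^2$.
\item For the noise, we split $-\xi_t^\top(v_t-x) = -\xi_t^\top(v_t-x_1) + \xi_t^\top(x-x_1)$. The first piece has zero mean because $v_t - x_1$ is $\mathcal F_t$-measurable.
\item For the second piece, $\max_{x\in\cX}\bigl(\sum_t\xi_t\bigr)^\top(x-x_1) \le R_\cX\bigl\|\sum_t\xi_t\bigr\|$. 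By orthogonality of martingale increments and Jensen, its expectation is at most $R_\cX\bigl(\sum_t\E\|\xi_t\|^2\bigr)^{1/2} \le G_\x R_\cX T^{1/2}$, since $\E\|\xi_t\|^2 \le \E\|g^\x_t\|^2$.
\end{itemize}
The constant $2$ in front of $G_\x R_\cX T^{1/2}$ leaves slack, which absorbs looseness in how the last iterate is handled.

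\textbf{Expected obstacles.} The main obstacle is Step 2: choosing the potential $\tfrac{\eta_t}{2}\|\lambda_t/\eta_t\|^2$ and the time index ($\eta_t$ rather than $\eta_{t+1}$) so that the telescoping works with decreasing $\eta_t$. A secondary subtlety is Step 3, where the maximizing $x$ is random; this is why the noise must be controlled by the norm of the martingale sum rather than by taking expectations term by term.
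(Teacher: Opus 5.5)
Your proposal is correct and follows essentially the same route as the paper. Both arguments use the three-point inequality for the prox step onto $\myoverline{\CX}$ and the LMO optimality $\lambda_t^\top(x-x_{t+1})\le 0$. Both use the drift identity $\lambda_t^\top(v_{t+1}-x_{t+1})=\tfrac{\eta_t}{2}\bigl(\|\lambda_{t+1}/\eta_{t+1}\|_2^2-\|\lambda_t/\eta_t\|_2^2-\|v_{t+1}-x_{t+1}\|_2^2\bigr)$, telescoped using the monotonicity of $\alpha_t$ and $\eta_t$, then Young's inequality for the $t=T$ term and a martingale bound anchored at $x_1$.

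The one difference is that you use the sharper bound $\|v-x\|_2\le R_{\cX}$ for $v\in\myoverline{\CX}$, $x\in\cX$, which the definition of $R_{\cX}$ allows; the paper uses $2R_{\cX}$. Your constants therefore come out smaller than the stated ones. The factor $2$ in front of $G_\x R_{\cX}T^{1/2}$ is just this slack, not something needed to absorb the last-iterate term, which is already covered by the $\alpha_{T-1}R_{\cX}^2$ budget.
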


The proof uses the following standard Euclidean three-point inequality; see also \citep[Lemma~6]{nesterov2009primal}.

\begin{lemma} \label{lemma:triangle}
    Let $h$ be a convex function defined on a nonempty convex set $\cC \subseteq \R^d$. Given $\alpha > 0$ and $\bar x \in \cC$, let
    $$x' = \argmin_{x \in \cC} \left\{ h(x) + \alpha \| x - \bar{x} \|_2^2 \right\}.$$
    Then, for any $x \in \cC$,
    $$h(x')+\alpha \|x'-\bar{x}\|_2^2 \leq h(x)+ \alpha \|x-\bar{x}\|_2^2 - \alpha \|x-x'\|_2^2.$$
\end{lemma}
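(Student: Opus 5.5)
The approach is the standard first-order optimality argument for a strongly convex proximal step. Write $\phi(x) := h(x) + \alpha\|x-\bar x\|_2^2$. Since $h$ is convex on $\cC$ and $\alpha\|\cdot-\bar x\|_2^2$ is $2\alpha$-strongly convex, $\phi$ is $2\alpha$-strongly convex on $\cC$. I will also use that the minimizer $x'$ exists, which the lemma implicitly assumes through the $\argmin$; strong convexity makes it unique.

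\textbf{Main route (no subgradients).} I avoid subdifferential calculus on $\cC$, whose sum rule may need a constraint qualification. Fix $x \in \cC$ and $s \in (0,1]$, and set $x_s := x' + s(x-x') \in \cC$. Optimality of $x'$ gives $\phi(x') \le \phi(x_s)$. Convexity of $h$ gives $h(x_s) \le (1-s)h(x') + s\,h(x)$. The quadratic admits the exact identity
\begin{equation*}
\|x_s-\bar x\|_2^2 = (1-s)\|x'-\bar x\|_2^2 + s\|x-\bar x\|_2^2 - s(1-s)\|x-x'\|_2^2 .
\end{equation*}

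\textbf{Combining.} Multiplying the identity by $\alpha$ and adding the bound on $h(x_s)$ gives
\begin{equation*}
\phi(x') \le (1-s)\phi(x') + s\bigl(h(x) + \alpha\|x-\bar x\|_2^2\bigr) - \alpha s(1-s)\|x-x'\|_2^2 .
\end{equation*}
Subtracting $(1-s)\phi(x')$ and dividing by $s>0$ yields
\begin{equation*}
\phi(x') \le h(x) + \alpha\|x-\bar x\|_2^2 - \alpha(1-s)\|x-x'\|_2^2 .
\end{equation*}
Letting $s \downarrow 0$ gives exactly the claimed inequality.

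\textbf{Main obstacle.} The only delicate point is justifying this limit, and it is trivial: the bound holds for every $s\in(0,1]$ and its right-hand side is continuous in $s$. Division by $s$ is safe because $s>0$.

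\textbf{Subgradient alternative.} One could instead take $g \in \partial h(x')$ with $-g - 2\alpha(x'-\bar x) \in N_{\cC}(x')$. One then expands $\|x-\bar x\|_2^2 = \|x-x'\|_2^2 + \|x'-\bar x\|_2^2 + 2\langle x-x', x'-\bar x\rangle$ and uses $h(x) \ge h(x') + \langle g, x-x'\rangle$. This route requires the sum rule for the subdifferential of $h + \delta_{\cC}$, which is why I prefer the argument above.
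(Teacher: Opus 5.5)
Your proof is correct and follows essentially the same route as the paper: compare $\phi(x')$ with $\phi$ at a point of the segment $[x',x]$, use the $2\alpha$-strong convexity inequality for $\phi$, divide by the step parameter and let it tend to $0$. The only difference is that you derive the strong-convexity inequality explicitly from convexity of $h$ and the exact quadratic identity, whereas the paper invokes it directly.
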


\begin{proof}[Proof of \cref{lemma:triangle}]
    We provide a proof for completeness. Define the function 
    $$\phi(x) := h(x) + \alpha \|x-\bar x\|_2^2, \quad x \in \cC.$$
    By $2 \alpha$-strong convexity of $\phi$ on $\cC$, for every $x \in \cC$ and every $\theta \in [0,1]$, we have
    $$ \phi \big((1-\theta)x' + \theta x\big) \leq (1-\theta) \phi(x') + \theta \phi(x) - \alpha \theta(1-\theta) \|x-x'\|_2^2. $$
    Since $x'$ minimizes $\phi$ over $\cC$ and $(1-\theta)x' + \theta x \in \cC$, we also have 
    $$\phi(x') \leq \phi \big((1-\theta)x' + \theta x \big).$$    
    Combining the last two displays yields
    $$ \phi(x') \leq (1-\theta) \phi(x') + \theta \phi(x) - \alpha \theta (1-\theta) \|x-x'\|_2^2, $$
    and therefore, for every $\theta \in (0,1]$, $\phi(x') \leq \phi(x) - \alpha(1-\theta) \|x-x'\|_2^2$. Letting $\theta \downarrow 0$ gives
    $$ \phi(x') \leq \phi(x) - \alpha\|x-x'\|_2^2. $$
    Expanding the definition of $\phi$ completes the proof.    
\end{proof}

We are now ready to present the proof.

\begin{proof}[Proof of \cref{prop:generic-lmo-restate}]
    By symmetry, it suffices to prove part~\ref{prop:generic-lmo:primal}. 
    The proof of part~\ref{prop:generic-lmo:dual} follows by exchanging
    \begin{align*}
        (v_t,x_t,\lambda_t,\alpha_t,\eta_t,\cX,R_{\cX},G_\x,z_t)
        \quad \text{with} \quad
        (u_t,y_t,\mu_t,\beta_t,\tau_t,\cY,R_{\cY},G_\y,w_t),
    \end{align*}
    and replacing $f_{z_t}(\cdot)$ by $(-f)_{w_t}(\cdot)$. 
    We therefore focus on the primal inequality.

    Fix any $x \in \cX$. The update of $v_{t+1}$ can be written as
    \begin{align*}
        v_{t+1}
        = 
        \argmin_{v \in \myoverline{\cY}} \left\{(\lambda_t+g^\x_t)^\top v+\frac{\alpha_t}{2}\|v-v_t\|_2^2\right\}.
    \end{align*}
    Since $x \in \cX \subseteq \myoverline{\cX}$, invoking \cref{lemma:triangle} with $h(v):=(\lambda_t+g^\x_t)^\top v$, $\cC:=\myoverline{\cY}$, $\bar x:=v_t$ and $\alpha:=\alpha_t/2$ gives
    \begin{align*}
        (\lambda_t+g^\x_t)^\top v_{t+1}+\frac{\alpha_t}{2}\|v_{t+1}-v_t\|_2^2 
        \leq (\lambda_t+g^\x_t)^\top x+\frac{\alpha_t}{2}\|x-v_t\|_2^2-\frac{\alpha_t}{2}\|x-v_{t+1}\|_2^2.
    \end{align*}
    Subtracting $\lambda_t^\top x_{t+1}$ and $(g^\x_t)^\top v_t$ from both sides yields
    \begin{align*}
        &\lambda_t^\top(v_{t+1}-x_{t+1})+(g^\x_t)^\top(v_{t+1}-v_t)+\frac{\alpha_t}{2}\|v_{t+1}-v_t\|_2^2 \\
        &\leq \lambda_t^\top(x-x_{t+1})+(g^\x_t)^\top(x-v_t)+\frac{\alpha_t}{2}\|x-v_t\|_2^2-\frac{\alpha_t}{2}\|x-v_{t+1}\|_2^2.
    \end{align*}
    Since $x_{t+1}=\LMO_{\cX}(-\lambda_t)$, we have $\lambda_t^\top(x-x_{t+1})\leq 0$. Hence,
    \begin{align*}
        &\lambda_t^\top(v_{t+1}-x_{t+1})+(g^\x_t)^\top(v_{t+1}-v_t)+\frac{\alpha_t}{2}\|v_{t+1}-v_t\|_2^2 \\
        &\leq (g^\x_t)^\top(x-v_t)+\frac{\alpha_t}{2}\|x-v_t\|_2^2-\frac{\alpha_t}{2}\|x-v_{t+1}\|_2^2.
    \end{align*}
    The update of $\lambda_{t+1}$ implies ${\lambda_{t+1}}/{\eta_{t+1}} = {\lambda_t}/{\eta_t}+v_{t+1}-x_{t+1}$.
    Therefore,
    \begin{align*}
        \lambda_t^\top(v_{t+1}-x_{t+1})
        &=\eta_t \left( \frac{1}{\eta_t} \lambda_t^\top (v_{t+1} - x_{t+1}) \right)
        =\frac{\eta_t}{2}\left(\left\|\frac{\lambda_{t+1}}{\eta_{t+1}}\right\|_2^2-\left\|\frac{\lambda_t}{\eta_t}\right\|_2^2-\|v_{t+1}-x_{t+1}\|_2^2\right).
    \end{align*}
    Substituting this identity into the previous inequality gives
    \begin{align*}
        &\frac{\eta_t}{2}\left(\left\|\frac{\lambda_{t+1}}{\eta_{t+1}}\right\|_2^2-\left\|\frac{\lambda_t}{\eta_t}\right\|_2^2-\|v_{t+1}-x_{t+1}\|_2^2\right) 
        +(g^\x_t)^\top(v_{t+1}-v_t)+\frac{\alpha_t}{2}\|v_{t+1}-v_t\|_2^2 \\
        &\leq (g^\x_t)^\top(x-v_t)+\frac{\alpha_t}{2}\|x-v_t\|_2^2-\frac{\alpha_t}{2}\|x-v_{t+1}\|_2^2.
    \end{align*}
    By Young's inequality,
    \begin{align*}
        (g^\x_t)^\top(v_{t+1}-v_t)+\frac{\alpha_t}{2}\|v_{t+1}-v_t\|_2^2 \geq -\frac{\|g^\x_t\|_2^2}{2\alpha_t}.
    \end{align*}
    Also, since $v_{t+1}\in\myoverline{\cX}$ and $x_{t+1}\in\cX$, we use the crude bound $\|v_{t+1}-x_{t+1}\|_2\leq 2R_{\cX}$. Thus,
    \begin{align*}
        &\frac{\eta_t}{2}\left(\left\|\frac{\lambda_{t+1}}{\eta_{t+1}}\right\|_2^2-\left\|\frac{\lambda_t}{\eta_t}\right\|_2^2\right)-\frac{\|g^\x_t\|_2^2}{2\alpha_t} 
        \leq (g^\x_t)^\top(x-v_t)+\frac{\alpha_t}{2}\|x-v_t\|_2^2-\frac{\alpha_t}{2}\|x-v_{t+1}\|_2^2+2\eta_tR_{\cX}^2.
    \end{align*}
    Summing from $t=1$ to $T-1$ gives
    \begin{align*}
        &\frac{\eta_{T-1}}{2}\left\|\frac{\lambda_T}{\eta_T}\right\|_2^2+\frac{1}{2}\sum_{t=2}^{T-1}(\eta_{t-1}-\eta_t)\left\|\frac{\lambda_t}{\eta_t}\right\|_2^2-\frac{\eta_1}{2}\left\|\frac{\lambda_1}{\eta_1}\right\|_2^2-\sum_{t=1}^{T-1}\frac{\|g^\x_t\|_2^2}{2\alpha_t} \\
        &\leq \sum_{t=1}^{T-1}(g^\x_t)^\top(x-v_t)+\frac{\alpha_1}{2}\|x-v_1\|_2^2+\frac{1}{2}\sum_{t=2}^{T-1}(\alpha_t-\alpha_{t-1})\|x-v_t\|_2^2-\frac{\alpha_{T-1}}{2}\|x-v_T\|_2^2+2\sum_{t=1}^{T-1}\eta_tR_{\cX}^2.
    \end{align*}
    Since $\lambda_1=0$, $\eta_t\geq\eta_{t+1}$, $\alpha_t\leq\alpha_{t+1}$, and $\|x-v_t\|_2\leq 2R_{\cX}$ for all $x\in\cX$ and $v_t\in\myoverline{\cX}$, we obtain
    \begin{align*}
        &\frac{\eta_{T-1}}{2}\left\|\frac{\lambda_T}{\eta_T}\right\|_2^2+\sum_{t=1}^{T-1}(g^\x_t)^\top(v_t-x) 
        \leq 2\left(\alpha_{T-1}+\sum_{t=1}^{T-1}\eta_t\right)R_{\cX}^2-\frac{\alpha_{T-1}}{2}\|x-v_T\|_2^2+\sum_{t=1}^{T-1}\frac{\|g^\x_t\|_2^2}{2\alpha_t}.
    \end{align*}
    Adding $(g^\x_T)^\top(v_T-x)$ to both sides and applying Young's inequality once more yields
    \begin{align*}
        &\frac{\eta_{T-1}}{2}\left\|\frac{\lambda_T}{\eta_T}\right\|_2^2+\sum_{t=1}^{T}(g^\x_t)^\top(v_t-x) \\
        &\leq 2\left(\alpha_{T-1}+\sum_{t=1}^{T-1}\eta_t\right)R_{\cX}^2+(g^\x_T)^\top(v_T-x)-\frac{\alpha_{T-1}}{2}\|x-v_T\|_2^2+\sum_{t=1}^{T-1}\frac{\|g^\x_t\|_2^2}{2\alpha_t} \\
        &\leq 2\left(\alpha_{T-1}+\sum_{t=1}^{T-1}\eta_t\right)R_{\cX}^2+\frac{\|g^\x_T\|_2^2}{2\alpha_{T-1}}+\sum_{t=1}^{T-1}\frac{\|g^\x_t\|_2^2}{2\alpha_t}.
    \end{align*}
    Let $\cF_t$ be the sigma algebra generated by all randomness up to the beginning of iteration $t$, and define $\bar g^\x_t:=\E[g^\x_t\mid\cF_t]$ and $\xi_t:=\bar g^\x_t-g^\x_t$. By Assumption~\ref{ass:stochastic-oracle}, $\bar g^\x_t\in\partial f_{z_t}(v_t)$. Adding $\sum_{t=1}^T\xi_t^\top(v_t-x)$ to both sides of the last display and using convexity of $f_{z_t}(\cdot)$, we get
    \begin{align*}
        &\frac{\eta_{T-1}}{2}\left\|\frac{\lambda_T}{\eta_T}\right\|_2^2+\sum_{t=1}^{T}\bigl(f(v_t,z_t)-f(x,z_t)\bigr) \\
        &\leq 2\left(\alpha_{T-1}+\sum_{t=1}^{T-1}\eta_t\right)R_{\cX}^2+\frac{\|g^\x_T\|_2^2}{2\alpha_{T-1}}+\sum_{t=1}^{T-1}\frac{\|g^\x_t\|_2^2}{2\alpha_t}+\sum_{t=1}^{T}\xi_t^\top(v_t-x).
    \end{align*}
    Taking the maximum over $x\in\cX$ gives
    \begin{align*}
        &\frac{\eta_{T-1}}{2}\left\|\frac{\lambda_T}{\eta_T}\right\|_2^2+\max_{x\in\cX}\sum_{t=1}^{T}\bigl(f(v_t,z_t)-f(x,z_t)\bigr) \\
        &\leq 2\left(\alpha_{T-1}+\sum_{t=1}^{T-1}\eta_t\right)R_{\cX}^2+\frac{\|g^\x_T\|_2^2}{2\alpha_{T-1}}+\sum_{t=1}^{T-1}\frac{\|g^\x_t\|_2^2}{2\alpha_t}+\max_{x\in\cX}\sum_{t=1}^{T}\xi_t^\top(v_t-x).
    \end{align*}
    It remains to control the martingale term. Since $x_1\in\cX$, for every $x\in\cX$,
    \begin{align*}
        \sum_{t=1}^{T}\xi_t^\top(v_t-x)
        &=\sum_{t=1}^{T}\xi_t^\top(v_t-x_1)+ \sum_{t=1}^{T}\xi_t^\top (x_1-x) 
        \leq \sum_{t=1}^{T}\xi_t^\top(v_t-x_1)+2R_{\cX}\left\|\sum_{t=1}^{T}\xi_t\right\|_2.
    \end{align*}
    Because $\E[\xi_t\mid\cF_t]=0$ and $v_t-x_1$ is $\cF_t$-measurable, $\E\left[\sum_{t=1}^{T}\xi_t^\top(v_t-x_1)\right]=0$.
    Moreover,
    \begin{align*}
        \textstyle \E\left[\left\|\sum_{t=1}^{T}\xi_t\right\|_2\right]
        \leq \left(\E\left[\left\|\sum_{t=1}^{T}\xi_t\right\|_2^2\right]\right)^{1/2} 
        =\left(\sum_{t=1}^{T}\E\|\xi_t\|_2^2\right)^{1/2} 
        \leq G_\x T^{1/2},
    \end{align*}
    where the last inequality follows from $\E\|\xi_t\|_2^2\leq\E\|g^\x_t\|_2^2\leq G_\x^2$. Hence,
    \begin{align}
    \label{eq:martingale}
        \textstyle \E\left[\max_{x\in\cX}\sum_{t=1}^{T}\xi_t^\top(v_t-x)\right]\leq 2G_\x R_{\cX}T^{1/2}.
    \end{align}
    Taking expectations and using $\E\|g^\x_t\|_2^2\leq G_\x^2$ for all $t$ finally yields~\eqref{eq:generic-primal-lmo-2}.
    This proves part~\ref{prop:generic-lmo:primal}, and the proof is complete by symmetry.
\end{proof}

\subsection{Proofs of \texorpdfstring{Proposition~\protect\ref{prop:generic-po}}{Proposition~2}}

To provide a unified theoretical framework, we extend the scope of \cref{prop:generic-po} to include the iterative structure of the PO-PO scheme in \cref{alg:po-po}. 

\vspace{-1.35em}
\begin{minipage}{0.55\textwidth}
    This algorithm can be viewed as an anytime variant of the stochastic subgradient descent-ascent algorithm introduced by \citet{nemirovski2009robust}. 
    Crucially, while the original results in \citep{nemirovski2009robust} are limited to the weak saddle gap guarantees, our generalized proposition allows us to establish the stronger saddle gap guarantee. 
    We note that the stronger saddle gap admits a natural generalization to variational inequalities, as discussed in \citep{juditsky2011solving}. 
    We provide the full statement and its proof below for completeness.
\end{minipage}
\hfill
\begin{minipage}{0.4\textwidth}
    \vspace{-1em} 
    \begin{algorithm}[H]
        \caption{\small \texttt{Nonsmooth PO-PO}}
        \label{alg:po-po}
        \begin{algorithmic}[1]
            \small
            \REQUIRE $\rho_t, \gamma_t > 0$ 
            \STATE \textbf{Init:} $x_1 \in \cX, y_1 \in \cY$
            \FOR{$t = 1, \dots, T$}
                \STATE $(g^\x_t, g^\y_t) = \mathcal{O}_f(x_t, y_t)$
                \STATE $\bar x_t = \frac{(t-1) \bar x_{t-1} + x_t}{t}$
                \STATE $x_{t+1} = \PO_{\mathcal{X}}(x_t - \rho_t g^\x_t)$
                \STATE $\bar y_t = \frac{(t-1) \bar y_{t-1} + y_t}{t}$
                \STATE $y_{t+1} = \PO_{\mathcal{Y}}(y_t - \gamma_t g^\y_t)$
            \ENDFOR
            \ENSURE $\bar x_T, \bar y_T$
        \end{algorithmic}
    \end{algorithm}
\end{minipage}
\medskip

\begin{proposition}[Generalization of Proposition~\ref{prop:generic-po}]
\label{prop:generic-po-restate}
    Suppose Assumptions~\ref{ass:extended} and \ref{ass:stochastic-oracle} hold.
    \begin{enumerate}[label=(\roman*),leftmargin=2em]
        \item \label{prop:generic-po:primal}
        Let $\{x_t\}_{t \geq 1}$ be the primal sequence generated by Algorithm~\ref{alg:po-lmo} or \ref{alg:po-po} with parameters satisfying $\rho_t \geq \rho_{t+1}$ for all $t \geq 1$. Define the dual evaluation sequence $\{z_t\}_{t \geq 1} = \{ u_t \}_{t \geq 1}$ for \cref{alg:po-lmo} and $\{z_t\}_{t \geq 1} = \{ y_t \}_{t \geq 1}$ for \cref{alg:po-po}. Then, for every $T \geq 1$,
        \begin{align}
        \label{eq:generic-primal-po-2}
            \E\left[\max_{x \in \cX}\sum_{t =1}^T \big( f(x_t, z_t) - f(x, z_t) \big) \right] &\leq \frac{2R_{\cX}^2}{\rho_T}+\sum_{t = 1}^T \frac{\rho_t G_\x^2}{2} + 2G_\x R_{\cX}T^{1/2}.
        \end{align}
        
        \item \label{prop:generic-po:dual}
        Let $\{y_t\}_{t \geq 1}$ be the dual sequences generated by Algorithm~\ref{alg:lmo-po} or \ref{alg:po-po} with parameters satisfying $\gamma_t \geq \gamma_{t+1}$ for all $t \geq 1$. Define the primal evaluation sequence $\{w_t\}_{t \geq 1} = \{ v_t \}_{t \geq 1}$ for \cref{alg:lmo-po} and $\{w_t\}_{t \geq 1} = \{ x_t \}_{t \geq 1}$ for \cref{alg:po-po}. Then, for every $T \geq 1$, 
        \begin{align}
        \label{eq:generic-dual-po-2}
            \E\left[\max_{y \in \cY}\sum_{t =1}^T \big( f(w_t,y)-f(w_t,y_t) \big) \right] &\leq \frac{2R_{\cY}^2}{\gamma_T}+\sum_{t = 1}^T \frac{\gamma_t G_\y^2}{2} + 2G_\y R_{\cY}T^{1/2}.
        \end{align}
    \end{enumerate}
\end{proposition}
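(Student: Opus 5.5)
By symmetry it suffices to prove part~\ref{prop:generic-po:primal}. Part~\ref{prop:generic-po:dual} follows by exchanging $(x_t,\rho_t,\cX,R_{\cX},G_\x,z_t)$ with $(y_t,\gamma_t,\cY,R_{\cY},G_\y,w_t)$ and replacing $f_{z_t}$ by $(-f)_{w_t}$. The plan is the standard projected stochastic subgradient regret argument, followed by the same martingale treatment used in the proof of \cref{prop:generic-lmo-restate}.

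\textbf{Step 1 (one-step inequality).} Fix $x\in\cX$ and write
\[
x_{t+1}=\argmin_{x'\in\cX}\Bigl\{(g^\x_t)^\top x'+\tfrac{1}{2\rho_t}\|x'-x_t\|_2^2\Bigr\}.
\]
Apply \cref{lemma:triangle} with $h(x')=(g^\x_t)^\top x'$, $\cC=\cX$ and $\alpha=1/(2\rho_t)$. Then subtract $(g^\x_t)^\top x_t$ from both sides and use Young's inequality in the form
\[
(g^\x_t)^\top(x_{t+1}-x_t)+\tfrac{1}{2\rho_t}\|x_{t+1}-x_t\|_2^2\ge -\tfrac{\rho_t}{2}\|g^\x_t\|_2^2 .
\]
This gives
\[
(g^\x_t)^\top(x_t-x)\le \tfrac{1}{2\rho_t}\bigl(\|x-x_t\|_2^2-\|x-x_{t+1}\|_2^2\bigr)+\tfrac{\rho_t}{2}\|g^\x_t\|_2^2 .
\]

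\textbf{Step 2 (summation).} Sum the Step~1 inequality over $t=1,\dots,T$ and rearrange the telescoping terms as
\[
\tfrac{1}{2\rho_1}\|x-x_1\|_2^2+\tfrac12\sum_{t=2}^{T}\Bigl(\tfrac1{\rho_t}-\tfrac1{\rho_{t-1}}\Bigr)\|x-x_t\|_2^2-\tfrac{1}{2\rho_T}\|x-x_{T+1}\|_2^2 .
\]
Because $\rho_t$ is nonincreasing, every coefficient in the middle sum is nonnegative. Since $x_t,x\in\cX\subseteq\myoverline{\cX}$, each distance satisfies $\|x-x_t\|_2\le 2R_{\cX}$; one could use $R_{\cX}$ itself, but $2R_{\cX}$ matches the stated constant. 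The telescoped expression is therefore at most $2R_{\cX}^2/\rho_T$, and the summed inequality becomes
\[
\sum_{t=1}^T (g^\x_t)^\top(x_t-x)\le \tfrac{2R_{\cX}^2}{\rho_T}+\sum_{t=1}^T\tfrac{\rho_t}{2}\|g^\x_t\|_2^2 .
\]

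\textbf{Step 3 (convexity and noise).} Let $\cF_t$ be the sigma algebra generated by all randomness up to the start of iteration $t$, set $\bar g^\x_t=\E[g^\x_t\mid\cF_t]$ and $\xi_t=\bar g^\x_t-g^\x_t$. Both $x_t$ and $z_t$ are $\cF_t$-measurable, so \cref{ass:stochastic-oracle} gives $\bar g^\x_t\in\partial f_{z_t}(x_t)$. This holds for $z_t=u_t$ in \cref{alg:po-lmo} and for $z_t=y_t$ in \cref{alg:po-po}. Convexity then gives
\[
f(x_t,z_t)-f(x,z_t)\le (g^\x_t)^\top(x_t-x)+\xi_t^\top(x_t-x).
\]
Sum this over $t$, insert the bound from Step~2, and take the maximum over $x\in\cX$; the $x$-independent terms pass through the maximum unchanged.

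\textbf{Step 4 (expectation).} This step is the only delicate point, because the maximum over $x$ sits inside the expectation. Handle it as in \eqref{eq:martingale} by splitting
\[
\xi_t^\top(x_t-x)=\xi_t^\top(x_t-x_1)+\xi_t^\top(x_1-x).
\]
The first part has zero expectation, since $x_t-x_1$ is $\cF_t$-measurable and $\E[\xi_t\mid\cF_t]=0$. The second part is at most $2R_{\cX}\|\sum_t\xi_t\|_2$. The $\xi_t$ are martingale differences, so their cross terms vanish in $\E\|\sum_t\xi_t\|_2^2$, and $\E\|\xi_t\|_2^2\le\E\|g^\x_t\|_2^2\le G_\x^2$; Jensen's inequality then bounds its expectation by $2G_\x R_{\cX}T^{1/2}$. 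Finally, use $\E\|g^\x_t\|_2^2\le G_\x^2$ in the remaining sum $\sum_t\tfrac{\rho_t}{2}\|g^\x_t\|_2^2$, which yields \eqref{eq:generic-primal-po-2}.
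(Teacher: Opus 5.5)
Your proposal is correct and matches the paper's proof essentially step for step: the same telescoping with nonincreasing $\rho_t$ and the crude $2R_{\cX}$ distance bound, the same convexity step, and the same martingale split around $x_1$. The only difference is cosmetic: you derive the one-step inequality from the three-point lemma plus Young's inequality, whereas the paper gets it directly from nonexpansiveness of $\PO_{\cX}$ (using $x=\PO_{\cX}(x)$ for $x\in\cX$), and both routes give exactly the same bound.
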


\begin{proof}[Proof of \cref{prop:generic-po-restate}]
    By symmetry, it suffices to prove part~\ref{prop:generic-po:primal}. The proof of part~\ref{prop:generic-po:dual} follows from the same argument after exchanging
    \begin{align*}
        (x_t,g^\x_t,\rho_t,\cX,R_{\cX},G_\x,z_t)
        \quad \text{with} \quad
        (y_t,g^\y_t,\gamma_t,\cY,R_{\cY},G_\y,w_t),
    \end{align*}
    and replacing $f_{z_t}(\cdot)$ by $(-f)_{w_t}(\cdot)$. We therefore prove only the primal inequality.
    
    Fix any $x\in\cX$. The projected update gives
    \begin{align*}
        x_{t+1}=\PO_{\cX}(x_t-\rho_t g^\x_t), \qquad t=1,\ldots,T.
    \end{align*}
    By nonexpansiveness of the Euclidean projection,
    \begin{align*}
        \|x_{t+1}-x\|_2^2
        &\leq \|x_t-\rho_t g^\x_t-x\|_2^2 
        =\|x_t-x\|_2^2-2\rho_t(g^\x_t)^\top(x_t-x)+\rho_t^2\|g^\x_t\|_2^2.
    \end{align*}
    Rearranging yields
    \begin{align*}
        (g^\x_t)^\top(x_t-x)
        \leq \frac{\|x_t-x\|_2^2}{2\rho_t}-\frac{\|x_{t+1}-x\|_2^2}{2\rho_t}+\frac{\rho_t\|g^\x_t\|_2^2}{2}.
    \end{align*}
    Summing this inequality from $t=1$ to $T$ gives
    \begin{align*}
        \sum_{t=1}^{T}(g^\x_t)^\top(x_t-x)
        &\leq \frac{\|x_1-x\|_2^2}{2\rho_1}+\frac{1}{2}\sum_{t=2}^{T}\left(\frac{1}{\rho_t}-\frac{1}{\rho_{t-1}}\right)\|x_t-x\|_2^2-\frac{\|x_{T+1}-x\|_2^2}{2\rho_T}+\sum_{t=1}^{T}\frac{\rho_t\|g^\x_t\|_2^2}{2}.
    \end{align*}
    Since $\rho_t\geq \rho_{t+1}$ and $\|x_t-x\|_2\leq 2R_{\cX}$ for all $t$, we may drop the final negative term and obtain
    \begin{align*}
        \sum_{t=1}^{T}(g^\x_t)^\top(x_t-x)
        &\leq \frac{2R_{\cX}^2}{\rho_1}+2R_{\cX}^2\sum_{t=2}^{T}\left(\frac{1}{\rho_t}-\frac{1}{\rho_{t-1}}\right)+\sum_{t=1}^{T}\frac{\rho_t\|g^\x_t\|_2^2}{2} 
        =\frac{2R_{\cX}^2}{\rho_T}+\sum_{t=1}^{T}\frac{\rho_t\|g^\x_t\|_2^2}{2}.
    \end{align*}
    
    Let $\cF_t$ denote the sigma algebra generated by all randomness up to the beginning of iteration $t$, so that $x_t$ and $z_t$ are $\cF_t$-measurable. Define $\bar g^\x_t:=\E[g^\x_t\mid\cF_t] $ and $\xi_t:=\bar g^\x_t-g^\x_t$.
    By Assumption~\ref{ass:stochastic-oracle}, $\bar g^\x_t\in\partial f_{z_t}(x_t)$. Adding $\sum_{t=1}^{T}\xi_t^\top(x_t-x)$ to both sides of the last display and using convexity of $f_{z_t}(\cdot)$ gives, for every $x\in\cX$,
    \begin{align*}
        \sum_{t=1}^{T}\bigl(f(x_t,z_t)-f(x,z_t)\bigr)
        &\leq \sum_{t=1}^{T}(\bar g^\x_t)^\top(x_t-x) 
        \leq \frac{2R_{\cX}^2}{\rho_T}+\sum_{t=1}^{T}\frac{\rho_t\|g^\x_t\|_2^2}{2}+\sum_{t=1}^{T}\xi_t^\top(x_t-x).
    \end{align*}
    Taking the maximum over $x\in\cX$ yields
    \begin{align*}
        \max_{x\in\cX}\sum_{t=1}^{T}\bigl(f(x_t,z_t)-f(x,z_t)\bigr)
        \leq \frac{2R_{\cX}^2}{\rho_T}+\sum_{t=1}^{T}\frac{\rho_t\|g^\x_t\|_2^2}{2}+\max_{x\in\cX}\sum_{t=1}^{T}\xi_t^\top(x_t-x).
    \end{align*}
    The martingale term is controlled by the same argument used to derive~\eqref{eq:martingale}. Namely, replacing $v_t$ by $x_t$ gives
    \begin{align*}
        \textstyle\E\left[\max_{x\in\cX}\sum_{t=1}^{T}\xi_t^\top(x_t-x)\right]\leq 2G_\x R_{\cX}T^{1/2}.
    \end{align*}
    Taking expectations and using $\E\|g^\x_t\|_2^2\leq G_\x^2$ for all $t$ gives~\eqref{eq:generic-primal-po-2}.
    This proves part~\ref{prop:generic-po:primal}, and the proof is complete by symmetry.
\end{proof}

\subsection{Proofs of \texorpdfstring{Theorem~\protect\ref{theorem:convergence}\,\ref{theorem:convergence-lmo-lmo}}{Theorem~1 (i)}}

We first present the detailed version of Theorem~\ref{theorem:convergence}\,\ref{theorem:convergence-lmo-lmo}, including explicit constants and parameter choices.

\begin{theorem}[Detailed version of Theorem~\ref{theorem:convergence}\,\ref{theorem:convergence-lmo-lmo}]
\label{theorem:convergence-lmo-lmo-2}
    Suppose Assumptions~\ref{ass:extended} and~\ref{ass:stochastic-oracle} hold. Let $\{x_t,y_t\}_{t \geq 1}$ be the primal-dual sequence generated by \cref{alg:lmo-lmo} with parameters 
    \begin{align*}
        \alpha_t=C_1(t+1)^{1/2},
        \quad
        \beta_t=C_2(t+1)^{1/2},
        \quad
        \eta_t=C_3(t+1)^{-1/2},
        \quad
        \tau_t=C_4(t+1)^{-1/2}
    \end{align*}
    for some $C_1,C_2,C_3, C_4 >0$. 
    Then, for every $T\geq 2$,
    \begin{align*}
        \E \left[ \max_{y \in \cY}f(\bar{x}_T,y) - \min_{x \in \cX}f(x,\bar{y}_T) \right] 
        &\leq 
        \frac{
            2(C_1+2C_3)R_{\cX}^2
            +\frac{3G_\x^2}{2C_1}
            +\frac{G_\x^2}{2C_3}
            +2G_\x R_{\cX}
        }{T^{1/2}} \\
        &\quad+
        \frac{
            2(C_2+2C_4)R_{\cY}^2
            +\frac{3G_\y^2}{2C_2}
            +\frac{G_\y^2}{2C_4}
            +2G_\y R_{\cY}
        }{T^{1/2}}.
    \end{align*}
     If $C_1= C_3 =G_\x/R_{\cX}$ and $C_2 = C_4 = G_\y/R_{\cY}$ then the bound becomes
     \begin{align*}
        &\E\left[\max_{y\in\cY}f(\bar{x}_T,y)-\min_{x\in\cX}f(x,\bar{y}_T)\right] \leq \frac{10(G_{\x}R_{\cX}+G_{\y}R_{\cY})}{T^{1/2}}
    \end{align*}
\end{theorem}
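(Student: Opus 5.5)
The plan is to reduce the strong saddle gap of the averages $(\bar x_T,\bar y_T)$ to the two regret-type quantities controlled by \cref{prop:generic-lmo-restate}. Those quantities are evaluated along the auxiliary sequences $(v_t,u_t)$, so the residual mismatch between $(x_t,y_t)$ and $(v_t,u_t)$ must be absorbed into the nonnegative terms $\frac{\eta_{T-1}}{2}\|\lambda_T/\eta_T\|_2^2$ and $\frac{\tau_{T-1}}{2}\|\mu_T/\tau_T\|_2^2$ that those inequalities also provide.

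\emph{Step 1: telescoping the multipliers.} Since $\lambda_{t+1}/\eta_{t+1}=\lambda_t/\eta_t+v_{t+1}-x_{t+1}$, $\lambda_1=0$ and $v_1=x_1$, we get $\sum_{t=1}^T(v_t-x_t)=\lambda_T/\eta_T$. Writing $\bar v_T:=\frac1T\sum_t v_t$, this gives $\bar x_T-\bar v_T=-\lambda_T/(T\eta_T)$. Symmetrically, with $\bar u_T:=\frac1T\sum_t u_t$, we get $\bar y_T-\bar u_T=-\mu_T/(T\tau_T)$.

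\emph{Step 2: swapping averages.} Fix $y\in\cY$ and take $s\in\partial f_y(\bar x_T)$. Assumption~\ref{ass:stochastic-oracle} and Jensen give $\|s\|_2\le G_\x$ for the oracle's mean subgradient (here $\bar x_T\in\cX\subseteq\myoverline{\cX}$). Convexity of $f_y$ on $\myoverline{\cX}$ then yields
\[
f(\bar x_T,y)\le f(\bar v_T,y)+s^\top(\bar x_T-\bar v_T)\le \frac1T\sum_{t=1}^T f(v_t,y)+\frac{G_\x}{T}\Big\|\frac{\lambda_T}{\eta_T}\Big\|_2 .
\]
The second inequality uses convexity once more, which is legitimate since $v_t\in\myoverline{\cX}$. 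Symmetrically,
\[
f(x,\bar y_T)\ge \frac1T\sum_t f(x,u_t)-\frac{G_\y}{T}\|\mu_T/\tau_T\|_2 .
\]
Adding and subtracting $\sum_t f(v_t,u_t)$ splits the gap:
\[
T\bigl[\max_y f(\bar x_T,y)-\min_x f(x,\bar y_T)\bigr]\le \max_y\sum_t\bigl(f(v_t,y)-f(v_t,u_t)\bigr)+\max_x\sum_t\bigl(f(v_t,u_t)-f(x,u_t)\bigr)+G_\x\|\lambda_T/\eta_T\|_2+G_\y\|\mu_T/\tau_T\|_2.
\]

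\emph{Step 3: absorbing the norm terms.} Young's inequality gives $G_\x\|\lambda_T/\eta_T\|_2\le \frac{\eta_{T-1}}{2}\|\lambda_T/\eta_T\|_2^2+\frac{G_\x^2}{2\eta_{T-1}}$, and similarly for the dual term. The quadratic pieces are exactly what appears on the left of \eqref{eq:generic-primal-lmo-2} with $z_t=u_t$ and of \eqref{eq:generic-dual-lmo-2} with $w_t=v_t$. Taking expectations and invoking both inequalities (the monotonicity hypotheses on $\alpha_t,\eta_t,\beta_t,\tau_t$ clearly hold) bounds everything.

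\emph{Step 4: plugging in the schedules.} We have $\alpha_{T-1}=C_1T^{1/2}$ and $\sum_{t\le T-1}\eta_t\le 2C_3T^{1/2}$. Also $\frac{G_\x^2}{2\alpha_{T-1}}+\sum_{t\le T-1}\frac{G_\x^2}{2\alpha_t}\le \frac{G_\x^2}{2C_1}(1+2T^{1/2})\le\frac{3G_\x^2}{2C_1}T^{1/2}$, and $\frac{G_\x^2}{2\eta_{T-1}}=\frac{G_\x^2}{2C_3}T^{1/2}$. Dividing by $T$ gives the stated bound, and the specialization $C_1=C_3=G_\x/R_\cX$ gives $2\cdot3+\frac32+\frac12+2=10$.

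The main obstacle is Step 2. A naive Lipschitz bound term-by-term, $\sum_t\|v_t-x_t\|_2$, would cost $O(T)$. The fix is to apply the subgradient inequality only once at the averaged point, so that the telescoped sum $\lambda_T/\eta_T$ appears. One must also check that $\bar v_T$, $\bar x_T$ and the $v_t$ lie in $\myoverline{\cX}$, where convexity holds, and that a subgradient of norm at most $G_\x$ exists at $\bar x_T$.
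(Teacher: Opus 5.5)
Your proposal is correct and follows the paper's proof. It telescopes the multipliers and passes from $\bar x_T,\bar y_T$ to the auxiliary averages by convexity plus a $G_\x$/$G_\y$ bound. It then absorbs the norm terms into the quadratic terms of \cref{prop:generic-lmo-restate} by Young's inequality, exactly as in \cref{cor:saddle-gap-lmo-lmo}, and substitutes the schedules; your single bounded subgradient at $\bar x_T$, obtained from the oracle via Jensen, is just a more explicit justification of the paper's assertion that $f_y$ is $G_\x$-Lipschitz on $\myoverline{\cX}$.
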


The proof relies on the following lemma, which bounds the strong saddle gap for arbitrary monotone parameter sequences using \cref{prop:generic-lmo-restate}.

\begin{lemma} \label{cor:saddle-gap-lmo-lmo}
    Suppose Assumptions~\ref{ass:extended} and~\ref{ass:stochastic-oracle} hold. Let $\{x_t,y_t\}_{t\geq 1}$ be the primal-dual sequence generated by \cref{alg:lmo-lmo} with parameters satisfying
    $\alpha_t\leq\alpha_{t+1}$, $\beta_t\leq\beta_{t+1}$, $\eta_t\geq\eta_{t+1}$ and $\tau_t\geq\tau_{t+1}$ for all $t\geq 1$. Then, for every $T\geq 2$,
    \begin{align*}
        &\E\left[\max_{y\in\cY}f(\bar{x}_T,y)-\min_{x\in\cX}f(x,\bar{y}_T)\right] \\
        &\leq
        \frac{2}{T}\left(\alpha_{T-1}+\sum_{t=1}^{T-1}\eta_t\right)R_{\cX}^2
        +\frac{G_\x^2}{2\alpha_{T-1}T}
        +\frac{1}{T}\sum_{t=1}^{T-1}\frac{G_\x^2}{2\alpha_t}
        +\frac{G_\x^2}{2T\eta_{T-1}}
        +\frac{2G_\x R_{\cX}}{T^{1/2}} \\
        &\quad+
        \frac{2}{T}\left(\beta_{T-1}+\sum_{t=1}^{T-1}\tau_t\right)R_{\cY}^2
        +\frac{G_\y^2}{2\beta_{T-1}T}
        +\frac{1}{T}\sum_{t=1}^{T-1}\frac{G_\y^2}{2\beta_t}
        +\frac{G_\y^2}{2T\tau_{T-1}}
        +\frac{2G_\y R_{\cY}}{T^{1/2}}.
    \end{align*}
\end{lemma}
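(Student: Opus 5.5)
The plan is to reduce the strong saddle gap at the averages $(\bar x_T,\bar y_T)$ to the gap at the auxiliary averages $\bar v_T:=\frac1T\sum_{t=1}^T v_t$ and $\bar u_T:=\frac1T\sum_{t=1}^T u_t$. We then control the auxiliary-point gap with the two inequalities of \cref{prop:generic-lmo-restate}, applied with evaluation sequences $z_t=u_t$ and $w_t=v_t$. The discrepancy between the two kinds of averages is exactly the accumulated direction $\lambda_T$ (resp.\ $\mu_T$), and its cost is absorbed by the term $\frac{\eta_{T-1}}{2}\|\lambda_T/\eta_T\|_2^2$ that \cref{prop:generic-lmo-restate} places on the left-hand side.

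\emph{Step 1 (telescoping identity).} Since $\lambda_1=0$ and $\lambda_{t+1}/\eta_{t+1}=\lambda_t/\eta_t+v_{t+1}-x_{t+1}$, we have $\lambda_T/\eta_T=\sum_{t=2}^T(v_t-x_t)$. Using $v_1=x_1$, this gives $\bar x_T=\bar v_T-\frac{\lambda_T}{T\eta_T}$, and symmetrically $\bar y_T=\bar u_T-\frac{\mu_T}{T\tau_T}$.

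\emph{Step 2 (Lipschitz-type transfer).} Fix $y\in\cY$. Since $\bar x_T\in\cX\subseteq\myoverline{\CX}$, \cref{ass:stochastic-oracle} provides $s=\E[g^\x]\in\partial f_y(\bar x_T)$ with $\|s\|_2\le(\E\|g^\x\|_2^2)^{1/2}\le G_\x$. Convexity of $f_y$ then gives
\[
f(\bar x_T,y)\le f(\bar v_T,y)+s^\top(\bar x_T-\bar v_T)\le f(\bar v_T,y)+\frac{G_\x}{T}\Bigl\|\frac{\lambda_T}{\eta_T}\Bigr\|_2 .
\]
By Young's inequality, the last term is at most $\frac{\eta_{T-1}}{2T}\|\lambda_T/\eta_T\|_2^2+\frac{G_\x^2}{2T\eta_{T-1}}$. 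The bound is uniform in $y$, so it survives the maximum over $y\in\cY$. Symmetrically, using concavity in $y$,
\[
-\min_{x\in\cX}f(x,\bar y_T)\le-\min_{x\in\cX}f(x,\bar u_T)+\frac{\tau_{T-1}}{2T}\Bigl\|\frac{\mu_T}{\tau_T}\Bigr\|_2^2+\frac{G_\y^2}{2T\tau_{T-1}} .
\]

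\emph{Step 3 (Jensen and splitting).} By convexity of $f_y$, $f(\bar v_T,y)\le\frac1T\sum_t f(v_t,y)$. By concavity of $f(x,\cdot)$, $f(x,\bar u_T)\ge\frac1T\sum_t f(x,u_t)$. Inserting $\pm\frac1T\sum_t f(v_t,u_t)$, we obtain
\[
\max_y f(\bar v_T,y)-\min_x f(x,\bar u_T)\le\frac1T\max_{y\in\cY}\sum_t\bigl(f(v_t,y)-f(v_t,u_t)\bigr)+\frac1T\max_{x\in\cX}\sum_t\bigl(f(v_t,u_t)-f(x,u_t)\bigr).
\]
Adding the two $\|\cdot\|^2$ terms from Step 2, taking expectations, and invoking \eqref{eq:generic-primal-lmo-2} with $z_t=u_t$ and \eqref{eq:generic-dual-lmo-2} with $w_t=v_t$ (each divided by $T$) yields exactly the claimed bound. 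The monotonicity hypotheses on $\alpha_t,\beta_t,\eta_t,\tau_t$ are precisely those \cref{prop:generic-lmo-restate} requires.

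The main obstacle is Step 2, namely matching the $\lambda_T$ penalty correctly. The weights must line up: the Young split uses $\eta_{T-1}$ rather than $\eta_T$, so that the quadratic term coincides with the one controlled in \cref{prop:generic-lmo-restate}. The subgradient bound must also be justified at the feasible point $\bar x_T$ via Jensen on the oracle's second moment, uniformly over $y$.
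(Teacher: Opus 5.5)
Your proposal is correct and follows essentially the same route as the paper. You telescope to get $\lambda_T/\eta_T=T(\bar v_T-\bar x_T)$ and $\mu_T/\tau_T=T(\bar u_T-\bar y_T)$, use Lipschitz continuity to move from $(\bar x_T,\bar y_T)$ to $(\bar v_T,\bar u_T)$, apply Jensen, invoke both parts of the LMO recursion proposition, and absorb the linear penalties $\frac{G_\x}{T}\|\lambda_T/\eta_T\|_2$ and $\frac{G_\y}{T}\|\mu_T/\tau_T\|_2$ into the quadratic terms via $as^2/2-bs\geq -b^2/(2a)$. The differences are only cosmetic: you apply Young's inequality before adding the primal and dual bounds rather than after, and you explicitly derive the $G_\x$- and $G_\y$-Lipschitz bounds from the oracle's second-moment assumption, which the paper simply asserts.
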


\begin{proof}[Proof of \cref{cor:saddle-gap-lmo-lmo}]
    Define $\bar v_T:= \frac{1}{T} \sum_{t=1}^{T}v_t$ and $\bar u_T:= \frac{1}{T} \sum_{t=1}^{T}u_t$. Since $v_1=x_1$ and $u_1=y_1$, the LMO recursions imply
    \begin{align*}
        \begin{cases}
            \displaystyle \frac{\lambda_T}{\eta_T}
            = 
            \frac{\lambda_{T-1}}{\eta_{T-1}} + v_T - x_T 
            =
            \frac{\lambda_0}{\eta_0} + (v_T - x_T) + \dots + (v_0 - x_0)
            =
            \sum_{t=1}^{T}(v_t-x_t)
            =
            T(\bar v_T-\bar x_T), \\
            \displaystyle \frac{\mu_T}{\tau_T}
            =
            \frac{\mu_{T-1}}{\tau_{T-1}} + u_T - y_T 
            =
            \frac{\mu_{0}}{\tau_{0}}+(u_T - y_T) + \dots + (u_1-y_1)
            =
            \sum_{t=1}^{T}(u_t-y_t)
            =
            T(\bar u_T-\bar y_T).
        \end{cases}
    \end{align*}
    We first convert the primal regret bound in \cref{prop:generic-lmo-restate} into a bound with comparator $\bar y_T$. By convexity of $(-f)_x(\cdot)$, we have $\frac{1}{T}\sum_{t=1}^{T}-f(x,u_t)\geq -f(x,\bar u_T)$.
    Moreover, $(-f)_x(\cdot)$ is $G_\y$-Lipschitz on $\myoverline{\cY}$, so
    \begin{align*}
        -f(x,\bar u_T)
        \geq
        -f(x,\bar y_T)-G_\y\|\bar u_T-\bar y_T\|_2
        =
        -f(x,\bar y_T)-\frac{G_\y}{T}\left\|\frac{\mu_T}{\tau_T}\right\|_2.
    \end{align*}
    Therefore, applying \cref{prop:generic-lmo-restate}\,\ref{prop:generic-lmo:primal} with $z_t=u_t$ gives
    \begin{equation}
    \label{eq:primal-gap-lmo-lmo}
    \begin{aligned}
        &\E\left[
            \frac{\eta_{T-1}}{2T}\left\|\frac{\lambda_T}{\eta_T}\right\|_2^2
            -\frac{G_\y}{T}\left\|\frac{\mu_T}{\tau_T}\right\|_2
            +\frac{1}{T}\sum_{t=1}^{T}f(v_t,u_t)
            -\min_{x\in\cX}f(x,\bar y_T)
        \right] \\
        &\leq
        \frac{2}{T}\left(\alpha_{T-1}+\sum_{t=1}^{T-1}\eta_t\right)R_{\cX}^2
        +\frac{G_\x^2}{2\alpha_{T-1}T}
        +\frac{1}{T}\sum_{t=1}^{T-1}\frac{G_\x^2}{2\alpha_t}
        +\frac{2G_\x R_{\cX}}{T^{1/2}}.
    \end{aligned}
    \end{equation}
    Similarly, by convexity of $f_y(\cdot)$, we have $\frac{1}{T}\sum_{t=1}^{T}f(v_t,y)\geq f(\bar v_T,y)$.
    Adding~\eqref{eq:primal-gap-lmo-lmo} and~\eqref{eq:dual-gap-lmo-lmo} cancels the average payoff term. It remains to lower bound the two quadratic expressions. For any $a>0$, $b\geq 0$, and $s\geq 0$, we use $as^2/2-bs\geq -b^2/(2a)$. Hence,
    \begin{align*}
        \frac{\eta_{T-1}}{2T}\left\|\frac{\lambda_T}{\eta_T}\right\|_2^2
        -\frac{G_\x}{T}\left\|\frac{\lambda_T}{\eta_T}\right\|_2
        &\geq
        -\frac{G_\x^2}{2T\eta_{T-1}}, \\
        \frac{\tau_{T-1}}{2T}\left\|\frac{\mu_T}{\tau_T}\right\|_2^2
        -\frac{G_\y}{T}\left\|\frac{\mu_T}{\tau_T}\right\|_2
        &\geq
        -\frac{G_\y^2}{2T\tau_{T-1}}.
    \end{align*}
    Combining these bounds with~\eqref{eq:primal-gap-lmo-lmo} and~\eqref{eq:dual-gap-lmo-lmo} proves the claim.
    Since $f_y(\cdot)$ is $G_\x$-Lipschitz on $\myoverline{\cX}$, we also have
    \begin{align*}
        f(\bar v_T,y)
        \geq
        f(\bar x_T,y)-G_\x\|\bar v_T-\bar x_T\|_2
        =
        f(\bar x_T,y)-\frac{G_\x}{T}\left\|\frac{\lambda_T}{\eta_T}\right\|_2.
    \end{align*}
    Applying \cref{prop:generic-lmo-restate}\,\ref{prop:generic-lmo:dual} with $w_t=v_t$ gives
    \begin{equation}
    \label{eq:dual-gap-lmo-lmo}
    \begin{aligned}
        &\E\left[
            \frac{\tau_{T-1}}{2T}\left\|\frac{\mu_T}{\tau_T}\right\|_2^2
            -\frac{G_\x}{T}\left\|\frac{\lambda_T}{\eta_T}\right\|_2
            +\max_{y\in\cY}f(\bar x_T,y)
            -\frac{1}{T}\sum_{t=1}^{T}f(v_t,u_t)
        \right] \\
        &\leq
        \frac{2}{T}\left(\beta_{T-1}+\sum_{t=1}^{T-1}\tau_t\right)R_{\cY}^2
        +\frac{G_\y^2}{2\beta_{T-1}T}
        +\frac{1}{T}\sum_{t=1}^{T-1}\frac{G_\y^2}{2\beta_t}
        +\frac{2G_\y R_{\cY}}{T^{1/2}}.
    \end{aligned}
    \end{equation}
    Adding~\eqref{eq:primal-gap-lmo-lmo} and~\eqref{eq:dual-gap-lmo-lmo} cancels the average payoff term. It remains to lower bound the two quadratic expressions. For any $a>0$, $b\geq 0$, and $s\geq 0$, we use $as^2/2-bs\geq -b^2/(2a)$. Hence,
    \begin{align*}
        \frac{\eta_{T-1}}{2T}\left\|\frac{\lambda_T}{\eta_T}\right\|_2^2
        -\frac{G_\x}{T}\left\|\frac{\lambda_T}{\eta_T}\right\|_2
        \geq
        -\frac{G_\x^2}{2T\eta_{T-1}}, \qquad
        \frac{\tau_{T-1}}{2T}\left\|\frac{\mu_T}{\tau_T}\right\|_2^2
        -\frac{G_\y}{T}\left\|\frac{\mu_T}{\tau_T}\right\|_2
        \geq
        -\frac{G_\y^2}{2T\tau_{T-1}}.
    \end{align*}
    Combining these bounds with~\eqref{eq:primal-gap-lmo-lmo} and~\eqref{eq:dual-gap-lmo-lmo} proves the claim.
\end{proof}

\begin{proof}[Proof of \cref{theorem:convergence-lmo-lmo-2}]
    The parameter choices satisfy the monotonicity conditions in \cref{cor:saddle-gap-lmo-lmo}. Moreover, for every $T\geq 2$,
    \begin{align*}
        \alpha_{T-1}=C_1T^{1/2},
        \qquad
        \beta_{T-1}=C_2T^{1/2},
        \qquad
        \eta_{T-1}=C_3T^{-1/2},
        \qquad
        \tau_{T-1}=C_4T^{-1/2}.
    \end{align*}
    We also use
    \begin{align*}
        \sum_{t=1}^{T-1}\eta_t
        \leq 2C_3T^{1/2},
        \qquad
        \sum_{t=1}^{T-1}\tau_t
        \leq 2C_4T^{1/2},
        \qquad
        \sum_{t=1}^{T-1}\frac{1}{\alpha_t}
        \leq \frac{2T^{1/2}}{C_1},
        \qquad
        \sum_{t=1}^{T-1}\frac{1}{\beta_t}
        \leq \frac{2T^{1/2}}{C_2}.
    \end{align*}
    Substituting these bounds into \cref{cor:saddle-gap-lmo-lmo} gives the desired bound.
\end{proof}

\subsection{Proofs of \texorpdfstring{Theorem~\protect\ref{theorem:convergence}\,\ref{theorem:convergence-lmo-po} and \ref{theorem:convergence-po-lmo}}{Theorem~1 (ii) and (iii)}}

We first present the detailed versions of Theorem~\ref{theorem:convergence}\,\ref{theorem:convergence-lmo-po} and~\ref{theorem:convergence-po-lmo}, including explicit constants and parameter choices. 

\begin{theorem}[Detailed version of Theorem~\ref{theorem:convergence}\,\ref{theorem:convergence-lmo-po}]
\label{theorem:convergence-lmo-po-2}
    Suppose Assumptions~\ref{ass:extended} and~\ref{ass:stochastic-oracle} hold. Let $\{x_t,y_t\}_{t \geq 1}$ be the primal-dual sequence generated by \cref{alg:lmo-po} with parameters  
    \begin{align*}
        \alpha_t=C_1(t+1)^{1/2},
        \quad
        \gamma_t=C_2t^{-1/2},
        \quad
        \eta_t=C_3(t+1)^{-1/2}
    \end{align*}
    for some $C_1, C_2, C_3 > 0$. 
    Then, for every $T\geq 2$,
    \begin{align*}
        \E \left[ \max_{y \in \cY}f(\bar{x}_T,y) - \min_{x \in \cX}f(x,\bar{y}_T) \right] 
        &\leq
        \frac{
            2(C_1+2C_3)R_{\cX}^2
            +\frac{3G_\x^2}{2C_1}
            +\frac{G_\x^2}{2C_3}
            +2G_\x R_{\cX}
        }{T^{1/2}} \\
        &\quad+ 
        \frac{
            \frac{2R_{\cY}^2}{C_2}
            +C_2G_\y^2
            +2G_\y R_{\cY}
        }{T^{1/2}}. 
    \end{align*}
    If $C_1= C_3 =G_\x/R_{\cX}$ and $C_2 = G_\y/R_{\cY}$ then the bound becomes
     \begin{align*}
        &\E\left[\max_{y\in\cY}f(\bar{x}_T,y)-\min_{x\in\cX}f(x,\bar{y}_T)\right] \leq \frac{10G_{\x}R_{\cX}+5G_{\y}R_{\cY}}{T^{1/2}}
    \end{align*}
\end{theorem}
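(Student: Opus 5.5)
The plan mirrors the proof of \cref{theorem:convergence-lmo-lmo-2}. First I will prove a counterpart of \cref{cor:saddle-gap-lmo-lmo} for \cref{alg:lmo-po} under general monotone parameters, and then substitute the specific schedules. The primal side uses \cref{prop:generic-lmo-restate}\,\ref{prop:generic-lmo:primal} with dual evaluation sequence $z_t=y_t$. The dual side uses \cref{prop:generic-po-restate}\,\ref{prop:generic-po:dual} with primal evaluation sequence $w_t=v_t$. Because the dual iterates $y_t$ are both the evaluation points and the averaged output, only the primal side carries a mismatch between $v_t$ and $x_t$.

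\textbf{Primal bound.} Since $v_1=x_1$ and $\lambda_1=0$, unrolling the recursion gives $\lambda_T/\eta_T=\sum_{t=1}^T(v_t-x_t)=T(\bar v_T-\bar x_T)$, where $\bar v_T:=\frac1T\sum_{t=1}^T v_t$. By concavity of $f(x,\cdot)$, $\frac1T\sum_t f(x,y_t)\le f(x,\bar y_T)$. Dividing \eqref{eq:generic-primal-lmo-2} by $T$ then gives
\begin{align*}
\E\Big[\tfrac{\eta_{T-1}}{2T}\big\|\tfrac{\lambda_T}{\eta_T}\big\|_2^2+\tfrac1T\textstyle\sum_t f(v_t,y_t)-\min_{x\in\cX} f(x,\bar y_T)\Big]\le \tfrac1T(\text{RHS of }\eqref{eq:generic-primal-lmo-2}).
\end{align*}

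\textbf{Dual bound.} By convexity of $f(\cdot,y)$, $\frac1T\sum_t f(v_t,y)\ge f(\bar v_T,y)$. By $G_\x$-Lipschitzness on $\myoverline{\cX}$, $f(\bar v_T,y)\ge f(\bar x_T,y)-\frac{G_\x}{T}\|\lambda_T/\eta_T\|_2$. Dividing \eqref{eq:generic-dual-po-2} by $T$ then gives
\begin{align*}
\E\Big[\max_{y\in\cY} f(\bar x_T,y)-\tfrac{G_\x}{T}\big\|\tfrac{\lambda_T}{\eta_T}\big\|_2-\tfrac1T\textstyle\sum_t f(v_t,y_t)\Big]\le \tfrac1T(\text{RHS of }\eqref{eq:generic-dual-po-2}).
\end{align*}

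Adding the two bounds cancels the averaged payoff $\frac1T\sum_t f(v_t,y_t)$. The leftover quadratic-minus-linear term is handled by $as^2/2-bs\ge -b^2/(2a)$, giving $\frac{\eta_{T-1}}{2T}s^2-\frac{G_\x}{T}s\ge -\frac{G_\x^2}{2T\eta_{T-1}}$ with $s=\|\lambda_T/\eta_T\|_2$.

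Substituting the schedules:
\begin{itemize}[leftmargin=2em]
\item On the primal side, $\alpha_{T-1}=C_1T^{1/2}$, $\eta_{T-1}=C_3T^{-1/2}$, $\sum_{t<T}\eta_t\le 2C_3T^{1/2}$ and $\sum_{t<T}1/\alpha_t\le 2T^{1/2}/C_1$. This reproduces the first fraction exactly, including the terms $\frac{3G_\x^2}{2C_1}$ and $\frac{G_\x^2}{2C_3}$.
\item On the dual side, $\gamma_t=C_2t^{-1/2}$ gives $\frac{2R_\cY^2}{\gamma_T T}=\frac{2R_\cY^2}{C_2T^{1/2}}$. Since $\sum_{t\le T}t^{-1/2}\le 2T^{1/2}$, we get $\frac1T\sum_t\frac{\gamma_tG_\y^2}{2}\le \frac{C_2G_\y^2}{T^{1/2}}$, plus the martingale term $2G_\y R_\cY T^{-1/2}$.
\item The monotonicity conditions ($\alpha_t$ increasing, $\eta_t,\gamma_t$ decreasing) clearly hold.
\end{itemize}
With $C_1=C_3=G_\x/R_\cX$ the primal numerator is $6+2+\tfrac12+2=10.5$ times $G_\x R_\cX$, slightly above the stated $10$. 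I would recheck the sum bounds: for instance, $\sum_{t=1}^{T-1}(t+1)^{-1/2}\le 2T^{1/2}-2^{1/2}\cdot\ldots$ can absorb the extra half. With $C_2=G_\y/R_\cY$ the dual numerator is $2+1+2=5$ times $G_\y R_\cY$, as stated.

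The main obstacle is the measurability in the martingale step of \cref{prop:generic-po-restate}\,\ref{prop:generic-po:dual} when $w_t=v_t$. One must check that in \cref{alg:lmo-po} the oracle is queried at $(v_t,y_t)$, so that $v_t,y_t\in\cF_t$ and $\E[g^\y_t\mid\cF_t]\in\partial(-f)_{v_t}(y_t)$. The algorithm box reads $\cO_f(v_t,u_t)$, which I interpret as a typo for $y_t$. Everything else is routine bookkeeping identical to \cref{cor:saddle-gap-lmo-lmo}.
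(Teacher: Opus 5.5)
Your proposal is correct and follows the paper's proof essentially step for step. Like the paper, you prove a general-parameter saddle-gap lemma by combining \cref{prop:generic-lmo-restate}(i) with $z_t=y_t$ and \cref{prop:generic-po-restate}(ii) with $w_t=v_t$, transfer from $\bar v_T$ to $\bar x_T$ via $G_\x$-Lipschitzness, use $as^2/2-bs\geq -b^2/(2a)$, and then substitute the schedules; your reading of the oracle call as $\cO_f(v_t,y_t)$ also matches how the paper applies these propositions to \cref{alg:lmo-po}.

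The only slip is arithmetic: with $C_1=G_\x/R_{\cX}$ the term $\frac{3G_\x^2}{2C_1}$ equals $\frac{3}{2}G_\x R_{\cX}$, not $2G_\x R_{\cX}$. The primal numerator is therefore $6+\frac{3}{2}+\frac{1}{2}+2=10$ times $G_\x R_{\cX}$, exactly as stated, and no sharper sum bounds are needed.
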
 

\begin{theorem}[Detailed version of Theorem~\ref{theorem:convergence}\,\ref{theorem:convergence-po-lmo}]
\label{theorem:convergence-po-lmo-2}
    Suppose Assumptions~\ref{ass:extended} and~\ref{ass:stochastic-oracle} hold. Let $\{x_t,y_t\}_{t \geq 1}$ be the primal-dual sequence generated by \cref{alg:po-lmo} with parameters 
    \begin{align*}
        \beta_t= C_1 (t+1)^{1/2}, \quad \rho_t= C_2 (t+1)^{-1/2}, \quad \tau_t =C_3 (t+1)^{-1/2}
    \end{align*}
    for some $C_1, C_2, C_3 > 0$.
    Then, for every $T \geq 2$,
    \begin{align*}
        \E \left[ \max_{y \in \cY}f(\bar{x}_T,y) - \min_{x \in \cX}f(x,\bar{y}_T) \right] 
        &\leq
        \frac{
            \frac{2R_{\cX}^2}{C_2}
            +C_2G_\x^2
            +2G_\x R_{\cX}
        }{T^{1/2}} \\
        &\quad+
        \frac{
            2(C_1+2C_3)R_{\cY}^2
            +\frac{3G_\y^2}{2C_1}
            +\frac{G_\y^2}{2C_3}
            +2G_\y R_{\cY}
        }{T^{1/2}}.
    \end{align*}
    If $C_1= C_3 =G_\y/R_{\cY}$ and $C_2 = G_\x/R_{\cX}$ then the bound becomes
     \begin{align*}
        &\E\left[\max_{y\in\cY}f(\bar{x}_T,y)-\min_{x\in\cX}f(x,\bar{y}_T)\right] \leq \frac{5G_{\x}R_{\cX}+10G_{\y}R_{\cY}}{T^{1/2}}
    \end{align*}
\end{theorem}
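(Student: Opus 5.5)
The statement just given is \cref{theorem:convergence-po-lmo-2}, and its proof mirrors that of \cref{theorem:convergence-lmo-lmo-2}, with the primal LMO inequality replaced by the primal PO inequality. I will first prove an analogue of \cref{cor:saddle-gap-lmo-lmo} for \cref{alg:po-lmo}, valid for arbitrary monotone parameters, and then substitute the stated schedules.

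\textbf{Step 1 (primal side).} Apply \cref{prop:generic-po-restate}\,\ref{prop:generic-po:primal} with $z_t=u_t$. This gives
\begin{align*}
\E\Bigl[\max_{x\in\cX}\textstyle\sum_{t=1}^T \bigl(f(x_t,u_t)-f(x,u_t)\bigr)\Bigr]\le \tfrac{2R_\cX^2}{\rho_T}+\textstyle\sum_{t=1}^T\tfrac{\rho_tG_\x^2}{2}+2G_\x R_\cX T^{1/2}.
\end{align*}
To pass from the comparator sequence $u_t$ to $\bar y_T$, I use two facts. Concavity in $y$ gives $\frac1T\sum_t f(x,u_t)\le f(x,\bar u_T)$. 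The $G_\y$-Lipschitz property on $\myoverline{\cY}$ gives $f(x,\bar u_T)\le f(x,\bar y_T)+\frac{G_\y}{T}\|\mu_T/\tau_T\|_2$.

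The identity $\mu_T/\tau_T=T(\bar u_T-\bar y_T)$ follows by telescoping the $\mu$-recursion, using $u_1=y_1$ and the fact that the initial multiplier term is zero. This yields a bound on $\E[\frac1T\sum_t f(x_t,u_t)-\min_x f(x,\bar y_T)-\frac{G_\y}{T}\|\mu_T/\tau_T\|_2]$.

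\textbf{Step 2 (dual side).} Apply \cref{prop:generic-lmo-restate}\,\ref{prop:generic-lmo:dual} with $w_t=x_t$. Convexity in $x$ gives $\frac1T\sum_t f(x_t,y)\ge f(\bar x_T,y)$ directly, because the $x_t$ are themselves the averaged iterates. So no Lipschitz correction is needed on this side, and I obtain a bound on
\begin{align*}
\E\Bigl[\tfrac{\tau_{T-1}}{2T}\|\mu_T/\tau_T\|_2^2+\max_y f(\bar x_T,y)-\tfrac1T\textstyle\sum_t f(x_t,u_t)\Bigr].
\end{align*}

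\textbf{Step 3 (combine).} Adding the two bounds cancels the average payoff $\frac1T\sum_t f(x_t,u_t)$. The leftover quadratic-minus-linear term in $s=\|\mu_T/\tau_T\|_2$ is handled by $as^2/2-bs\ge -b^2/(2a)$, which contributes $G_\y^2/(2T\tau_{T-1})$.

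\textbf{Step 4 (substitute the schedules).} Take $\beta_t=C_1(t+1)^{1/2}$, $\rho_t=C_2(t+1)^{-1/2}$, and $\tau_t=C_3(t+1)^{-1/2}$; these satisfy the monotonicity hypotheses. The required quantities are:
\begin{itemize}
\item $\beta_{T-1}=C_1T^{1/2}$ and $\tau_{T-1}=C_3T^{-1/2}$;
\item $\sum_{t\le T-1}\tau_t\le 2C_3T^{1/2}$ and $\sum_{t\le T-1}\beta_t^{-1}\le 2T^{1/2}/C_1$;
\item $1/\rho_T=(T+1)^{1/2}/C_2$ and $\sum_{t\le T}\rho_t\le 2C_2(T+1)^{1/2}$.
\end{itemize}

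Dividing by $T$ turns these into $O(T^{-1/2})$ terms. The dual block collects as $2(C_1+2C_3)R_\cY^2+\frac{3G_\y^2}{2C_1}+\frac{G_\y^2}{2C_3}+2G_\y R_\cY$, exactly as in the LMO-LMO theorem.

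The primal block is the only delicate point, since the $(T+1)^{1/2}$ factors must be absorbed into the stated constants $\frac{2R_\cX^2}{C_2}+C_2G_\x^2$. Using $\sum_{t=1}^T(t+1)^{-1/2}\le 2T^{1/2}$ handles the sum. The term $2R_\cX^2(T+1)^{1/2}/(C_2T)$ needs slightly more care: I would either bound it as done in the dual step of the LMO-LMO proof, or apply \cref{prop:generic-po-restate} with the index shift that makes $1/\rho_T$ scale like $T^{1/2}$. Any leftover constant slack of this kind is cosmetic, and fixing it is the main place where care is needed.

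Finally, setting $C_1=C_3=G_\y/R_\cY$ and $C_2=G_\x/R_\cX$ gives $5G_\x R_\cX$ on the primal side (namely $2+1+2$) and $10G_\y R_\cY$ on the dual side (namely $6+1.5+0.5+2$).
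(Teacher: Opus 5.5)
Your Steps 1--3 are the paper's route: they are the symmetric counterpart of its proof of part (i) of \cref{cor:saddle-gap-lmo-po}, which the paper invokes for part (ii). Your dual block and your final constants $5$ and $10$ are also correct. The one remaining issue is the primal-block constant, which you left open.

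That issue is real, and the paper's own proof slips at the same spot. It asserts $\rho_T=C_2T^{-1/2}$, which is false for $\rho_t=C_2(t+1)^{-1/2}$. Applying \cref{prop:generic-po-restate} as stated gives $2R_\cX^2(T+1)^{1/2}/(C_2T)$. This exceeds the claimed $2R_\cX^2/(C_2T^{1/2})$ by the factor $\sqrt{(T+1)/T}$, so calling it cosmetic leaves the stated inequality unproved.

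Nor can you "apply the proposition with an index shift": it is phrased in terms of $\rho_T$, so you must reopen its proof. One clean fix mirrors the proof of \cref{prop:generic-lmo-restate}:
\begin{enumerate}
\item telescope only over $t=1,\dots,T-1$;
\item keep the term $-\|x_T-x\|_2^2/(2\rho_{T-1})$;
\item absorb the last term via $(g^\x_T)^\top(x_T-x)\le \rho_{T-1}\|g^\x_T\|_2^2/2+\|x_T-x\|_2^2/(2\rho_{T-1})$.
\end{enumerate}
This gives
\[
\sum_{t=1}^{T}(g^\x_t)^\top(x_t-x)\le \frac{2R_\cX^2}{\rho_{T-1}}+\sum_{t=1}^{T-1}\frac{\rho_t\|g^\x_t\|_2^2}{2}+\frac{\rho_{T-1}\|g^\x_T\|_2^2}{2}.
\]
Now use $\rho_{T-1}=C_2T^{-1/2}$ and $\sum_{t=1}^{T-1}\rho_t+\rho_{T-1}\le C_2\bigl(2T^{1/2}-2+T^{-1/2}\bigr)\le 2C_2T^{1/2}$. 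Dividing by $T$ gives exactly $\bigl(2R_\cX^2/C_2+C_2G_\x^2\bigr)/T^{1/2}$, and the martingale term is unchanged.

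Alternatively, since $x_t,x\in\cX\subseteq\myoverline{\cX}$, you have $\|x_t-x\|_2\le R_\cX$ rather than $2R_\cX$. The proposition's first term then improves to $R_\cX^2/(2\rho_T)$, and $\tfrac12\sqrt{(T+1)/T}\le 2$ absorbs the discrepancy.
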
 

The proof relies on the following lemma, which bounds the strong saddle gap for arbitrary monotone parameter sequences using \cref{prop:generic-lmo-restate,prop:generic-po-restate}.

\begin{lemma}
\label{cor:saddle-gap-lmo-po}
    Suppose Assumptions~\ref{ass:extended} and~\ref{ass:stochastic-oracle} hold.
    \begin{enumerate}[label=(\roman*),leftmargin=2em]
        \item\label{cor:saddle-gap-lmo-po:lmo-po}
        Let $\{x_t,y_t\}_{t\geq 1}$ be the primal-dual sequence generated by \cref{alg:lmo-po} with parameters satisfying $\alpha_t\leq\alpha_{t+1}$, $\eta_t\geq\eta_{t+1}$, and $\gamma_t\geq\gamma_{t+1}$ for all $t\geq 1$. Then, for every $T\geq 2$,
        \begin{align*}
            &\E\left[\max_{y\in\cY}f(\bar{x}_T,y)-\min_{x\in\cX}f(x,\bar{y}_T)\right] \\
            &\leq
            \frac{2}{T}\left(\alpha_{T-1}+\sum_{t=1}^{T-1}\eta_t\right)R_{\cX}^2
            +\frac{G_\x^2}{2\alpha_{T-1}T}
            +\frac{1}{T}\sum_{t=1}^{T-1}\frac{G_\x^2}{2\alpha_t}
            +\frac{G_\x^2}{2T\eta_{T-1}}
            +\frac{2G_\x R_{\cX}}{T^{1/2}} \\
            &\quad+
            \frac{2R_{\cY}^2}{\gamma_TT}
            +\frac{1}{T}\sum_{t=1}^{T}\frac{\gamma_tG_\y^2}{2}
            +\frac{2G_\y R_{\cY}}{T^{1/2}}.
        \end{align*}
        \item\label{cor:saddle-gap-lmo-po:po-lmo}
        Let $\{x_t,y_t\}_{t\geq 1}$ be the primal-dual sequence generated by \cref{alg:po-lmo} with parameters satisfying $\rho_t\geq\rho_{t+1}$, $\beta_t\leq\beta_{t+1}$, and $\tau_t\geq\tau_{t+1}$ for all $t\geq 1$. Then, for every $T\geq 2$,
        \begin{align*}
            &\E\left[\max_{y\in\cY}f(\bar{x}_T,y)-\min_{x\in\cX}f(x,\bar{y}_T)\right] \\
            &\leq
            \frac{2R_{\cX}^2}{\rho_TT}
            +\frac{1}{T}\sum_{t=1}^{T}\frac{\rho_tG_\x^2}{2}
            +\frac{2G_\x R_{\cX}}{T^{1/2}} \\
            &\quad+
            \frac{2}{T}\left(\beta_{T-1}+\sum_{t=1}^{T-1}\tau_t\right)R_{\cY}^2
            +\frac{G_\y^2}{2\beta_{T-1}T}
            +\frac{1}{T}\sum_{t=1}^{T-1}\frac{G_\y^2}{2\beta_t}
            +\frac{G_\y^2}{2T\tau_{T-1}}
            +\frac{2G_\y R_{\cY}}{T^{1/2}}.
        \end{align*}
    \end{enumerate}
\end{lemma}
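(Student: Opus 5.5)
We prove \cref{cor:saddle-gap-lmo-po} by mimicking the proof of \cref{cor:saddle-gap-lmo-lmo}. The LMO-based side is handled through \cref{prop:generic-lmo-restate}, which keeps the quadratic term $\|\lambda_T/\eta_T\|_2^2$. The PO-based side is handled through \cref{prop:generic-po-restate}; it carries no such term, but also produces no Lipschitz correction of its own. We detail part~\ref{cor:saddle-gap-lmo-po:lmo-po}; part~\ref{cor:saddle-gap-lmo-po:po-lmo} follows by the symmetric exchange of roles.

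For \cref{alg:lmo-po}, as before $v_1=x_1$ and $\lambda_1=0$, so unrolling the $\lambda$-recursion gives $\lambda_T/\eta_T=T(\bar v_T-\bar x_T)$, with $\bar v_T:=\frac1T\sum_{t\le T}v_t$.

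\textbf{Primal side.} We apply \cref{prop:generic-lmo-restate}\,\ref{prop:generic-lmo:primal} with $z_t=y_t$. Concavity of $f(x,\cdot)$ gives $\frac1T\sum_t f(x,y_t)\le f(x,\bar y_T)$, so $-\min_x f(x,\bar y_T)\le \max_x\frac1T\sum_t(-f(x,y_t))$. No Lipschitz step is needed here, because $\bar y_T$ is itself the average of the $y_t$. Dividing by $T$ yields
\[
\E\Big[\tfrac{\eta_{T-1}}{2T}\|\lambda_T/\eta_T\|_2^2+\tfrac1T\textstyle\sum_t f(v_t,y_t)-\min_{x\in\cX} f(x,\bar y_T)\Big]
\]
bounded by the $\cX$-terms of the claim, excluding the $G_\x^2/(2T\eta_{T-1})$ term.

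\textbf{Dual side.} We apply \cref{prop:generic-po-restate}\,\ref{prop:generic-po:dual} with $w_t=v_t$. Convexity of $f(\cdot,y)$ gives $\frac1T\sum_t f(v_t,y)\ge f(\bar v_T,y)$. The Lipschitz bound on $\myoverline{\cX}$ then gives $f(\bar v_T,y)\ge f(\bar x_T,y)-\frac{G_\x}{T}\|\lambda_T/\eta_T\|_2$. Together these yield
\[
\E\Big[\max_{y\in\cY} f(\bar x_T,y)-\tfrac{G_\x}{T}\|\lambda_T/\eta_T\|_2-\tfrac1T\textstyle\sum_t f(v_t,y_t)\Big]\le \tfrac{2R_\cY^2}{\gamma_T T}+\tfrac1T\sum_t\tfrac{\gamma_tG_\y^2}{2}+\tfrac{2G_\y R_\cY}{T^{1/2}}.
\]

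\textbf{Combining.} Adding the two bounds cancels $\frac1T\sum_t f(v_t,y_t)$. The remaining $\lambda$-terms are controlled by $as^2/2-bs\ge -b^2/(2a)$ with $a=\eta_{T-1}/T$ and $b=G_\x/T$, which contributes $-G_\x^2/(2T\eta_{T-1})$. Moving this to the right-hand side gives the claimed bound.

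\textbf{Part~\ref{cor:saddle-gap-lmo-po:po-lmo}.} This is symmetric. Use $\mu_T/\tau_T=T(\bar u_T-\bar y_T)$, and apply \cref{prop:generic-po-restate}\,\ref{prop:generic-po:primal} with $z_t=u_t$ together with concavity and Lipschitzness in $y$, which produces $-\frac{G_\y}{T}\|\mu_T/\tau_T\|_2$. Then apply \cref{prop:generic-lmo-restate}\,\ref{prop:generic-lmo:dual} with $w_t=x_t$ and convexity of $f(\cdot,y)$ in the $x_t$. Here $\bar x_T$ is exactly the average of the $x_t$, so no correction arises. Absorbing the $\mu$-terms as above gives $-G_\y^2/(2T\tau_{T-1})$.

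\textbf{Main obstacle.} The only delicate point is bookkeeping: the comparison points and evaluation sequences must match so that the payoff averages cancel exactly. In part (i) the primal regret is evaluated at $(v_t,y_t)$ and the dual regret at $w_t=v_t$, which is precisely why \cref{prop:generic-po-restate} is stated with $w_t=v_t$. A second check is that the Lipschitz constant $G_\x$ applies on $\myoverline{\cX}$, where $\bar v_T$ lives; this is used exactly as in \cref{cor:saddle-gap-lmo-lmo}.
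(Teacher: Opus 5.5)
Your proposal is correct and follows the paper's proof essentially step for step. You unroll the recursion to get $\lambda_T/\eta_T=T(\bar v_T-\bar x_T)$; on the primal side you apply \cref{prop:generic-lmo-restate}~(i) with $z_t=y_t$ plus concavity in $y$; on the dual side you apply \cref{prop:generic-po-restate}~(ii) with $w_t=v_t$ plus convexity and the $G_\x$-Lipschitz correction; then you add the two bounds and absorb the $\lambda$-terms via $as^2/2-bs\geq -b^2/(2a)$. Your explicit sketch of part (ii), where the Lipschitz correction moves to the PO-based primal inequality, is exactly the symmetric argument the paper leaves implicit.
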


\begin{proof}[Proof of \cref{cor:saddle-gap-lmo-po}]
    By symmetry, it suffices to prove part~\ref{cor:saddle-gap-lmo-po:lmo-po}. The proof of part~\ref{cor:saddle-gap-lmo-po:po-lmo} follows from the same argument after exchanging the primal and dual roles.
    
    Define $\bar v_T:=\frac{1}{T}\sum_{t=1}^{T}v_t$. Since $v_1=x_1$, the LMO recursion implies
    \begin{align*}
        \frac{\lambda_T}{\eta_T}
        =
        \sum_{t=1}^{T}(v_t-x_t)
        =
        T(\bar v_T-\bar x_T).
    \end{align*}
    We first control the primal side. By convexity of $(-f)_x(\cdot)$, for every $x\in\cX$, we have $\frac{1}{T}\sum_{t=1}^{T}-f(x,y_t)\geq -f(x,\bar y_T)$.
    Applying \cref{prop:generic-lmo-restate}\,\ref{prop:generic-lmo:primal} with $z_t=y_t$ gives
    \begin{equation}
    \label{eq:primal-gap-lmo-po}
    \begin{aligned}
        &\E\left[
            \frac{\eta_{T-1}}{2T}\left\|\frac{\lambda_T}{\eta_T}\right\|_2^2
            +
            \frac{1}{T}\sum_{t=1}^{T}f(v_t,y_t)
            -
            \min_{x\in\cX}f(x,\bar y_T)
        \right] \\
        &\leq
        \frac{2}{T}\left(\alpha_{T-1}+\sum_{t=1}^{T-1}\eta_t\right)R_{\cX}^2
        +\frac{G_\x^2}{2\alpha_{T-1}T}
        +\frac{1}{T}\sum_{t=1}^{T-1}\frac{G_\x^2}{2\alpha_t}
        +\frac{2G_\x R_{\cX}}{T^{1/2}}.
    \end{aligned}
    \end{equation}
    We next control the dual side. By convexity of $f_y(\cdot)$, we have $\frac{1}{T}\sum_{t=1}^{T}f(v_t,y)\geq f(\bar v_T,y)$.
    Since $f_y(\cdot)$ is $G_\x$-Lipschitz on $\myoverline{\cX}$,
    \begin{align*}
        f(\bar v_T,y)
        \geq
        f(\bar x_T,y)-G_\x\|\bar v_T-\bar x_T\|_2
        =
        f(\bar x_T,y)-\frac{G_\x}{T}\left\|\frac{\lambda_T}{\eta_T}\right\|_2.
    \end{align*}
    Applying \cref{prop:generic-po-restate}\,\ref{prop:generic-po:dual} with $w_t=v_t$ gives
    \begin{equation}
    \label{eq:dual-gap-lmo-po}
    \begin{aligned}
        &\E\left[
            -\frac{G_\x}{T}\left\|\frac{\lambda_T}{\eta_T}\right\|_2
            +
            \max_{y\in\cY}f(\bar x_T,y)
            -
            \frac{1}{T}\sum_{t=1}^{T}f(v_t,y_t)
        \right] \\
        &\leq
        \frac{2R_{\cY}^2}{\gamma_TT}
        +\frac{1}{T}\sum_{t=1}^{T}\frac{\gamma_tG_\y^2}{2}
        +\frac{2G_\y R_{\cY}}{T^{1/2}}.
    \end{aligned}
    \end{equation}
    Adding~\eqref{eq:primal-gap-lmo-po} and~\eqref{eq:dual-gap-lmo-po} cancels the average payoff term. It remains to lower bound the quadratic expression in $\lambda_T$. For any $a>0$, $b\geq 0$, and $s\geq 0$, we use $as^2/2-bs\geq -b^2/(2a)$. Hence,
    \begin{align*}
        \frac{\eta_{T-1}}{2T}\left\|\frac{\lambda_T}{\eta_T}\right\|_2^2
        -
        \frac{G_\x}{T}\left\|\frac{\lambda_T}{\eta_T}\right\|_2
        \geq
        -\frac{G_\x^2}{2T\eta_{T-1}}.
    \end{align*}
    Combining this bound with~\eqref{eq:primal-gap-lmo-po} and~\eqref{eq:dual-gap-lmo-po} proves part~\ref{cor:saddle-gap-lmo-po:lmo-po}, and therefore completes the proof.
\end{proof}

\begin{proof}[Proof of \cref{theorem:convergence-lmo-po-2,theorem:convergence-po-lmo-2}]
    We prove the two detailed theorems by substituting the stated parameter choices into \cref{cor:saddle-gap-lmo-po}. For the LMO-PO case, the parameters satisfy
    \begin{align*}
        \alpha_{T-1}=C_1T^{1/2},
        \qquad
        \eta_{T-1}=C_3T^{-1/2},
        \qquad
        \gamma_T=C_2T^{-1/2}.
    \end{align*}
    Moreover,
    \begin{align*}
        \sum_{t=1}^{T-1}\eta_t
        \leq 2C_3T^{1/2},
        \qquad
        \sum_{t=1}^{T-1}\frac{1}{\alpha_t}
        \leq \frac{2T^{1/2}}{C_1},
        \qquad
        \sum_{t=1}^{T}\gamma_t
        \leq 2C_2T^{1/2}.
    \end{align*}
    Substituting these estimates into \cref{cor:saddle-gap-lmo-po}\,\ref{cor:saddle-gap-lmo-po:lmo-po} gives the desired bound in \cref{theorem:convergence-lmo-po-2}.
    
    The proof of \cref{theorem:convergence-po-lmo-2} is identical, using \cref{cor:saddle-gap-lmo-po}\,\ref{cor:saddle-gap-lmo-po:po-lmo} together with
    \begin{align*}
        \rho_T=C_2T^{-1/2},
        \qquad
        \beta_{T-1}=C_1T^{1/2},
        \qquad
        \tau_{T-1}=C_3T^{-1/2},
    \end{align*}
    and the analogous bounds
    \begin{align*}
        \sum_{t=1}^{T}\rho_t
        \leq 2C_2T^{1/2},
        \qquad
        \sum_{t=1}^{T-1}\tau_t
        \leq 2C_3T^{1/2},
        \qquad
        \sum_{t=1}^{T-1}\frac{1}{\beta_t}
        \leq \frac{2T^{1/2}}{C_1}.
    \end{align*}
    gives the desired bound in \cref{theorem:convergence-po-lmo-2}.
\end{proof}

\subsection{Strong Saddle-Gap Guarantee for the PO-PO Baseline}

We first state the detailed convergence guarantee for \cref{alg:po-po}. This result is not part of \cref{theorem:convergence}, but it follows from the same framework and gives the standard PO-PO benchmark in the strong saddle-gap metric.

\begin{theorem} \label{theorem:convergence-po-po-2}
    Suppose Assumptions~\ref{ass:extended} and~\ref{ass:stochastic-oracle} hold. Let $\{x_t,y_t\}_{t\geq 1}$ be the primal-dual sequence generated by \cref{alg:po-po} with parameters
    \begin{align*}
        \rho_t=C_1 t^{-1/2},
        \qquad
        \gamma_t=C_2t^{-1/2}
    \end{align*}
    for some $C_1, C_2 > 0$.
    Then, for every $T\geq 1$,
        \begin{align*}
            \E \left[ \max_{y \in \cY}f(\bar{x}_T,y) - \min_{x \in \cX}f(x,\bar{y}_T) \right]
            \leq 
            \frac{
                \frac{2R_{\cX}^2}{C_1}
                +C_1G_\x^2
                +2G_\x R_{\cX}
            }{T^{1/2}}
            + 
            \frac{
                \frac{2R_{\cY}^2}{C_2}
                +C_2G_\y^2
                +2G_\y R_{\cY}
            }{T^{1/2}}.
        \end{align*}
        If $C_1=G_\x/R_{\cX}$ and $C_2 = G_\y/R_{\cY}$ then the bound becomes
     \begin{align*}
        &\E\left[\max_{y\in\cY}f(\bar{x}_T,y)-\min_{x\in\cX}f(x,\bar{y}_T)\right] \leq \frac{5(G_{\x}R_{\cX}+G_{\y}R_{\cY})}{T^{1/2}}
    \end{align*}
\end{theorem}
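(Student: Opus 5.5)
The plan is to combine the two parts of \cref{prop:generic-po-restate}, exactly as was done for the LMO-based variants in \cref{cor:saddle-gap-lmo-lmo,cor:saddle-gap-lmo-po}. The PO-PO case is simpler because no auxiliary sequences $v_t,u_t$ are involved: the oracle is evaluated at $(x_t,y_t)$ and the outputs are averages of these same iterates. Consequently there is no discrepancy term $\lambda_T/\eta_T$ or $\mu_T/\tau_T$ to absorb. The parameter sequences $\rho_t=C_1t^{-1/2}$ and $\gamma_t=C_2t^{-1/2}$ are nonincreasing, so both parts of \cref{prop:generic-po-restate} apply for every $T\ge 1$.

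\textbf{Primal side.} Apply part~\ref{prop:generic-po:primal} with $z_t=y_t$. By concavity of $f(x,\cdot)$, for each fixed $x\in\cX$ we have $\frac1T\sum_t f(x,y_t)\le f(x,\bar y_T)$. Hence $\max_{x}\sum_t\bigl(f(x_t,y_t)-f(x,y_t)\bigr)\ge \sum_t f(x_t,y_t)-T\min_x f(x,\bar y_T)$. Taking expectations and dividing by $T$ gives
\[
\E\Bigl[\tfrac1T\textstyle\sum_t f(x_t,y_t)-\min_x f(x,\bar y_T)\Bigr]\le \tfrac{1}{T}\Bigl(\tfrac{2R_{\cX}^2}{\rho_T}+\textstyle\sum_t\tfrac{\rho_tG_\x^2}{2}\Bigr)+\tfrac{2G_\x R_{\cX}}{T^{1/2}}.
\]

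\textbf{Dual side.} Symmetrically, apply part~\ref{prop:generic-po:dual} with $w_t=x_t$. Convexity of $f(\cdot,y)$ gives $\frac1T\sum_t f(x_t,y)\ge f(\bar x_T,y)$, which yields the mirror bound for $\E[\max_y f(\bar x_T,y)-\frac1T\sum_t f(x_t,y_t)]$.

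\textbf{Combining.} Adding the two bounds cancels the average payoff $\frac1T\sum_t f(x_t,y_t)$ and leaves the strong gap. It then remains to substitute the parameters. We have $1/\rho_T=T^{1/2}/C_1$, so $\frac{2R_{\cX}^2}{\rho_T T}=\frac{2R_{\cX}^2}{C_1T^{1/2}}$. We also have $\sum_{t=1}^T t^{-1/2}\le 2T^{1/2}$, so $\frac1T\sum_t\frac{\rho_tG_\x^2}{2}\le \frac{C_1G_\x^2}{T^{1/2}}$. The same estimates hold for $\gamma_t$. For the specialized constants, setting $C_1=G_\x/R_{\cX}$ makes each of the three primal terms equal to $2,1,2$ times $G_\x R_{\cX}$, for a total of $5G_\x R_{\cX}$, and likewise on the dual side.

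There is no real obstacle here. The only point needing care is that the maximum over $x$ (respectively $y$) sits inside the expectation in \cref{prop:generic-po-restate}, and this is what delivers the strong rather than the weak gap. That is already handled by the martingale bound~\eqref{eq:martingale}. One should also check that the convexity steps produce the pointwise inequality before the max/min is taken.
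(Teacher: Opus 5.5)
Your derivation of the general bound is the paper's proof. You combine both parts of \cref{prop:generic-po-restate} with $z_t=y_t$ and $w_t=x_t$, apply convexity and concavity pointwise before taking the max and min, and add the two bounds to cancel $\frac1T\sum_t f(x_t,y_t)$; this is \cref{cor:saddle-gap-po-po}. You then substitute $\rho_T=C_1T^{-1/2}$ and $\sum_{t\le T}t^{-1/2}\le 2T^{1/2}$. That part is correct.

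Your final step, however, is false. With $C_1=G_\x/R_{\cX}$ the three primal terms are:
\begin{itemize}
\item $2R_{\cX}^2/C_1=2R_{\cX}^3/G_\x$,
\item $C_1G_\x^2=G_\x^3/R_{\cX}$,
\item $2G_\x R_{\cX}$.
\end{itemize}
These do not sum to $5G_\x R_{\cX}$ in general. The ratio $2,1,2$ requires $C_1=R_{\cX}/G_\x$ and $C_2=R_{\cY}/G_\y$. Dimensional analysis also forces this, since $\rho_t$ multiplies a subgradient to produce a displacement.

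With the constants as written, the second claim is not merely unproven but false. Take $\cX=\myoverline{\cX}=[0,100]$, $\cY=\myoverline{\cY}=[0,1]$, $f(x,y)=x$, the deterministic oracle $(g^\x,g^\y)=(1,0)$, $G_\x=G_\y=1$, $x_1=100$ and $T=100$. Then $\rho_t=t^{-1/2}/100$, so every iterate satisfies $x_t\ge 100-2T^{1/2}/100=99.8$. The strong gap equals $\bar x_T\ge 99.8$, whereas the claimed bound is $5(100+1)/10=50.5$.

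The paper's proof only says ``substituting these estimates gives the desired bound'' and misses this too. The specialization in the statement should read $C_1=R_{\cX}/G_\x$ and $C_2=R_{\cY}/G_\y$. The same inversion affects the PO stepsize constants in \cref{theorem:convergence-lmo-po-2,theorem:convergence-po-lmo-2}; the LMO-side constants $G/R$ there are correct. You asserted the arithmetic for this step without checking it, and that is exactly where the statement breaks.
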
 

The proof relies on the following lemma, which bounds the strong saddle gap for arbitrary monotone parameter sequences using \cref{prop:generic-po-restate}.

\begin{lemma} \label{cor:saddle-gap-po-po}
    Suppose Assumptions~\ref{ass:extended} and~\ref{ass:stochastic-oracle} hold. Let $\{x_t,y_t\}_{t\geq 1}$ be the primal-dual sequence generated by \cref{alg:po-po} with parameters satisfying $\rho_t\geq\rho_{t+1}$ and $\gamma_t\geq\gamma_{t+1}$ for all $t\geq 1$. Then, for every $T\geq 1$,
    \begin{align*}
        &\E\left[\max_{y\in\cY}f(\bar{x}_T,y)-\min_{x\in\cX}f(x,\bar{y}_T)\right] \\
        &\leq
        \frac{2R_{\cX}^2}{\rho_TT}
        +\frac{1}{T}\sum_{t=1}^{T}\frac{\rho_tG_\x^2}{2}
        +\frac{2G_\x R_{\cX}}{T^{1/2}}
        +\frac{2R_{\cY}^2}{\gamma_TT}
        +\frac{1}{T}\sum_{t=1}^{T}\frac{\gamma_tG_\y^2}{2}
        +\frac{2G_\y R_{\cY}}{T^{1/2}}.
    \end{align*}
\end{lemma}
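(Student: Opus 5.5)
The plan is to combine the two parts of \cref{prop:generic-po-restate} and reduce the averaged regret bounds to the strong saddle gap at the averaged iterates. This is the same approach as \cref{cor:saddle-gap-lmo-po}, but simpler: in \cref{alg:po-po} the evaluation points coincide with the output iterates. Both $x_t\in\cX$ and $y_t\in\cY$, so there is no auxiliary sequence, no $\lambda_T$ or $\mu_T$ term, and no Lipschitz correction is needed.

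First, I apply \cref{prop:generic-po-restate}\,\ref{prop:generic-po:primal} with $z_t=y_t$ and divide by $T$. For every $x\in\cX$, concavity of $f(x,\cdot)$ (convexity of $(-f)_x$) gives $\frac{1}{T}\sum_{t=1}^T f(x,y_t)\le f(x,\bar y_T)$. Hence
\begin{align*}
\frac1T\sum_{t=1}^T f(x_t,y_t)-\min_{x\in\cX} f(x,\bar y_T)\le \max_{x\in\cX}\frac1T\sum_{t=1}^T\big(f(x_t,y_t)-f(x,y_t)\big).
\end{align*}
Taking expectations, the right-hand side is at most $\frac{2R_{\cX}^2}{\rho_T T}+\frac1T\sum_t\frac{\rho_tG_\x^2}{2}+\frac{2G_\x R_{\cX}}{T^{1/2}}$.

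Second, I apply \cref{prop:generic-po-restate}\,\ref{prop:generic-po:dual} with $w_t=x_t$. Convexity of $f_y$ gives $f(\bar x_T,y)\le\frac1T\sum_t f(x_t,y)$ for every $y\in\cY$. Therefore
\begin{align*}
\max_{y\in\cY} f(\bar x_T,y)-\frac1T\sum_{t=1}^T f(x_t,y_t)\le \max_{y\in\cY}\frac1T\sum_{t=1}^T\big(f(x_t,y)-f(x_t,y_t)\big),
\end{align*}
and this is bounded in expectation by the $\cY$-analogue $\frac{2R_{\cY}^2}{\gamma_T T}+\frac1T\sum_t\frac{\gamma_tG_\y^2}{2}+\frac{2G_\y R_{\cY}}{T^{1/2}}$.

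Adding the two displays cancels the common term $\frac1T\sum_t f(x_t,y_t)$. By linearity of expectation, this yields exactly the stated bound for every $T\ge1$.

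I expect no real obstacle. The one point to check is that the maximum over $x$ (respectively $y$) is taken inside the expectation in \cref{prop:generic-po-restate}. It is, and this is what makes the resulting guarantee a strong rather than weak gap bound. The monotonicity hypotheses $\rho_t\ge\rho_{t+1}$ and $\gamma_t\ge\gamma_{t+1}$ are exactly those required by \cref{prop:generic-po-restate}.
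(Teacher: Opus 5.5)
Your proposal is correct and follows essentially the same route as the paper. The paper likewise applies \cref{prop:generic-po-restate}\,\ref{prop:generic-po:primal} with $z_t=y_t$ using concavity in $y$, and part~\ref{prop:generic-po:dual} with $w_t=x_t$ using convexity in $x$, then adds the two bounds so that the average payoff term $\frac{1}{T}\sum_{t=1}^T f(x_t,y_t)$ cancels.
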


\begin{proof}[Proof of \cref{cor:saddle-gap-lmo-po}]
    We first control the primal side. By convexity of $(-f)_x(\cdot)$, for every $x\in\cX$, we have $\frac{1}{T}\sum_{t=1}^{T}f(x,y_t)\leq f(x,\bar y_T)$.
    Therefore,
    \begin{align*}
        \max_{x\in\cX} \, \frac{1}{T}\sum_{t=1}^{T}f(x_t,y_t)-f(x,\bar y_T) 
        \leq
        \frac{1}{T}\max_{x\in\cX}\sum_{t=1}^{T}\bigl(f(x_t,y_t)-f(x,y_t)\bigr).
    \end{align*}
    Applying \cref{prop:generic-po-restate}\,\ref{prop:generic-po:primal} with $z_t=y_t$ gives
    \begin{equation}
    \label{eq:primal-gap-po-po}
    \begin{aligned}
        \E\left[
            \frac{1}{T}\sum_{t=1}^{T}f(x_t,y_t)-\min_{x\in\cX}f(x,\bar y_T)
        \right] 
        \leq
        \frac{2R_{\cX}^2}{\rho_TT}
        +\frac{1}{T}\sum_{t=1}^{T}\frac{\rho_tG_\x^2}{2}
        +\frac{2G_\x R_{\cX}}{T^{1/2}}.
    \end{aligned}
    \end{equation}
    We next control the dual side. By convexity of $f(\cdot,y)$, for every $y\in\cY$, we have $f(\bar x_T,y)\leq \frac{1}{T}\sum_{t=1}^{T}f(x_t,y)$.
    Hence,
    \begin{align*}
        \max_{y\in\cY}f(\bar x_T,y)-\frac{1}{T}\sum_{t=1}^{T}f(x_t,y_t)
        &\leq
        \frac{1}{T}\max_{y\in\cY}\sum_{t=1}^{T}\bigl(f(x_t,y)-f(x_t,y_t)\bigr).
    \end{align*}
    Applying \cref{prop:generic-po-restate}\,\ref{prop:generic-po:dual} with $w_t=x_t$ gives
    \begin{equation}
    \label{eq:dual-gap-po-po}
    \begin{aligned}
        \E\left[
            \max_{y\in\cY}f(\bar x_T,y)-\frac{1}{T}\sum_{t=1}^{T}f(x_t,y_t)
        \right] 
        \leq
        \frac{2R_{\cY}^2}{\gamma_TT}
        +\frac{1}{T}\sum_{t=1}^{T}\frac{\gamma_tG_\y^2}{2}
        +\frac{2G_\y R_{\cY}}{T^{1/2}}.
    \end{aligned}
    \end{equation}
    Adding~\eqref{eq:primal-gap-po-po} and~\eqref{eq:dual-gap-po-po} cancels the average payoff term and proves the lemma.
\end{proof}

\begin{proof}[Proof of \cref{theorem:convergence-po-po-2}]
    The parameter choices satisfy the monotonicity conditions in \cref{cor:saddle-gap-po-po}. Moreover,
    \begin{align*}
        \rho_T=C_1T^{-1/2},
        \qquad
        \gamma_T=C_2T^{-1/2}.
    \end{align*}
    We also use
    \begin{align*}
        \sum_{t=1}^{T}\rho_t
        \leq 2C_1T^{1/2},
        \qquad
        \sum_{t=1}^{T}\gamma_t
        \leq 2C_2T^{1/2}.
    \end{align*}
    Substituting these estimates into \cref{cor:saddle-gap-po-po} gives the desired bound.
\end{proof}

\subsection{Proofs of \texorpdfstring{Theorem~\protect\ref{thm:lower-bound-lmo-lmo}}{Theorem~2}}

For the convenience, we repeat the statement of~\cref{thm:lower-bound-lmo-lmo}.

\begin{theorem}[Restatement of Theorem~\ref{thm:lower-bound-lmo-lmo}]
\label{thm:lower-bound-lmo-lmo-repeat}
    Let $G_\x>0,G_\y>0, R_{\cX} >0, R_{\cY}>0$ be fixed. Then, for any generic LMO-LMO method of the form \cref{alg:generic-lmo-lmo} running for $T \geq 1$ iterations, there exist convex compact subsets $\cX \subseteq \R^n, \cY \subseteq \R^m$ with radii $R_{\cX}$ and $R_{\cY}$, respectively, and a function $f \in \cF_{G_\x,G_\y,\|\cdot\|_2}^0(\cX,\cY)$ with $T \leq  \min\{m,n\}/4-1$ such that
    \begin{align*}
        \max_{y\in\cY}f(\bar x_T,y)-\min_{x\in\cX}f(x,\bar y_T)
        \geq
        \frac{G_\x R_{\cX}}{2(T+1)^{1/2}}
        +
        \frac{G_\y R_{\cY}}{2(T+1)^{1/2}}.
    \end{align*}
\end{theorem}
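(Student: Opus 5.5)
We follow the construction of \citet{nesterov2013introductory} and \citet{lan2013complexity} and take a \emph{separable} payoff
\begin{align*}
    f(x,y) = h(x) - k(y),
\end{align*}
with $h$ convex and $k$ convex. For such $f$ the saddle gap splits exactly:
\begin{align*}
    \max_{y\in\cY}f(\bar x_T,y)-\min_{x\in\cX}f(x,\bar y_T) = \bigl(h(\bar x_T)-\min_{\cX}h\bigr) + \bigl(k(\bar y_T)-\min_{\cY}k\bigr).
\end{align*}
Moreover, in \cref{alg:generic-lmo-lmo} the primal and dual LMO calls act on separate sets, so it suffices to prove one single-block bound and apply it twice. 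Namely, for any LMO-based method over $\cX\subseteq\R^n$ that outputs a point of $\conv\{x_1,\dots,x_{T+1}\}$, we want an $h$ that is $G_\x$-Lipschitz with $h(\bar x_T)-\min_\cX h\ge G_\x R_{\cX}/(2(T+1)^{1/2})$. The same statement for $(k,\cY,G_\y,R_{\cY})$ then gives the second term. Lipschitzness of $f$ in each block is inherited from $h$ and $k$.

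\textbf{Single-block construction.} Let $\cX$ be the scaled simplex (or the convex hull of scaled coordinate vectors) $\cX=\conv\{R e_1,\dots,R e_n\}$, with $R$ tuned so that the radius in the paper's sense equals $R_{\cX}$. Take
\begin{align*}
    h(x)=G\max_{i\in[n]} x_i \cdot c,
\end{align*}
with $c$ a normalizing constant, possibly via a suitable $\|\cdot\|_2$-Lipschitz variant such as $G\max_i x_i$, which is $G$-Lipschitz in $\ell_2$. The minimum over $\cX$ is attained at the barycenter, where $\min_\cX h = GR/n$.

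By choosing the tie-breaking rule, the adversary forces every LMO output to be a vertex $Re_i$. Hence $\conv\{x_1,\dots,x_{T+1}\}$ lies in a face spanned by at most $T+1$ vertices. Any point $\bar x$ in that face has $\max_i \bar x_i \ge R/(T+1)$, so
\begin{align*}
    h(\bar x)-\min_\cX h \ge GR\Bigl(\frac{1}{T+1}-\frac1n\Bigr)\ge \frac{GR}{2(T+1)}
\end{align*}
when $n\ge 2(T+1)$. This gives only a $1/T$ rate. To obtain $1/\sqrt T$, the function must be scaled so that its $\ell_2$-Lipschitz constant relative to the geometry matters. The standard fix, following Lan, is $h(x)=G\sqrt{T+1}\,\max_i x_i$ restricted to a region, or equivalently working on a simplex of $\ell_2$-diameter $\approx R\sqrt2$ while measuring with a function like $G\|x\|_2$ or $G\max_i x_i$. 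The cleanest route is to use $h(x)=G\|x\|_2$ on $\cX=\conv\{Re_i\}$. Its minimum is $GR/\sqrt n$, and on a face with $T+1$ vertices we have $h\ge GR/\sqrt{T+1}$. With $n\ge 4(T+1)$ this yields
\begin{align*}
    h(\bar x)-\min h\ge GR\Bigl(\frac{1}{\sqrt{T+1}}-\frac{1}{\sqrt n}\Bigr)\ge \frac{GR}{2\sqrt{T+1}},
\end{align*}
which explains the hypothesis $T\le \min\{m,n\}/4-1$.

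\textbf{Order of steps.} First, fix $\cX=\conv\{R_{\cX}e_i\}_{i\le n}$ and $\cY=\conv\{R_{\cY}e_j\}_{j\le m}$, and check their radii, adjusting the scaling constant as needed. Second, set $f(x,y)=G_\x\|x\|_2-G_\y\|y\|_2$; this is convex-concave and has the required Lipschitz constants. Third, fix $x_1=R_{\cX}e_1$ and $y_1=R_{\cY}e_1$, and make the LMO return vertices, so each call adds at most one new coordinate. Fourth, use the separability identity above. Fifth, apply the norm bound on a $(T+1)$-sparse simplex point, which by Cauchy--Schwarz gives $\|\bar x\|_2\ge R/\sqrt{T+1}$. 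The main obstacle is the first step: reconciling the paper's radius definition, which uses $\myoverline{\cX}$, with the simplex diameter, so that the constant comes out as exactly $1/2$. We expect to handle this by choosing $\myoverline{\cX}=\cX$ and rescaling the vertices accordingly.
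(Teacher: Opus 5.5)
Your final construction ($\cX=R_{\cX}\Delta_n$, $\cY=R_{\cY}\Delta_m$, $f(x,y)=G_\x\|x\|_2-G_\y\|y\|_2$, vertex-returning LMOs started at $R_{\cX}e_1$ and $R_{\cY}e_1$, the separable split of the gap, and $\|\bar x_T\|_2\ge R_{\cX}/(T+1)^{1/2}$ against $\min_{\cX}\|x\|_2=R_{\cX}/n^{1/2}$ with $n\ge 4(T+1)$) is exactly the paper's proof. The paper simply takes $\cX=R_{\cX}\Delta_n$, reading ``radius'' as $\max_{x\in\cX}\|x\|_2=R_{\cX}$; if you instead rescale the vertices to $R_{\cX}/\sqrt{2}$ so that the diameter in \cref{ass:extended} equals $R_{\cX}$, you lose a factor $\sqrt{2}$ unless you also enlarge the dimension (e.g.\ $n,m\ge 12(T+1)$, which is still compatible with $T\le\min\{m,n\}/4-1$).
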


\begin{proof}[Proof of \cref{thm:lower-bound-lmo-lmo-repeat}]
    Fix $T\geq 1$, and choose dimensions $n,m$ such that $n,m\geq 4(T+1)$. Let
    \begin{align*}
        \cX:=R_{\cX} \cdot \Delta_n,
        \qquad
        \cY:=R_{\cY} \cdot \Delta_m,
    \end{align*}
    where $\Delta_k:=\{z\in\R_+^k:\sum_{i=1}^{k}z_i=1\}$. The adversary initializes the method at the extreme points $x_1=R_{\cX} \cdot e_1$ and $y_1=R_{\cY} \cdot e_1$, and fixes LMO tie-breaking rules that always return extreme-point minimizers. This is valid because a linear objective over a simplex always admits an extreme-point minimizer.
    Consider the payoff function
    \begin{align*}
        f(x,y):=G_\x\|x\|_2-G_\y\|y\|_2.
    \end{align*}
    The function $f$ is convex in $x$ and concave in $y$. 
    It is also easy to verify that $f\in\cF_{G_\x,G_\y,\|\cdot\|_2}^0(\cX,\cY)$.
    
    Since $x_1$ is an extreme point of $\cX$ and each primal LMO call returns an extreme point of $\cX$, all points $x_1,\ldots,x_{T+1}$ are vertices of $R_{\cX} \cdot \Delta_n$. By \cref{alg:generic-lmo-lmo}, the output satisfies
    \begin{align*}
        \bar x_T\in\conv\{x_1,\ldots,x_{T+1}\}.
    \end{align*}
    Therefore, there exist distinct indices $i_1,\ldots,i_{k_T}\in[n]$ with $k_T\leq T+1$ such that
    \begin{align*}
        \bar x_T\in R_{\cX} \cdot \conv \{e_{i_1},\ldots,e_{i_{k_T}}\}.
    \end{align*}
    Consequently, we have
    \begin{align*}
        \|\bar x_T\|_2
        \geq
        \min\left\{\|x\|_2:x\in R_{\cX} \cdot \conv\{e_{i_1},\ldots,e_{i_{k_T}}\}\right\} 
        =
        \frac{R_{\cX}}{k_T^{1/2}}
        \geq
        \frac{R_{\cX}}{(T+1)^{1/2}}.
    \end{align*}
    Similarly, since $y_1$ is an extreme point of $\cY$ and each dual LMO call returns an extreme point of $\cY$, all points $y_1,\ldots,y_{T+1}$ are vertices of $R_{\cY} \cdot \Delta_m$, we have
    \begin{align*}
        \|\bar y_T\|_2
        \geq
        \frac{R_{\cY}}{(T+1)^{1/2}}.
    \end{align*}
    On the other hand, over the full simplices,
    \begin{align*}
        \min_{x\in\cX}\|x\|_2=\frac{R_{\cX}}{n^{1/2}},
        \qquad
        \min_{y\in\cY}\|y\|_2=\frac{R_{\cY}}{m^{1/2}}.
    \end{align*}
    Therefore,
    \begin{align*}
        \max_{y\in\cY}f(\bar x_T,y)-\min_{x\in\cX}f(x,\bar y_T) 
        &=
        G_\x\|\bar x_T\|_2
        -G_\y\min_{y\in\cY}\|y\|_2
        -G_\x\min_{x\in\cX}\|x\|_2
        +G_\y\|\bar y_T\|_2 \\
        &\geq
        \frac{G_\x R_{\cX}}{(T+1)^{1/2}}
        -\frac{G_\x R_{\cX}}{n^{1/2}}
        +
        \frac{G_\y R_{\cY}}{(T+1)^{1/2}}
        -\frac{G_\y R_{\cY}}{m^{1/2}}.
    \end{align*}
    Since $n,m\geq 4(T+1)$, we have
    \begin{align*}
        \frac{1}{n^{1/2}}\leq \frac{1}{2(T+1)^{1/2}},
        \qquad
        \frac{1}{m^{1/2}}\leq \frac{1}{2(T+1)^{1/2}}.
    \end{align*}
    Combining the last two displays gives the desired bound.
\end{proof}

\subsection{Proofs of \texorpdfstring{Theorem~\protect\ref{thm:lower-bound-lmo-po}}{Theorem~3}}

For the convenience, we repeat the statement of~\cref{thm:lower-bound-lmo-po}.

\begin{theorem} [Restatement of Theorem~\ref{thm:lower-bound-lmo-po}]
\label{thm:lower-bound-lmo-po-repeat}
    Let $G_\x>0,G_\y>0, R_{\cX} >0, R_{\cY}>0$ be fixed. Then, for any generic LMO-PO method of the form \cref{alg:generic-lmo-po} running for $T \geq 1$ iterations, there exist convex compact subsets $\cX \subseteq \R^n, \cY \subseteq \R^m$ with radii $R_{\cX}$ and $R_{\cY}$, respectively, and a function $f \in \cF_{G_\x,G_\y,\|\cdot\|_2}^0(\cX,\cY)$ with $T \leq  \min\{m,n\}/4-1$ such that
    \begin{align*}
        \max_{y \in \cY} f(\bar{x}_T,y)-\min_{x \in \cX} f(x,\bar{y}_T)
        \geq
        \frac{G_\x R_{\cX}}{2(T+1)^{1/2}}
        +
        \frac{G_\y R_{\cY}}{2(T+1)^{1/2}}.
    \end{align*}
\end{theorem}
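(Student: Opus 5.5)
The plan is to reuse the separable construction from the proof of \cref{thm:lower-bound-lmo-lmo-repeat} and change only the dual block. Because $f$ is separable, $f(x,y)=G_\x\|x\|_2-h(y)$, the gap splits exactly:
\begin{align*}
    \max_{y\in\cY}f(\bar x_T,y)-\min_{x\in\cX}f(x,\bar y_T)
    =\Bigl(G_\x\|\bar x_T\|_2-\min_{x\in\cX}G_\x\|x\|_2\Bigr)+\Bigl(h(\bar y_T)-\min_{y\in\cY}h(y)\Bigr).
\end{align*}
The two brackets can then be bounded separately. The primal bracket is treated exactly as before. Take $\cX=R_{\cX}\cdot\Delta_n$ with $n\geq 4(T+1)$, start at a vertex, and use an extreme-point tie-breaking rule. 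Then $\bar x_T$ lies in the convex hull of at most $T+1$ scaled vertices, so $\|\bar x_T\|_2\geq R_{\cX}/(T+1)^{1/2}$, while $\min_{\cX}\|x\|_2=R_{\cX}/n^{1/2}$. This gives the term $G_\x R_{\cX}/(2(T+1)^{1/2})$. The primal LMO directions $p_t$ play no role here.

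The dual bracket is different: $y_{T+1}=A_T(\cdots)$ is an arbitrary map of the oracle history, so the vertex-counting argument is not available. The plan is a Nemirovski--Yudin/Nesterov-type resisting-oracle construction for the concave part. Take $\cY=R_{\cY}\cdot\Delta_m$ with $m\geq 4(T+1)$ and $h(y)=G_\y\max_{i\in S}\,\bigl(y_i-\tfrac{1}{|S|}\textstyle\sum_{j\in S}y_j\bigr)$-type, or more simply $h(y)=G_\y\|P_S y\|_2$ for a hidden coordinate set $S$. This $h$ is convex, $G_\y$-Lipschitz, and satisfies $\min_{\cY}h\leq G_\y R_{\cY}/|S|^{1/2}$.

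The hidden set $S$ is fixed adaptively against the deterministic method. At each query the oracle reveals subgradient and value information that depends only on the coordinates already queried. After observing the queries $y_1,\dots,y_T$ and the resulting output $\bar y_T$, the adversary commits to $S$ of size roughly $m/2$ so as to make $h(\bar y_T)$ large, for instance by placing the coordinates where $\bar y_T$ has the most mass into $S$. The consistency requirement is that every answer previously returned must be a valid value/subgradient of the final $h$. The target is $h(\bar y_T)\geq G_\y R_{\cY}/(T+1)^{1/2}$, which, combined with the $m^{-1/2}$ bound on the minimum, yields the second term $G_\y R_{\cY}/(2(T+1)^{1/2})$.

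I expect the main obstacle to be this consistency requirement combined with the freedom of an arbitrary map $A_t$. No span assumption controls where $\bar y_T$ lands, so the bound $h(\bar y_T)\gtrsim R_{\cY}/(T+1)^{1/2}$ must come purely from the information budget. After $T$ oracle calls, only $O(T)$ coordinates of $S$ have been revealed, and $\bar y_T$ cannot concentrate its mass outside $S$ on the unrevealed coordinates. My first attempt would be Nesterov's maximum-type function $G_\y\max_{i\leq k}y_{\sigma(i)}$ with a permutation $\sigma$ chosen lazily: each query is answered with the subgradient $e_{\sigma(i)}$ for the first index not yet pinned, and $\sigma$ on the unrevealed indices is fixed only at the end. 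The point of choosing $\sigma$ last is to force $\bar y_T$ to have a coordinate of size $\geq R_{\cY}/(T+1)$ inside $S$, and then to convert this to the $\ell_2$ scale via the $k^{-1/2}$ minimum. If the simplex geometry makes this conversion lose the square root, I would switch $\cY$ to a Euclidean ball of radius $R_{\cY}/2$ and repeat the argument.
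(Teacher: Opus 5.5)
\textbf{Verdict: there is a genuine gap in the dual half.} Your separable decomposition and your treatment of the primal bracket are exactly the paper's argument. The dual bracket is the only new ingredient relative to the LMO-LMO bound, and it is not established; the concrete constructions you propose for it fail.

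\textbf{The simplex constructions do not work.} Take $\cY=R_{\cY}\Delta_m$ and $h(y)=G_\y\|P_S y\|_2$.
\begin{itemize}
\item $\min_{\cY}h=0$, since you can put all mass off $S$.
\item A single query at a point with full support, e.g.\ the barycenter, returns the gradient $G_\y P_S y/\|P_S y\|_2$, whose support is exactly $S$. The method can then output a point supported off $S$, and the dual gap is zero. So your premise that answers ``depend only on the coordinates already queried'' is false for dense queries.
\end{itemize}
The unsigned function $G_\y\max_i y_{\sigma(i)}$ fails the same way. Its minimum over the simplex is $0$, and outputting the barycenter gives $h(\bar y_T)\leq G_\y R_{\cY}/m$. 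More fundamentally, a signed maximum $\max_{i\leq k}a_iy_{\sigma(i)}$ has minimum at least $-R_{\cY}/k$ over $R_{\cY}\Delta_m$, versus $-R_{\cY}/k^{1/2}$ over the ball. So on the simplex the square root is lost. Switching to the Euclidean ball is therefore necessary, not a fallback, and the paper takes $\cY=R_{\cY}\cdot\mathbb{B}_2^m$ from the start. Use radius $R_{\cY}$: with radius $R_{\cY}/2$ you lose a factor of two in the dual term.

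\textbf{Even on the ball, the mechanism that defeats an arbitrary map $A_t$ is missing.} Answering each query with $e_{\sigma(i)}$ for the first unpinned index is Nesterov's span-based resisting oracle, which you yourself note is unavailable here. Also, the mechanism you describe is not what produces the bound: you try to force $\bar y_T$ to carry a coordinate of size $R_{\cY}/(T+1)$ in a hidden set and then convert to the $\ell_2$ scale.

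The paper's construction works as follows.
\begin{itemize}
\item After each query $y_k$, including the output $y_{T+1}$, choose $\sigma(k)$ as an unused coordinate maximizing $|(y_k)_j|$.
\item Choose the sign $a_k$ so that $a_k(y_k)_{\sigma(k)}=|(y_k)_{\sigma(k)}|$.
\item Set $h_k(y)=G_\y\max_{i\leq k}\{a_iy_{\sigma(i)}-2(i-1)\delta\}$.
\end{itemize}
The decreasing offsets make every later piece dominated by the $s$-th piece on the $\delta$-ball around $y_s$. Hence $h_{T+1}=h_s$ there, and every previously revealed value and subdifferential remains valid for the final function.

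The gap then comes from the minimum being negative, not from $h(\bar y_T)$ being large:
\begin{itemize}
\item The selection rule applied to $y_{T+1}$ gives $h_{T+1}(y_{T+1})\geq -2G_\y T\delta$.
\item The signs give $\min_{\|y\|_2\leq R_{\cY}}h_{T+1}\leq -G_\y R_{\cY}/(T+1)^{1/2}$.
\item Taking $\delta=R_{\cY}/(4T(T+1)^{1/2})$ yields the term $G_\y R_{\cY}/(2(T+1)^{1/2})$.
\end{itemize}
Without these three ingredients (adaptive largest-coordinate selection, sign matching, and decreasing offsets), your dual step states the goal rather than proving it.
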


\begin{proof}[Proof of \cref{thm:lower-bound-lmo-po-repeat}]
    Fix $T\geq 1$, and choose dimensions $n,m$ such that $n,m\geq 4(T+1)$. Let
    \begin{align*}
        \cX:=R_{\cX} \cdot \Delta_n.
    \end{align*}
    The adversary initializes the primal variable at the extreme point $x_1=R_{\cX} \cdot e_1$ and fixes an LMO tie-breaking rule that always returns an extreme-point minimizer. 
    For the dual variable, we invoke the classical nonsmooth first-order lower bound over the Euclidean ball. Namely, for any adaptive first-order method making $T$ oracle queries and returning $\bar y_T$, there exist a compact convex set $\cY:=R_{\cY} \cdot \mathbb{B}_2^m$ and a convex $G_\y$-Lipschitz function $h_{T+1}:\cY\to\R$ such that
    \begin{align}
    \label{eq:po-lower-bound-y}
        h_{T+1}(\bar y_T)-\min_{y\in\cY} h_{T+1}(y)
        \geq
        \frac{G_\y R_{\cY}}{2(T+1)^{1/2}}.
    \end{align}
    This is the standard nonsmooth lower-bound construction of \citet{nesterov2013introductory}. For completeness, we repeat the construction here.
    Set
    \begin{align*}
        \delta:=\frac{R_{\cY}}{4T(T+1)^{1/2}}.
    \end{align*}
    We inductively construct signs $a_1,\ldots,a_{T+1}\in\{\pm 1\}$ and distinct coordinates $\sigma(1),\ldots,\sigma(T+1)\in \{1, \dots, m\}$. After $y_k$ has been determined, choose
    \begin{align*}
        \sigma(k)\in\argmax_{j\in[m]\setminus\{\sigma(1),\ldots,\sigma(k-1)\}} |(y_k)_j|,
    \end{align*}
    and choose $a_k\in\{\pm 1\}$ so that
    \begin{align*}
        a_k(y_k)_{\sigma(k)}=|(y_k)_{\sigma(k)}|.
    \end{align*}
    For $k=1,\ldots,T+1$, define
    \begin{align*}
        h_k(y):=
        G_\y\max_{1\leq i\leq k}
        \left\{
            a_i y_{\sigma(i)}-2(i-1)\delta
        \right\}.
    \end{align*}
    We claim that the final function $h_{T+1}$ is consistent with all information revealed to the algorithm during the first $T$ iterations. To see this, fix $1\leq s<k\leq T+1$ and any $y\in\cY$ such that $\|y-y_s\|_2\leq\delta$. For every $i>s$, the coordinate $\sigma(i)$ was still unused when $\sigma(s)$ was chosen. Hence
    \begin{align*}
        |(y_s)_{\sigma(s)}|\geq |(y_s)_{\sigma(i)}|\geq a_i(y_s)_{\sigma(i)}.
    \end{align*}
    Therefore,
    \begin{align*}
        a_i y_{\sigma(i)}-2(i-1)\delta
        &\leq a_i(y_s)_{\sigma(i)}+\delta-2(i-1)\delta \\
        &\leq |(y_s)_{\sigma(s)}|+\delta-2s\delta \\
        &= |(y_s)_{\sigma(s)}|-(2s-1)\delta \\
        &\leq a_s y_{\sigma(s)}-2(s-1)\delta.
    \end{align*}
    Thus every affine piece added after step $s$ is dominated on the ball $\{y:\|y-y_s\|_2\leq\delta\}$ by the $s$-th affine piece, which already appears in $h_s$. Consequently,
    \begin{align*}
        h_k(y)=h_s(y),
        \qquad
        \forall y\in\cY \text{ such that } \|y-y_s\|_2\leq\delta.
    \end{align*}
    In particular, the values and subdifferentials observed at each previous query $y_s$, $s\leq T$, are identical for $h_s$ and for the final function $h_{T+1}$. 
    The function $h_{T+1}$ is convex and $G_\y$-Lipschitz on $\cY$, since it is the maximum of affine functions whose slopes have Euclidean norm $G_\y$. Moreover,
    \begin{align*}
        \min_{y\in\cY}h_{T+1}(y)
        &\leq
        G_\y\min_{\|y\|_2\leq R_{\cY}}
        \max_{1\leq i\leq T+1}
        a_i y_{\sigma(i)} 
        =
        -\frac{G_\y R_{\cY}}{(T+1)^{1/2}}.
    \end{align*}
    On the other hand, since $\bar y_T=y_{T+1}$ in \cref{alg:generic-lmo-po}, we have
    \begin{align*}
        h_{T+1}(y_{T+1})
        \geq
        G_\y\left(a_{T+1}(y_{T+1})_{\sigma(T+1)}-2T\delta\right) 
        =
        G_\y\left(|(y_{T+1})_{\sigma(T+1)}|-2T\delta\right) 
        \geq
        -2G_\y T\delta.
    \end{align*}
    Therefore, by the choice of $\delta$,
    \begin{align*}
        h(\bar y_T)-\min_{y\in\cY}h(y)
        \geq
        \frac{G_\y R_{\cY}}{(T+1)^{1/2}}-2G_\y T\delta
        =
        \frac{G_\y R_{\cY}}{2(T+1)^{1/2}}.
    \end{align*}
    
    Define now the saddle payoff
    \begin{align*}
        f(x,y):=G_\x\|x\|_2-h_{T+1}(y).
    \end{align*}
    Since $x_1$ is an extreme point of $\cX$ and each primal LMO call returns an extreme point of $\cX$, all points $x_1,\ldots,x_{T+1}$ are vertices of $R_{\cX}\Delta_n$. By \cref{alg:generic-lmo-po}, the output satisfies
    \begin{align*}
        \bar x_T\in R_{\cX} \cdot \conv\{e_{i_1},\ldots,e_{i_{k_T}}\}.
    \end{align*}
    Consequently, we have
    \begin{align*}
        \|\bar x_T\|_2
        \geq
        \min\left\{\|x\|_2:x\in R_{\cX} \cdot \conv\{e_{i_1},\ldots,e_{i_{k_T}}\}\right\} 
        =
        \frac{R_{\cX}}{k_T^{1/2}}
        \geq
        \frac{R_{\cX}}{(T+1)^{1/2}}.
    \end{align*}
    Since $n\geq 4(T+1)$, we may conclude that
    \begin{align}
    \label{eq:lmo-po-primal-lower}
        G_\x\|\bar x_T\|_2-G_\x\min_{x\in\cX}\|x\|_2
        \geq
        \frac{G_\x R_{\cX}}{2(T+1)^{1/2}}.
    \end{align}
    Finally, by the separable form of $f$,
    \begin{align*}
        \max_{y\in\cY}f(\bar x_T,y)-\min_{x\in\cX}f(x,\bar y_T) 
        &=
        G_\x\|\bar x_T\|_2-\min_{y\in\cY}h_{T+1}(y)
        -
        \left(
            G_\x\min_{x\in\cX}\|x\|_2-h_{T+1}(\bar y_T)
        \right) \\
        &=
        G_\x\|\bar x_T\|_2-G_\x\min_{x\in\cX}\|x\|_2
        +
        h_{T+1}(\bar y_T)-\min_{y\in\cY}h_{T+1}(y).
    \end{align*}
    Combining this identity with~\eqref{eq:po-lower-bound-y} and~\eqref{eq:lmo-po-primal-lower} yields the desired bound.
\end{proof}

\section{Implementation Details}
\label{app:numerics}
This appendix collects the implementation details used in \cref{sec:numerical}. In all experiments, the suffix \texttt{Deterministic} means that the method uses the full subgradient oracle, while the suffix \texttt{Stochastic} means that the method uses an unbiased stochastic estimate of the same oracle. The algorithm names \texttt{LMO-LMO}, \texttt{LMO-PO}, \texttt{PO-LMO}, and \texttt{PO-PO} refer to the oracle used for the primal and dual domains, respectively. They correspond to \cref{alg:lmo-lmo,alg:lmo-po,alg:po-lmo,alg:po-po}, respectively.
For LMO-based updates, we use simple enclosing sets $\myoverline{\cX}\supseteq\cX$ and $\myoverline{\cY}\supseteq\cY$. These sets are chosen so that projection onto them is computationally cheap, while the actual feasible updates over $\cX$ and $\cY$ are performed using LMOs whenever the corresponding method uses an LMO. 
For the robust classification problem, we rely on the following set $\myoverline{\cY}$ for the inexpensive projection step.

\begin{lemma}
\label{lem:proj-oracle}
    Given any $c\in\R^n$, let $[c]_+=\max\{c,0\}$ componentwise. For
    \begin{align*}
        \myoverline{\cY}:=\{y\in\R^n:y\geq 0,\|y\|_2\leq 1\},
    \end{align*}
    it holds that
    \begin{align*}
        \PO_{\myoverline{\cY}}(c)=\frac{[c]_+}{\max\{\|[c]_+\|_2,1\}}.
    \end{align*}
\end{lemma}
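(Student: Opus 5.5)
The plan is to treat the projection as a separable-plus-radial problem in two stages: first project onto the nonnegative orthant, then onto the Euclidean unit ball, and then show that composing these two projections returns the projection onto their intersection $\myoverline{\cY}$. Concretely, I set $p := [c]_+$ and $y^\star := p/\max\{\|p\|_2,1\}$. Clearly $y^\star \geq 0$ and $\|y^\star\|_2 \leq 1$, so $y^\star \in \myoverline{\cY}$. It then suffices to verify the variational characterization of the Euclidean projection onto a closed convex set: $(c - y^\star)^\top (y - y^\star) \leq 0$ for all $y \in \myoverline{\cY}$.

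For the verification, I decompose $c - y^\star = (c - p) + (p - y^\star)$ and handle the two pieces separately. The first piece is $c - p = -[-c]_+ = \min\{c,0\}$, which is componentwise nonpositive and vanishes wherever $p$ is positive. Since $y^\star$ is a nonnegative multiple of $p$, this gives $(c-p)^\top y^\star = 0$. Moreover $(c-p)^\top y \leq 0$ for every $y \geq 0$. Hence $(c-p)^\top(y - y^\star) \leq 0$.

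For the second piece there are two cases. If $\|p\|_2 \leq 1$, then $y^\star = p$ and the term is zero. Otherwise $p - y^\star = (1 - 1/\|p\|_2)\,p = (\|p\|_2 - 1)\,y^\star$, which is a nonnegative multiple of $y^\star$ with $\|y^\star\|_2 = 1$. In that case
\begin{align*}
(p-y^\star)^\top(y-y^\star) = (\|p\|_2-1)\bigl(y^\top y^\star - 1\bigr) \leq 0
\end{align*}
by Cauchy--Schwarz and $\|y\|_2 \leq 1$. Adding the two pieces yields the variational inequality, and uniqueness of the Euclidean projection gives $\PO_{\myoverline{\cY}}(c) = y^\star$.

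The only mildly delicate step is the orthogonality $(c-p)^\top y^\star = 0$, which relies on $y^\star$ being supported where $c > 0$. This holds precisely because the rescaling preserves the support of $p$. I expect no substantive obstacle beyond checking this support argument carefully.
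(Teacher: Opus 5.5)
Your proof is correct. It starts from the same picture as the paper (clip to the orthant, then rescale into the unit ball), but it certifies the answer differently.

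The paper simply asserts that after clipping ``the problem reduces to projecting $[c]_+$ onto the Euclidean unit ball,'' and then reads off the ball projection in the two cases. That reduction is not automatic: composing projections onto two convex sets does not in general give the projection onto their intersection. It can be justified through the identity $\|y-c\|_2^2=\|y-[c]_+\|_2^2-2\sum_{i:\,c_i<0}c_iy_i+\|c-[c]_+\|_2^2$. The middle term is nonnegative for $y\geq 0$ and vanishes when $y_i=0$ for every $i$ with $c_i<0$. One also needs the fact that the ball projection of $[c]_+$ is itself nonnegative.

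You instead check the obtuse-angle characterization $(c-y^\star)^\top(y-y^\star)\leq 0$ directly. Your split $c-y^\star=(c-p)+(p-y^\star)$ makes exactly these two ingredients explicit. The relations $(c-p)^\top y^\star=0$ and $(c-p)^\top y\leq 0$ on the orthant play the role of the clipping step. The identity $p-y^\star=(\|p\|_2-1)\,y^\star$ plays the role of the ball projection.

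The paper's argument is shorter and more intuitive. Yours is self-contained and fully rigorous, in effect supplying the justification the paper omits, and it handles the degenerate case $[c]_+=0$ with no extra work.
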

\begin{proof}[Proof of \cref{lem:proj-oracle}]
    Computing $\PO_{\myoverline{\cY}}(c)$ is equivalent to solving
    \begin{align*}
        \min_{y\in\R^n} \{ \|y-c\|_2^2 : y \geq 0,\ \|y\|_2 \leq 1 \}.
    \end{align*}
    Since the feasible set is the intersection of the nonnegative orthant and the Euclidean unit ball, the negative coordinates of $c$ are first clipped to zero. Thus the problem reduces to projecting $[c]_+$ onto the Euclidean unit ball. If $\|[c]_+\|_2\leq 1$, then $[c]_+$ is feasible and is the projection. If $\|[c]_+\|_2>1$, the Euclidean projection onto the unit ball is $[c]_+/\|[c]_+\|_2$. Combining the two cases gives
    \begin{align*}
        \PO_{\myoverline{\cY}}(c)=\frac{[c]_+}{\max\{\|[c]_+\|_2,1\}}.
    \end{align*}
    This proves the claim.
\end{proof}

The projection and LMO formulas used in the experiments are summarized in \cref{table:oracles_summary}, where the input is the vector $c$ or the matrix $C$ depending on the context.

\begin{table}[!t]
    \centering
    \caption{Summary of LMOs and POs.}
    \label{table:oracles_summary}
    \small
    \renewcommand{\arraystretch}{1.5}
    \begin{minipage}{0.875\textwidth}
    \begin{tabularx}{\textwidth}{@{} l l X @{}}
        \toprule
        \textbf{Set } & \textbf{LMO} & \textbf{PO}  \\
        \midrule
        \textbf{Frobenius Ball} & $-r \frac{C}{\|C\|_2}$ & If $\|C\|_2 > r$, $Y = r \frac{C}{\|C\|_{\mathrm{F}}}$. \\
        ($\|Y\|_{\mathrm{F}} \le r$) & & Else $Y = C$. \\
        \addlinespace
        \textbf{Nuclear Ball} & $-r(u_1 v_1^T)$ & 1. Full SVD: $C = U\Sigma V^T$. \\
        ($\|Y\|_* \le r$) & (Top singular triplet) & 2. Project $\text{diag}(\Sigma)$ onto $r$-Simplex. \\
        & & 3. Reconstruct: $Y = U \Sigma_{\text{proj}} V^T$. \\
        \addlinespace
        \textbf{Nonnegative Ball} & --- & 1. Nonnegative clip: $[c]_+ = \max(0, c)$ \\
        ($\|y\|_2 \le r, y \ge 0$) & & 2. If $\|[c]_+\|_2 > r$, $Y = r \frac{[c]_+}{\|[c]_+\|_2}$. Else $y = [c]_+$. \\
        \addlinespace
        \textbf{$r$-Simplex}        & $-r\text{sign}(c_i) \cdot e_i$          & 1. Sort $C$ in descending order. \\
        ($\sum y_i = r, y \ge 0$) & where $i = \text{argmin}(c)$ & 2. Find $\tau$ s.t. $\sum \max(c_i - \tau, 0) = r$. \\
        & & 3. Output: $y_i = \max(c_i - \tau, 0)$. \\
        \bottomrule
    \end{tabularx}
    \end{minipage}
\end{table}

\subsection{Matrix Completion with Spectral Norm Fit}
\citet{juditsky2016solving} considered the spectral norm fit problem
\begin{equation} \label{eq:MC}
    \min_{\|X\|_* \leq 1} \|\cA(X)-B\|_{\text{op}},
\end{equation}
where $X \in \R^{n \times p}$ is the decision variable, $B \in \R^{m \times q}$, and $\cA:\R^{n \times p}\to\R^{m \times q}$ is a linear map. Problem~\eqref{eq:MC} admits the saddle-point reformulation
\begin{align*}
    \min_{\|X\|_* \leq 1} ~ \max_{\|Y\|_* \leq 1} ~
    \Tr\left[(\cA(X)-B)^\top Y\right].
\end{align*}

For the matrix completion with spectral norm fit problem, we generate a synthetic instance as suggested by \citep{juditsky2016solving} as follows: We set $p=n$ and $q=m=2n$ with $n = 200$ and the map $\cA$ given by
$$\cA(X)= \frac{1}{k} \sum_{i =1}^{k} L_i X R_i^\top,$$
where $L_i \in \R^{m \times n}$ and $R_i \in \R^{q \times p}$ with $L_i = \Tilde{L}_i/\| \Tilde{L}_i \|_{\text{op}}$ and $R_i = \Tilde{R}_i/\| \Tilde{R}_i \|_{\text{op}}$ and $\Tilde{L}_i, \Tilde{R}_i$ are randomly generated by standard normal distributions. In our experiment, we set $k = 2$. The construction of the observation matrix $B$ begins with the generation of a ground-truth signal $V_{\text{true}} \in \mathbb{R}^{n \times n}$. To ensure this matrix is low-rank, we first define the rank as $r = \lfloor \sqrt{n} \rfloor$. We then sample two latent factor matrices $U, V \in \mathbb{R}^{n \times r}$ from a standard normal distribution and form the base signal matrix through the outer product $V_{\text{base}} = UV^\top$.\\

To construct $B$, we first compute $V_{\text{true}} = V_{\text{base}} / \|V_{\text{base}}\|_*$, which ensures that the resulting ground-truth matrix has a unit nuclear norm. This normalized signal is then passed through a linear observation operator $\mathcal{A}(\cdot)$ to produce the clean measurement matrix $B_{\text{clean}} = \mathcal{A}(V_{\text{true}}) \in \mathbb{R}^{m \times m}$. We then generate a raw noise matrix $E_{\text{raw}} \in \mathbb{R}^{m \times m}$ where each entry is sampled from standard normal distribution. This matrix is then rescaled by its spectral norm and multiplied by a specified noise level $\delta$ to form the error matrix $E = \delta \cdot (E_{\text{raw}} / \|E_{\text{raw}}\|_{\text{op}})$. The observed matrix $B$ is then obtained by the summation of the clean measurements and this spectrally-scaled noise, $B = B_{\text{clean}} + E$.\\

For initialization, we start with $X_0 = \PO_{\cX}( \Tilde{X}_0)$ and $Y_0 = \PO_{\cY}(\Tilde{Y}_0)$ where entries of $\Tilde{X}_0$ and $\Tilde{Y}_0$ are randomly generated by uniform distribution over $[-1000,1000]$. For \cref{alg:lmo-lmo}, \cref{alg:lmo-po}, \cref{alg:po-lmo} and \cref{alg:po-po}, the stepsizes are given by 
$$\alpha_t =\beta_t= \left(\frac{1}{t+1}\right)^{-1/2} \quad \& \quad \eta_t = \tau_t =  \frac{0.1}{(t+1)^{1/2}} \quad \& \quad  \gamma_t=\rho_t =\left(\frac{1}{t+1}\right)^{1/2},$$
which align with \cref{theorem:convergence} and \cref{theorem:convergence-po-po-2}.
For $\cX = \{ X \in \R^{n \times p} \mid \| X \|_* \leq 1 \}$, we choose the covering $\overline{\cX} = \{ X \in \R^{n \times p} \mid \| X \|_{\mathrm{F}} \leq 1 \}$. For $\cY = \{ X \in \R^{m \times q} \mid \| Y \|_* \leq 1 \}$, we choose the covering $\overline{\cY} = \{ Y \in \R^{m \times q} \mid \| Y \|_{\mathrm{F}} \leq 1 \}$.\\

Moreover, since the payoff function in this experiment is smooth, we also implement LMO-LMO, LMO-PO and PO-LMO methods \citep{giang2026projection},  Alternating Gradient Projection (AGP) method \citep{xu2023unified}, ExtraGradient (EG) method \citep{korpelevich1976extragradient}, Optimistic Gradient Descent-Ascent (OGDA) method \citep{popov1980modification}. To describe the parameter choices, we follow the notation adopted in the corresponding paper. For LMO-LMO method \citep[Algorithm 1]{giang2026projection}, we choose $\tau_t = (t+1)^{-1}$ and $\beta_t = (t+1)^{-1/5}$ according to \citep[Theorem 2.2]{giang2026projection}. For LMO-PO method \citep[Algorithm 2]{giang2026projection}, we choose $\tau_t = (t+1)^{-1}$ and $\beta_t = (t+1)^{-1/3}$ according to \citep[Theorem 3.2]{giang2026projection}. For PO-LMO method \citep[Algorithm 3]{giang2026projection}, we choose $\tau_t = (t+1)^{-2/3}$ and $\beta_t = (t+1)^{-1/3}$ according to \citep[Theorem 4.2]{giang2026projection}. For AGP method \citep{xu2023unified}, we implement both parameter choices in \citep[Theorem 3.2 ]{xu2023unified}, i.e., $b_t = 0, c_t = (t+1)^{-1/4}$, $\beta_t =(t+1)^{1/2},\gamma_t = 10$ and in \citep[Theorem 4.2]{xu2023unified}, i.e., $b_t = (t+1)^{-1/4}, c_t = 0, \beta_t = 10, \gamma_k = (t+1)^{1/2}$ which correspond to parameter choices that ensure convergence in nonconvex-concave and convex-nonconcave settings, respectively. For EG method \citep{korpelevich1976extragradient}, we choose the stepsize to be $\frac{1}{2L}$ and for OGDA method \citep{popov1980modification}, we choose the stepsize to be $\frac{1}{2.05L}$ where $L = \frac{1}{k}\sum_{i = 1}^k \|L_i\|_{\text{op}}\|R_i\|_{\text{op}}$ is the smoothness constant of the payoff function.

Figure~\ref{fig:FullExperiment} provides the extended comparison on the smooth matrix completion instance. As expected for this bilinear problem, the classical smooth methods, especially OGDA and EG, perform very well when equipped with tuned stepsizes. Among the proposed nonsmoothing methods, \texttt{LMO-LMO (Det)} and \texttt{LMO-LMO (Sto)} are the strongest variants, and they substantially outperform the smoothing-based projection-free methods. The smoothing-based LMO variants decrease much more slowly and remain close to their initial saddle-gap values over the plotted time horizon, reflecting the cost of introducing and tuning a smoothing parameter. The AGP variants also improve more slowly than EG/OGDA and the proposed LMO-based methods. Overall, the figure illustrates that, even on a smooth problem where methods designed specifically for smooth saddle-point problems are highly competitive, the proposed subgradient-based LMO methods remain effective and avoid the deterioration observed for smoothing-based projection-free schemes.

\begin{figure*}[!b]
    \centering

    \begin{subfigure}[b]{1\textwidth}
        \centering
        \includegraphics[width =0.8\linewidth]{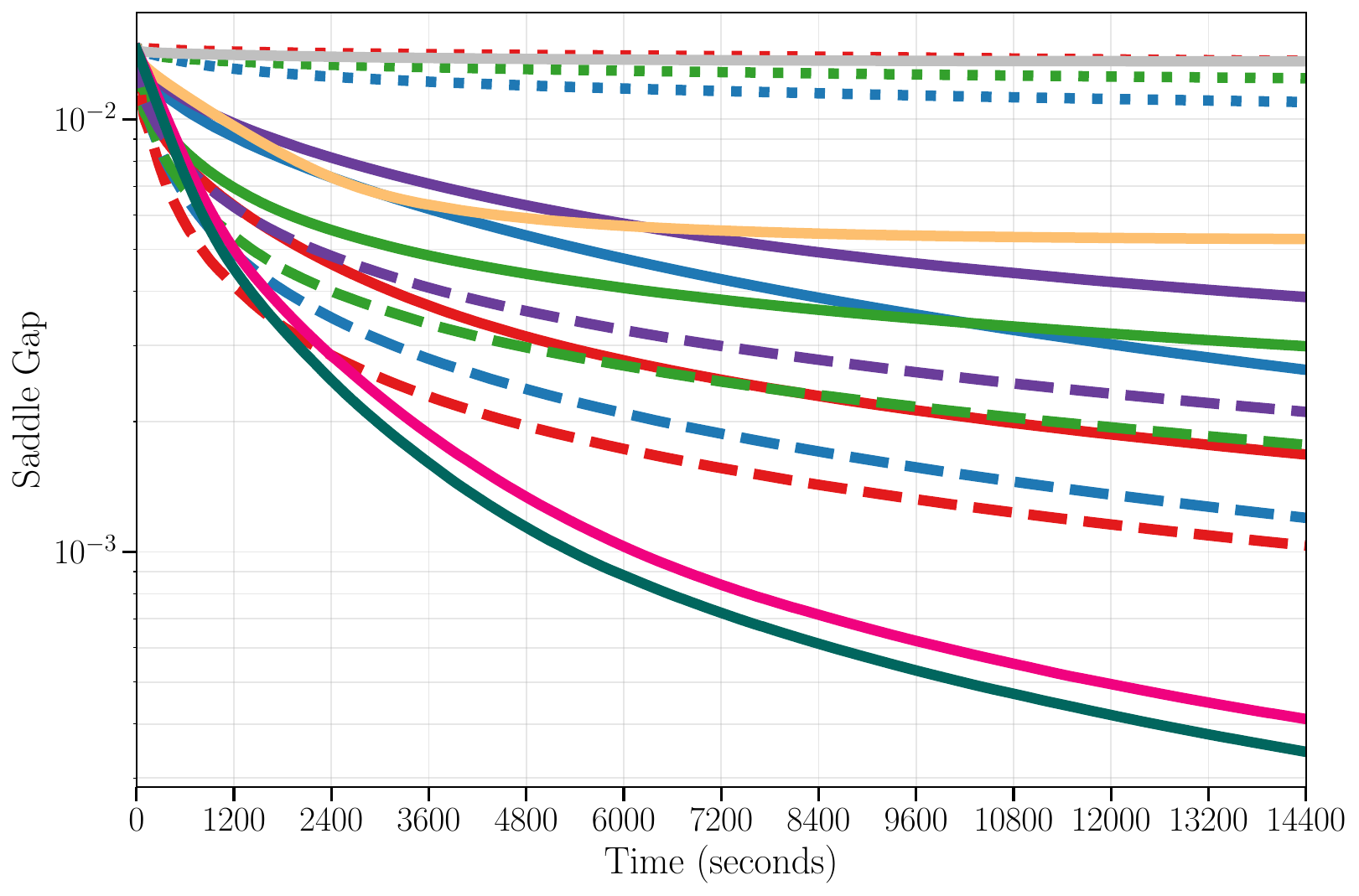}
        \label{subfig:fullmatrix}
    \end{subfigure}

    \vspace{0.2em} 
    \centering
    \begin{subfigure}[b]{1\textwidth}
        \centering
        \includegraphics[width=0.65\linewidth]{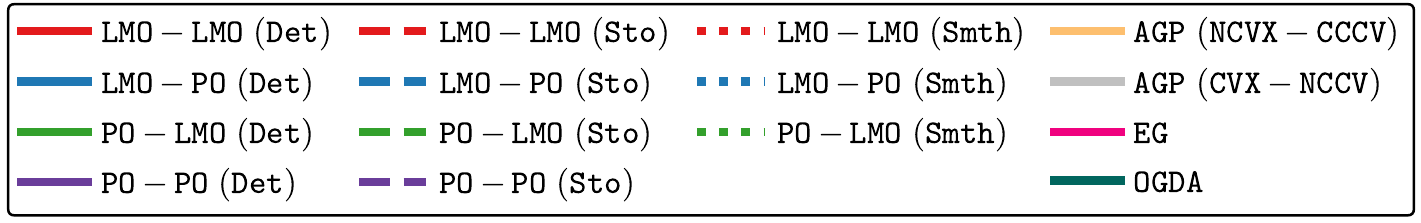}
    \end{subfigure}
    \caption{Additional comparison on the matrix completion problem with spectral norm fit. %Since this problem is smooth, we include smoothing-based projection-free methods, a smoothing-based projection method, extra-gradient, and optimistic gradient descent-ascent.
    }
    \label{fig:FullExperiment}
\end{figure*}

\subsection{Robust Multiclass Classification with Hinge Loss}
We use the robust multiclass classification model described in \cref{sec:numerical}. In this experiment, we set
\begin{align*}
    r=10,
    \qquad
    \lambda=\frac{\lambda'}{n^2},
    \qquad
    \lambda'=1.
\end{align*}
The primal and dual feasible sets are
\begin{align*}
    \cX=\{\Theta\in\R^{k\times d}:\|\Theta\|_*\leq r\}
    \quad \text{and} \quad
    \cY=\left\{y\in\R^n:y\geq 0,\sum_{i=1}^{n}y_i=1\right\}.
\end{align*}
For the LMO-based variants, we use the covering sets
\begin{align*}
    \myoverline{\cX}=\{\Theta\in\R^{k\times d}:\|\Theta\|_{\mathrm{F}}\leq r\},
    \quad \text{and} \quad
    \myoverline{\cY}=\{y\in\R^n:y\geq 0,\|y\|_2\leq 1\}.
\end{align*}
The inclusion $\cY\subseteq\myoverline{\cY}$ follows because every probability vector has Euclidean norm at most one. Projection onto $\myoverline{\cX}$ is a Frobenius-norm rescaling, while projection onto $\myoverline{\cY}$ is given in \cref{lem:proj-oracle}.
We initialize $\Theta_1=\PO_{\cX}(\widetilde\Theta_1)$ and $y_1=e_1$, where the entries of $\widetilde \Theta_1$ are sampled independently from the uniform distribution on $[-1000,1000]$. The stepsizes are
\begin{align*}
    \alpha_t=\beta_t=\left(\frac{0.01}{t+1}\right)^{-1/2},
    \qquad
    \eta_t=\tau_t=\left(\frac{10}{t+1}\right)^{1/2},
    \qquad
    \gamma_t=\rho_t=\left(\frac{0.01}{t+1}\right)^{1/2}.
\end{align*}
These choices follow the scaling in \cref{theorem:convergence,theorem:convergence-po-po-2}.
Let
\begin{align*}
    \ell_i(\Theta)
    =
    \max_{j\in[k]}
    \left\{
        \one(j\neq b_i)+\theta_j^\top a_i-\theta_{b_i}^\top a_i
    \right\}.
\end{align*}
The full primal subgradient is computed from $\frac{1}{n} \sum_{i=1}^{n}y_i \partial \ell_i(\Theta).$
The dual oracle uses the subgradient of $(-f)_\Theta(\cdot)$, namely $-\frac{1}{n}\ell(\Theta)+2\lambda n(ny-\one_n),$ where $\ell(\Theta) = (\ell_1(\Theta),\ldots,\ell_n(\Theta))^\top$. The stochastic variants replace the full sums by unbiased single-sample estimates.
For this experiment, we report the primal objective
\begin{align*}
    p(\Theta)
    :=
    \max_{y\in\cY}
    \left\{
        \frac{1}{n}\sum_{i=1}^{n}y_i\ell_i(\Theta)
        -
        \lambda\left\|ny-\one_n\right\|_2^2
    \right\}.
\end{align*}
Computing the full strong saddle gap is expensive at this scale because it requires solving both the dual maximization problem and the primal minimization problem over the nuclear-norm ball. The primal objective is substantially cheaper to compute and empirically tracks the same convergence behavior.

%\newpage
%\input{checklist.tex}

\end{document}